\date{\today}
\def\biblio{\bibliography{bibliography}\bibliographystyle{alpha}}
\newtheorem{thm}{Theorem}[section]
\newtheorem*{thm*}{Theorem}
\newtheorem{cor}[thm]{Corollary}
\newtheorem{prop}[thm]{Proposition}
\newtheorem{lem}[thm]{Lemma}
\newtheorem{convention}[thm]{Convention}
\theoremstyle{definition}
\newtheorem{defn}[thm]{Definition}
\newtheorem{ex}[thm]{Example}
\theoremstyle{remark}
\newtheorem{rem}[thm]{Remark}
\DeclareMathOperator{\Alg}{Alg}
\DeclareMathOperator{\CAlg}{CAlg}
\DeclareMathOperator{\im}{im}
\DeclareMathOperator{\Hom}{Hom}
\DeclareMathOperator{\colim}{colim}
\DeclareMathOperator{\End}{End}
\DeclareMathOperator{\Id}{Id}
\DeclareMathOperator{\Tot}{Tot}
\DeclareMathOperator{\Ind}{Ind}
\DeclareMathOperator{\fib}{fib}
\DeclareMathOperator{\Mod}{\mathrm{Mod}}
\DeclareMathOperator{\Pic}{Pic}
\DeclareMathOperator{\Sp}{Sp}
\DeclareMathOperator{\Loc}{Loc}
\DeclareMathOperator{\Cat}{Cat_{\infty}}
\DeclareMathOperator{\Catomega}{Cat_{\infty}^{\omega}}
\DeclareMathOperator{\Fun}{Fun}
\DeclareMathOperator{\Thick}{Thick}
\DeclareMathOperator{\alg}{alg}
\DeclareMathOperator{\Ch}{Ch}
\DeclareMathOperator{\tors}{tors}
\DeclareMathOperator{\comp}{comp}
\DeclareMathOperator{\Franke}{Fr}
\newcommand{\cA}{\mathcal{A}}
\newcommand{\Frnp}{\Franke_{n,p}}
\newcommand{\noloc}{\;\mathord{:}\,}
\newcommand{\N}{\mathbb{N}}
\newcommand{\Z}{\mathbb{Z}}
\newcommand{\F}{\mathbb{F}}
\newcommand{\Q}{\mathbb{Q}}
\newcommand{\E}{E_{n,p}}
\newcommand{\bE}{\mathbb{E}}
\newcommand{\cC}{\mathcal{C}}
\newcommand{\cD}{\mathcal{D}}
\newcommand{\cF}{\mathcal{F}}
\newcommand{\cR}{\mathcal{R}}
\newcommand{\cK}{\mathcal{K}}
\newcommand{\cL}{\mathcal{L}}
\newcommand{\cP}{\mathcal{P}}
\newcommand{\cS}{\mathcal{S}}
\newcommand{\lra}[1]{\overset{#1}{\longrightarrow}}
\newcommand{\doubleflat}{\flat\kern-1.0pt\flat}
\newcommand{\Prod}[1]{{\prod}_{#1}}
\newcommand{\Colim}[1]{\underset{#1}{\colim}}
\newcommand{\Spnp}{\Sp_{n,p}}
\let\c@equation\c@thm
\numberwithin{equation}{section}
\Crefname{figure}{Figure}{Figures}
\newcommand{\id}{\mathrm{id}}
\newcommand{\cI}{\mathcal{I}}
\newcommand{\cJ}{\mathcal{J}}
\newcommand{\CC}{\overline{\mathcal{C}}}
\newcommand{\Cayley}{\mathrm{\mathfrak{C}}}
\DeclareMathOperator{\Cell}{Cell}
\DeclareMathOperator{\PicCell}{PicCell}
\newcommand{\cFrnp}{\widehat{\Franke}_{n,p}}
\newcommand{\cSpnp}{\widehat{\Sp}_{n,p}}
\newcommand{\cMod}{\widehat{\Mod}}
\newcommand{\tMod}{\Mod^{\tors}}
\newcommand{\csmash}{\widehat{\otimes}}
\newcommand{\K}{K_p(n)}
\newcommand{\bP}{\mathcal{P}}
\newcommand{\fm}{\mathfrak{m}}
\newcommand{\cotimes}{\hat{\otimes}}
\newcommand{\ccC}{\widehat{\cC}}
\newcommand{\A}{A_{n,p}}
\newcommand{\algkappa}{\kappa_{\mathrm{alg},p}}
\newcommand{\Malg}{M_{\mathrm{alg}}}
\newcommand{\topkappa}{\kappa_{\mathrm{top},p}}
\newcommand{\Mtop}{M_{\mathrm{top}}}
\newcommand{\cG}{\mathcal{G}}
\newcommand{\cH}{\mathcal{H}}
\DeclareMathOperator{\cell}{cell}
\newcommand{\Mnp}{{\Sp_{n,p}^{\tors}}}
\newcommand{\tFrnp}{\Frnp^{\tors}}
\newcommand{\M}{M_{n,p}}
\newcommand{\cB}{\mathcal{B}}
\date{\today}
\begin{document}
\title{Monochromatic homotopy theory is asymptotically algebraic}

\author{Tobias Barthel}
\thanks{The first author was partially supported by the DNRF92 and the European Unions Horizon 2020 research and innovation programme under the Marie Sklodowska-Curie grant agreement No.~751794.}
\address{University of Copenhagen\\ Copenhagen, Denmark} 
\email{tbarthel@math.ku.dk}

\author{Tomer M. Schlank}
\address{The Hebrew University of Jerusalem \\ Jerusalem, Israel} 
\thanks{The second author is supported by the Alon Fellowship and ISF1588/18.}
\email{tomer.schlank@mail.huji.ac.il}

\author{Nathaniel Stapleton}
\address{University of Kentucky\\ Lexington, USA} 
\thanks{All three authors would like to thank the Isaac Newton Institute for Mathematical Sciences,
Cambridge, for support and hospitality during the programme \emph{Homotopy Harnessing Higher Structures}, where work on this paper was undertaken. This work was supported by EPSRC grant no
EP/K032208/1.}

\email{nat.j.stapleton@gmail.com}

\begin{abstract} 
In previous work, we used an $\infty$-categorical version of ultraproducts to show that, for a fixed height $n$, the symmetric monoidal $\infty$-categories of $\E$-local spectra are asymptotically algebraic in the prime $p$. In this paper, we prove the analogous result for the symmetric monoidal $\infty$-categories of $\K$-local spectra, where $\K$ is Morava $K$-theory at height $n$ and the prime $p$. This requires $\infty$-categorical tools suitable for working with compactly generated symmetric monoidal $\infty$-categories with non-compact unit. The equivalences that we produce here are compatible with the equivalences for the  $\E$-local $\infty$-categories.
\end{abstract}

\date{\today}


\maketitle

{\hypersetup{linkcolor=black}\tableofcontents}

\def\biblio{}

\section{Introduction}

Chromatic homotopy theory describes how the stable homotopy category can be built out of irreducible building blocks depending on a prime $p$ and a height $n$ called the $K(n)$-local categories. These categories have peculiar categorical properties that are not visible in the global context of the stable homotopy category. In particular, the $K(n)$-local category inherits a symmetric monoidal structure from the stable homotopy category whose invariants, such as Picard groups, have been an active area of research. When $n$ is fixed and $p$ increases, the $K(n)$-local category simplifies in various ways. For instance, the Picard groups are purely algebraic \cite{pp2} and certain spectral sequences grow sparser leading to controllable calculations and the capacity to construct basic spectra.

In \cite{ultra1}, we categorify a similar simplification for the $E_n$-local categories $\Sp_{n,p}$. This is accomplished by introducing an algebraic analogue of $\Sp_{n,p}$, called $\Franke_{n,p}$, and by constructing an equivalence of symmetric monoidal $\infty$-categories 
\begin{equation} \label{introequiv}
\Prod{\cF}^{\Pic} \Spnp \simeq \Prod{\cF}^{\Pic} \Frnp,
\end{equation}
where $\cF$ is a non-principal ultrafilter on the set of prime numbers and $\Prod{\cF}^{\Pic}$ denotes the $\Pic$-generated protoproduct of \cite[Section 3]{ultra1}. This equivalence allows one to move certain results in $\Frnp$ to $\Spnp$ for large enough primes as shown in \cite[Section 6]{ultra1}. For the purpose of applications, it is important to have a $K(n)$-local version of the equivalence above and to understand how it relates to the $E_n$-local equivalence. That is the purpose of the current paper, which may be viewed as a sequel to \cite{ultra1}.

In this paper, we build a monochromatic analogue of $\Frnp$, called $\cFrnp$, and extend the $\Pic$-generated protoproduct construction to include the $K(n)$-local categories $\cSpnp$ and the categories $\cFrnp$.
\begin{thm} \label{mainthm1}
There is an equivalence of symmetric monoidal $\infty$-categories
\[
\Prod{\cF}^{\Pic} \cSpnp \simeq \Prod{\cF}^{\Pic} \cFrnp.
\]
In particular, this induces an equivalence of Picard $\infty$-groupoids
\[
\Pic(\Prod{\cF}^{\Pic} \cSpnp) \simeq \Pic(\Prod{\cF}^{\Pic} \cFrnp).
\]
\end{thm}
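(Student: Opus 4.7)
The plan is to leverage the $E_n$-local equivalence from \cite{ultra1} and transport it to the $K(n)$-local setting by exploiting the relationship between $\cSpnp$ and the torsion subcategory $\tSpnp \subset \Spnp$. Recall that $\cSpnp$ is compactly generated, with its compact objects matching (under a monochromatization/completion equivalence) the compact torsion objects of $\Spnp$, and the unit $L_{K(n)}S^0$ is not compact.

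First, I would construct $\cFrnp$ as an algebraic analogue of $\cSpnp$ by mirroring this structure on the Franke side: realize $\cFrnp$ as the $\Ind$-completion of $(\tFrnp)^{\omega}$, the compact objects in the torsion subcategory $\tFrnp \subset \Frnp$. This yields a compactly generated symmetric monoidal $\infty$-category with non-compact unit, paralleling $\cSpnp$. Second, I would extend the $\Pic$-generated protoproduct of \cite{ultra1} to compactly generated symmetric monoidal $\infty$-categories $\cC$ with possibly non-compact unit, roughly by setting
\[
\Prod{\cF}^{\Pic}\cC := \Ind\bigl(\Prod{\cF}^{\Pic}(\cC^{\omega})\bigr),
\]
where the inner protoproduct is applied to the full subcategory of compact objects (which, while not containing the unit, does contain enough compact Picard objects to generate it). Verifying that this extended construction inherits a symmetric monoidal structure compatible with the localized smash product is a core technical step.

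Third, I would assemble the desired equivalence as a chain of symmetric monoidal equivalences
\[
\Prod{\cF}^{\Pic}\cSpnp \simeq \Ind\bigl(\Prod{\cF}^{\Pic}\bigl((\tSpnp)^{\omega}\bigr)\bigr) \simeq \Ind\bigl(\Prod{\cF}^{\Pic}\bigl((\tFrnp)^{\omega}\bigr)\bigr) \simeq \Prod{\cF}^{\Pic}\cFrnp,
\]
where the outer equivalences come from the definition of the extended protoproduct together with the identifications $\cSpnp \simeq \Ind((\tSpnp)^{\omega})$ and $\cFrnp \simeq \Ind((\tFrnp)^{\omega})$, and the middle equivalence is obtained by restricting the $E_n$-local equivalence \eqref{introequiv} to compact torsion objects and passing to protoproducts. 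The induced equivalence of Picard $\infty$-groupoids is then a formal consequence of the symmetric monoidality of the chain.

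The main obstacle is the systematic development of $\infty$-categorical tools for protoproducts of compactly generated symmetric monoidal $\infty$-categories with non-compact unit, as hinted at in the abstract. Two points deserve particular care: establishing that $\Ind$-completion and $\Pic$-generated protoproducts interact well enough to produce a symmetric monoidal structure on $\Prod{\cF}^{\Pic}\cSpnp$, despite the $K(n)$-local smash product not preserving filtered colimits in the unit variable; and ensuring that the equivalence of \cite{ultra1} restricts coherently to the subcategories of compact torsion objects in a manner that survives the protoproduct construction. Making these compatibilities symmetric monoidal, and not merely valid at the level of underlying $\infty$-categories, is where the bulk of the technical work is expected to reside.
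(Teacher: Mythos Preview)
Your proposal has a genuine gap at the middle step: you assert that the $E_n$-local equivalence \eqref{introequiv} restricts to an equivalence between the protoproducts of compact torsion objects, but you give no mechanism for verifying this. The equivalence of \cite{ultra1} is not given prime-by-prime; it is constructed via descent along $\E$ and cosimplicial formality of the module categories $\Mod_{\E^{\otimes \bullet+1}}$. To know that it carries $[L_{\E}F(n,p)]_{\cF}$-torsion objects to $[\underline{P}(E_0/I_n)]_{\cF}$-torsion objects, one must trace these objects through the descent and formality equivalences, and at the level of $\Spnp$ there is no concrete handle on the equivalence that makes this feasible.

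The paper's route avoids this by working at the module level throughout: (i) descend both sides to cosimplicial module categories $\cMod_{\E^{\otimes\bullet+1}}$ and $\cMod_{\A^{\otimes\bullet+1}}$; (ii) use local duality to identify these with torsion module categories; (iii) show (\Cref{prop:protopitorsion}, \Cref{cor:prototorsion}) that the cell-protoproduct of torsion modules agrees with the torsion subcategory of the protoproduct, where torsion is detected by $\pi_{[*]}$; (iv) verify directly (\Cref{lem:kappaformality}) that the formality equivalence $\Phi_{\bullet}$ of \cite{ultra1} sends the torsion generators $[\E^{\otimes s+1}/(p^{k_p},\dots,u_{n-1}^{k_p})]_{\cF}$ to their algebraic counterparts. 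Step (iv) is tractable precisely because $\Phi_1$ is symmetric monoidal and its effect on $\pi_0$ is explicitly identified; this is the concrete content that your ``restrict to torsion'' step is missing.

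A secondary issue: your proposed definition $\Prod{\cF}^{\Pic}\cC := \Ind\bigl(\Prod{\cF}^{\Pic}(\cC^{\omega})\bigr)$ is ill-formed as written, since $\cC^{\omega}$ contains no invertible objects when the unit is non-compact, so there is no Pic-cell filtration on it in the sense of \cite{ultra1}. The paper's fix (\Cref{defn:picproto}) is to filter $\Thick(\Pic(\cC))$ by Pic-cells and then \emph{intersect} with $\cC^{\omega}$; the resulting protoproduct is a priori only symmetric oidal, and a separate argument (\Cref{prop:picprotomonoidal}), using that the units are uniformly buildable from $2^n$ compact cells, is required to produce a quasi-unit and hence a symmetric monoidal structure.
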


Moreover, in \cref{thm:expicgen}, we relate these Picard $\infty$-groupoids to the ultraproduct of the Picard groups of the $K(n)$-local categories at a nonprincipal ultrafilter.

The proof of Theorem \ref{mainthm1} is not purely formal primarily because the $K(n)$-local category $\cSpnp$ behaves, in many ways, quite differently than the $E_n$-local category. In particular, it is a naturally occurring example of a symmetric monoidal compactly generated $\infty$-category in which the unit is not compact. This leads to real difficulties that must be surmounted in order to produce a well-behaved $\Pic$-generated protoproduct. 

The construction of the $\Pic$-generated protoproduct in \cite{ultra1} requires that the invertible objects in the input $\infty$-categories are compact. This is not true in $\cSpnp$ or $\cFrnp$. We define a notion of the $\Pic$-generated protoproduct that does not require the invertible objects to be compact. In general, this construction produces non-unital symmetric monoidal $\infty$-categories. To address this issue in our situation, we make use of the fact that the units in $\cSpnp$ and $\cFrnp$ can be built from compact objects uniformly in the prime. In this way we obtain the symmetric mondoial $\infty$-categories in the equivalence of Theorem \ref{mainthm1}. 

The proof of Theorem \ref{mainthm1} follows the same steps as the proof of the main theorem in \cite{ultra1}. The first step was to produce equivalences
\begin{equation} \label{ultra1eq2}
\Prod{\cF}^{\flat} \Mod_{\E^{\otimes s}} \simeq \Prod{\cF}^{\flat} \Mod_{(\E^{\otimes s})_*}
\end{equation}
and the second step was to deduce the main theorem by descent along these equivalences.

From the point of view of local duality contexts~\cite{heardvalenzuela_localduality}, the $K(n)$-local category can be realized as a torsion subcategory in the $E(n)$-local category. Thus, to get the monochromatic analogue of the equivalence \eqref{ultra1eq2}, we restrict it to an equivalence between suitable torsion subcategories. For a more detailed explanation, see the next section.

\subsection*{Acknowledgements} It is a pleasure to thank Rune Hausgeng for helpful conversations.

\section{Main theorem and outline of the proof}

\subsection{The main theorem}

The goal of this paper is to prove the following theorem:

\begin{thm}\label{thm:knmain}
For any non-principal ultrafilter $\cF$ on $\bP$, there is a symmetric monoidal equivalence of compactly generated $\Q$-linear stable $\infty$-categories
\[
\Prod{\cF}^{\Pic} \cSpnp \simeq \Prod{\cF}^{\Pic} \cFrnp.
\]
\end{thm}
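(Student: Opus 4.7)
The plan is to mirror the two-step strategy used for the $\E$-local theorem in \cite{ultra1}, adapted to the monochromatic setting via local duality and via the modified $\Pic$-generated protoproduct developed in the earlier sections of this paper. Recall that in the $\E$-local case one first produces module-category equivalences
\[
\Prod{\cF}^{\flat} \Mod_{\E^{\otimes s}} \simeq \Prod{\cF}^{\flat} \Mod_{(\E^{\otimes s})_*}
\]
for every cosimplicial degree $s$, and then descends along the $\E$-based cobar resolution. I would carry out the same blueprint after restricting each side to the appropriate $I_n$-torsion subcategory, using the local duality context of \cite{heardvalenzuela_localduality}: $\cSpnp$ is realized as the torsion subcategory of $\Mod_{\E}$ at the maximal ideal $I_n$, and there is a parallel algebraic description of $\cFrnp$ on the Franke side.

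For the first step I would verify that the equivalence of \cite{ultra1} restricts to an equivalence of torsion subcategories, uniformly in $s$. The key inputs are that the torsion condition is generated by a finite set of compact objects (generalized Moore spectra and their algebraic analogues) and that these generators are manifestly preserved under the flat protoproduct, so the restriction is essentially formal from the compact-object description of torsion. For the second step I would then descend along the $K(n)$-local cobar $\cSnp \to \E^{\csmash \bullet+1}$, which presents the unit as a totalization, and its algebraic analogue in $\cFrnp$. Applying the torsion equivalences of the first step degree-wise and running the descent argument of \cite{ultra1}, I would recover a symmetric monoidal equivalence of the totalizations, which is the equivalence in the theorem; compatibility with the $\E$-local equivalence of \cite{ultra1} is automatic from the construction, since the two are related by restriction to a torsion subcategory and subsequent descent.

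The principal technical difficulty, already flagged in the introduction, is that the units of $\cSpnp$ and $\cFrnp$ are \emph{not} compact, so the $\Pic$-generated protoproduct of \cite{ultra1} does not apply verbatim. I would use the modified version of this construction developed earlier in the paper, which produces an \emph{a priori} non-unital symmetric monoidal $\infty$-category, and then reinstate the unit by hand using the fact that $\cSnp$ can be built from compact objects (via generalized Moore spectra and their $v_n$-towers) uniformly in the prime $p$. Establishing this uniform presentation on both the topological and algebraic sides, and then verifying that the resulting unit is compatible with the descent equivalence of the second step, is where I expect the bulk of the genuinely new technical work to lie.
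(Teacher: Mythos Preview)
Your outline matches the paper's strategy: restrict the cosimplicial module-category equivalence of \cite{ultra1} to torsion subcategories, then descend via the modified $\Pic$-generated protoproduct. A few points deserve sharpening.

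First, a small slip: $\cSpnp$ is not the torsion subcategory of $\Mod_{\E}$. Via local duality it is equivalent to the monochromatic category $\Mnp \subset \Spnp$; the torsion \emph{module} categories $\tMod_{\E^{\otimes s}}$ enter only at the cosimplicial level, after one has already passed to the Amitsur resolution.

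Second, you omit an intermediate reduction that the paper needs: before local duality and formality can be applied, one must pass from $\Tot\Prod{\cF}^{\Pic}\cMod_{\E^{\otimes \bullet+1}}$ to $\Tot\Prod{\cF}^{\flat}\cMod_{\E^{\otimes \bullet+1}}$. This is not automatic, since the $\Pic$-cell and cell filtrations on $\cMod_{\E^{\otimes s}}$ could in principle differ; the paper obtains it from the Picard group computation $\pi_0\Pic(\cMod_{\E})\cong\Z/2$ (\Cref{prop:etheory}, \Cref{cor:frmodpicard}) combined with the cosimplicial comparison lemma of \cite{ultra1}. Without this step the flat-protoproduct formality theorem does not directly apply.

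Third, the identification of $\Prod{\cF}^{\flat}\tMod_{A_i}$ with the torsion objects in $\Prod{\cF}^{\flat}\Mod_{A_i}$, which you describe as ``essentially formal from the compact-object description of torsion,'' is in fact the content of \Cref{prop:protopitorsion} and \Cref{cor:prototorsion}: one must show that $\pi_{[*]}$-torsion objects in the protoproduct are generated by the classes $[M^{(k_i)}]_{\cF}$, and the paper does this via an Adams-type resolution argument rather than a direct compactness check.
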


The notation in the statement of the theorem requires explanation. The $\infty$-category $\cSpnp$ is the $K(n)$-local category, which can be constructed as the localization of $\Spnp$ by the $\E$-localization of a type $n$ complex. Analogously, the $\infty$-category $\cFrnp$ is the localization of $\Frnp$ at $(\E)_{\star}/I_{n,p}$ (see Section 4.1 of \cite{ultra1} for a discussion of the $\star$-operator and formality) and $\Frnp$ is the underlying $\infty$-category of the category of quasi-periodic complexes of comodules over $(\pi_0 \E, \pi_0(\E \wedge \E))$ periodized with respect to the comodule $\pi_2 \E$. The Pic-generated protoproduct is a generalization of the Pic-generated protoproduct of \cite{ultra1} to symmetric monoidal compactly generated $\infty$-categories in which the unit is not necessarily compact.

In \cite[Section 3]{ultra1}, we introduce the notion of a protoproduct of compactly generated $\infty$-categories. The protoproduct takes in a collection of compactly generated $\infty$-categories equipped with a filtration on the subcategory of compact objects and produces a compactly generated $\infty$-category. The $\Pic$-generated protoproduct of \cite[Section 3]{ultra1} is the special case where the $k$th stage in the filtration consists of compact objects that can be built out of at most $k$ elements in the Picard group of the $\infty$-category. Since the unit is assumed to be compact in \cite{ultra1}, the unit is contained in every filtration degree and the $\Pic$-generated ultraproduct is symmetric monoidal by construction. In this paper, we are concerned with $\cSpnp$ and $\cFrnp$, which are symmetric monoidal compactly generated $\infty$-categories with non-compact unit. However, intuition suggests that the Pic-generated protoproduct of these $\infty$-categories should still be symmetric monoidal: the units in these $\infty$-categories can be built out of compact objects in a prime-independent way. A large part of the work in \cref{ssec:four} goes towards proving that the $\infty$-categories are also unital.

\subsection{Leitfaden for the proof and conventions} \label{leitfaden}

The next diagram summarizes the various steps in the proof of \Cref{thm:knmain}:

\[
\resizebox{\textwidth}{!}{
\xymatrix{
\Prod{\cF}^{\Pic} \cSpnp \ar@{^{(}->}[r] \ar@{-->}[ddd]^{\sim} & \Tot\Prod{\cF}^{\Pic}\cMod_{(\E^{\otimes \bullet+1})}  \ar[r]^-{\sim} &\Tot\Prod{\cF}^{\flat}\cMod_{(\E^{\otimes \bullet+1})} \ar[r]^-{\sim} &  \Tot\Prod{\cF}^{\flat}\tMod_{(\E^{\otimes \bullet+1})} \ar[d]^{\sim} \\
 & \Prod{\cF}^{\Pic}\Spnp \ar[ul]  \ar@{^{(}->}[r] \ar[d]^{\sim} & \Tot\Prod{\cF}^{\flat}\Mod_{(\E^{\otimes \bullet+1})}  \ar[d]^{\sim} \ar[r] & \Tot(\Prod{\cF}^{\flat}\Mod_{(\E^{\otimes \bullet+1})})^{\tors} \ar[d]^{\sim}   \\
 & \Prod{\cF}^{\Pic} \Frnp \ar@{^{(}->}[r] \ar[dl] & \Tot\Prod{\cF}^{\flat}\Mod_{(\E^{\otimes \bullet+1})_{\star}} \ar[r]  & \Tot(\Prod{\cF}^{\flat} \Mod_{(\E^{\otimes \bullet+1})_{\star}})^{\tors}  \\
\Prod{\cF}^{\Pic} \cFrnp \ar@{^{(}->}[r] & \Tot\Prod{\cF}^{\Pic}\cMod_{(\A^{\otimes \bullet+1})} \ar[r]^-{\sim} & \Tot\Prod{\cF}^{\flat} \cMod_{(\A^{\otimes \bullet+1})} \ar[r]^-{\sim} & \Tot\Prod{\cF}^{\flat} \tMod_{(\E^{\otimes \bullet+1})_{\star}}  \ar[u]_{\sim}}}
\]

In this diagram:
\begin{itemize}
	\item $\E$ is Morava $E$-theory at height $n$ and the prime $p$ and $\K$ denotes the corresponding Morava $K$-theory spectrum. Implicitly in this notation is the choice of the Honda formal group law over $\F_{p^n}$. By the Goerss--Hopkins--Miller theorem, $\E$ has a canonical structure as a $\K$-local $\mathbb{E}_{\infty}$-ring spectrum.
	\item $\A = \underline{P}((\E)_0\E)$ is the commutative algebra object in the symmetric monoidal $\infty$-category $\Frnp$ studied in \cite[Section 5.3]{ultra1}.
	\item $\cMod_{(\E^{\otimes s})}$ denotes the symmetric monoidal $\infty$-category of modules over $L_{\K}(\E^{\otimes s})$ in $\cSpnp$, see \Cref{ssec:ex} for a more precise definition.
	\item $\cMod_{(\A^{\otimes s})}$ denotes the symmetric monoidal $\infty$-category of modules over the completion of $\A^{\otimes s}$ as an object in $\cFrnp$. 
	\item Torsion objects in this context refers to torsion objects in the sense of local duality contexts~\cite{heardvalenzuela_localduality}, and the corresponding categories of torsion objects are indicated by a superscript ``$\tors$''. 
	\item The protoproduct of the form $\Prod{\cF}^{\Pic}$ is a generalization of the Pic-protoproduct of \cite[Section 3.5]{ultra1} to the $\infty$-categories of interest in this paper. Its construction and properties are given in more detail in \Cref{ssec:four}.
	\item Similarly to the Pic-protoproduct, the protoproduct of the form $\Prod{\cF}^{\flat}$ is a generalization of the cell-protoproduct of \cite[Section 3.5]{ultra1} to the $\infty$-categories of interest in this paper.
\end{itemize}

The equivalences in the top (topological) and bottom (algebraic) part of the diagram are established in parallel, so we will only comment on the former:

\begin{itemize}
	\item The symmetric monoidal equivalence 
	\[
	\Tot\Prod{\cF}^{\Pic}\cMod_{(\E^{\otimes \bullet+1})} \lra{\sim} \Tot\Prod{\cF}^{\flat}\cMod_{(\E^{\otimes \bullet+1})}
	\] 
	uses the Picard group computation of $\cMod_{\E}$ together with a cosimplicial detection result proved in \cite{ultra1}, see \cref{cor:totalization}. 
	\item The symmetric monoidal equivalence 
	\[
	\Tot\Prod{\cF}^{\flat}\cMod_{(\E^{\otimes \bullet+1})} \lra{\sim}  \Tot\Prod{\cF}^{\flat}\tMod_{(\E^{\otimes \bullet+1})}
	\] 
	follows from applying the protoproduct construction to the local duality equivalence of \cref{prop:etheory}, following~\cite{heardvalenzuela_localduality}.
	\item The symmetric monoidal equivalence 
	\[
	\Tot\Prod{\cF}^{\flat}\tMod_{(\E^{\otimes \bullet+1})} \lra{\sim} \Tot(\Prod{\cF}^{\flat}\Mod_{(\E^{\otimes \bullet+1})})^{\tors}
	\]
	follows from our study of torsion objects in protoproducts in \cref{ssec:prototorsion}.
\end{itemize}

In order to finish the proof, we need to relate the topological and algebraic sides of the diagram. This is achieved in two steps:
\begin{itemize}
\item By \cref{sec:end}, there is a symmetric monoidal equivalence 
\[
\Tot(\Prod{\cF}^{\flat}\Mod_{(\E^{\otimes \bullet+1})})^{\tors} \lra{\sim} \Tot(\Prod{\cF}^{\flat} \Mod_{(\E^{\otimes \bullet+1})_{\star}})^{\tors}.
\] 
\item By making use of the uniform descent results described in \cite[Sections 5.1 and 5.2]{ultra1}, \cref{cor:totalization} provides equivalences 
\[
\Prod{\cF}^{\Pic} \cSpnp \simeq \Loc \Pic \Tot\Prod{\cF}^{\Pic}\cMod_{(\E^{\otimes \bullet+1})}
\]
and 
\[
\Prod{\cF}^{\Pic} \cFrnp \simeq \Loc \Pic \Tot\Prod{\cF}^{\Pic}\cMod_{(\A^{\otimes \bullet+1})}.
\]
Since each of these equivalences is symmetric monoidal and colimit preserving, there is an induced equivalence
\[
\Prod{\cF}^{\Pic} \cSpnp \lra{\sim} \Prod{\cF}^{\Pic} \cFrnp.
\] 
\end{itemize}
The commutativity of the diagram is established in \cref{ssec:comparison}. In particular, this implies that the equivalence produced here is compatible with the equivalence in the main theorem of \cite{ultra1}. Finally, we describe some further conventions used throughout the paper:  

\begin{itemize}
	\item We write $\bP$ for the set of prime numbers.
	\item We write $\Hom$ for mapping spectra in stable $\infty$-categories.
	\item The $\infty$-category of commutative monoids in a symmetric monoidal $\infty$-category $\cC$ will be denoted by $\CAlg(\cC)$ and we refer to its objects as commutative algebras in $\cC$. For $\cC = \Sp$ equipped with its natural symmetric monoidal structure, we usually say $\mathbb{E}_{\infty}$-ring spectrum or $\mathbb{E}_{\infty}$-ring instead of commutative algebra.
	\item Let $\Pr^L$ be the $\infty$-category of presentable $\infty$-categories and left adjoint functors, and let $\Catomega$ denote the $\infty$-category of compactly generated $\infty$-categories and colimit preserving maps that preserve compact objects. 
	\item Given two $\infty$-categories, $\cC$ and $\cD$, let $\Fun^L(\cC,\cD)$ be the $\infty$-category of colimit preserving functors from $\cC$ to $\cD$.
	\item A presentable symmetric monoidal $\infty$-category $\cC = (\cC,\otimes)$ is called presentably symmetric monoidal if $\cC \in \CAlg(\Pr^L)$.
	\item A compactly generated symmetric monoidal $\infty$-category is an object in $\CAlg(\Catomega)$ and a symmetric oidal $\infty$-category is an object in $\Alg_{\bE_{\infty}^{\text{nu}}}(\Catomega)$, where $\bE_{\infty}^{\text{nu}}$ is the non-unital version of the $\bE_{\infty}$-operad (See \cite[Section 5.4.4]{ha}).
	\item If $\cC$ is a presentably symmetric monoidal stable $\infty$-category and $A$ is a commutative algebra in $\cC$, then $\Mod_A(\cC)$ denotes the stable $\infty$-category of modules over $A$ in $\cC$. In the case $\cC = \Sp$, we will write $\Mod_A$ instead of $\Mod_A(\cC)$ for simplicity. Similarly, we write $\CAlg_A(\cC)$ for the $\infty$-category of commutative $A$-algebras in $\cC$ and omit the $\infty$-category $\cC$ when it is clear from context and in particular whenever $\cC = \Sp$. 
 	\item A module $M$ over a commutative ring $R$ is said to be $I$-torsion for an ideal $I \subseteq R$ if any element $m\in M$ is annihilated by a power of $I$.
\end{itemize}

\section{Torsion and completion}\label{sec:localduality}

In this section, we collect some material on categories of torsion and complete objects; in order to develop the theory for the algebraic and topological sides in parallel, we formulate our results in general terms. 

\subsection{The context} \label{Section3.1}

Throughout this section, suppose $\cC=(\cC,\otimes,\mathbf{1}_{\cC})$ is a presentably symmetric monoidal stable $\infty$-category which is compactly generated by its invertible objects. In particular, this implies that the dualizable objects in $\cC$ can be identifies with the compact objects in $\cC$. Let $F \in \cC^{\omega}$ be a compact object. We will sometimes refer to the pair $(\cC,F)$ as a local duality context.

Given a pair $(\cC,F)$ as above, consider the localizing ideal $\Loc^{\otimes}(F)$ in $\cC$ generated by $F$, which coincides with the localizing subcategory of $\cC$ generated by $F \otimes \Pic(\cC)$. The canonical inclusion of $\Loc^{\otimes}(F)$ into $\cC$ admits a right adjoint $\Gamma_F$ by the adjoint functor theorem. The symmetric monoidal  product on $\cC$ restricts to a symmetric monoidal product on $\Loc^{\otimes}(F)$ whose unit is given by $\Gamma_F \mathbf{1}$. Furthermore, write $L_X\colon \cC \to \cC$ for the Bousfield localization functor associated to $X$ given by inverting the $(X \otimes -)$-equivalences. We define the $\infty$-category $\ccC_X = L_X\cC$ of $X$-complete objects in $\cC$ as the essential image of $L_X$; if no confusion is likely to arise, we will also omit the subscript $X$ from the notation and write $\ccC$ for $\widehat{\cC}_X$. 

\begin{ex}\label{ex:enlocal}
Given a nonnegative integer $n$ and a prime $p$, an example of a local duality context is given by the $\E$-local category $\Spnp$ with $F = L_{\E}F(n,p)$ for some finite type $n$ spectrum $F(n,p)$. In this case, $L_F$ is equivalent to $\K$-localization and $\widehat{\cC}=\cSpnp$ is the category of $\K$-local spectra. 
\end{ex}

The next result summarizes the key features of local duality contexts that we will use throughout this paper; for the proofs, see \cite{axiomatic} and \cite{heardvalenzuela_localduality}.

\begin{prop}\label{prop:localdualitycontext}
The following holds for a local duality context $(\cC,F)$ as defined above:
	\begin{enumerate}
		\item The category $\ccC$ is a presentable stable $\infty$-category compactly generated by $F \otimes \Pic(\cC)$. Furthermore, the canonical inclusion functor $\ccC \to \cC$ is fully faithful and preserves limits and compact objects, while colimits in $\ccC$ are computed by applying $L_F$ to the corresponding colimit in $\cC$. 
		\item The localized monoidal structure $-\csmash- = L_{F}(-\otimes-)$ equips $\ccC$ with the structure of a presentably symmetric monoidal $\infty$-category with unit $L_{F}\mathbf{1}_{\cC}$. Moreover, if $X \in \Thick_{\cC}^{\otimes}(F)$, the thick tensor ideal, then $L_FX \simeq X$, so $X \cotimes Y \simeq X \otimes Y$ for all $Y \in \cC$. 
		\item The $\infty$-groupoid $L_F\Pic(\cC) \subseteq \Pic(\ccC)$ generates $\ccC$. In particular, $\ccC = \Loc\Pic(\ccC)$.
		\item The compact objects in $\ccC$ consist of the thick subcategory $\Thick(F \otimes \Pic(\cC)) = \Thick_{\cC}^{\otimes}(F)$. 
		\item There exists a cofiltered system of compact objects $(M_i(F))_{i \in I}$ in $\ccC$ and a natural equivalence $L_FX \simeq \lim_{I} M_i(F) \otimes X$ for all $X \in \ccC$. 		
		\item The localization functor $L_F$ induces a symmetric monoidal equivalence $\Loc^{\otimes}(F) \simeq \ccC$. In particular, there exists a nonunital symmetric monoidal colimit preserving functor $\ccC \to \cC$. 
	\end{enumerate}
\end{prop}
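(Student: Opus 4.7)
The strategy is to recognize $(\cC, \Thick_{\cC}^{\otimes}(F))$ as a local duality context in the sense of Heard--Valenzuela and then extract the listed statements from the standard machinery of \cite{heardvalenzuela_localduality}, combined with the axiomatic framework of \cite{axiomatic}. The only input needed to enter that framework is that $\Loc^{\otimes}(F)$ be compactly generated, which follows immediately from the hypothesis that $\cC$ is compactly generated by $\Pic(\cC)$: one has
\[
\Thick_{\cC}^{\otimes}(F) = \Thick(F \otimes \Pic(\cC)) \quad \text{and} \quad \Loc^{\otimes}(F) = \Loc(F \otimes \Pic(\cC)),
\]
with compact generators $F \otimes \alpha$ for $\alpha \in \Pic(\cC)$.

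I would take the symmetric monoidal local duality equivalence $L_F \colon \Loc^{\otimes}(F) \simeq \ccC$ of (6) as the pivot of the proof, so that the remaining assertions can be transported through it. Its construction assembles the torsion and completion adjunctions into a recollement of $\cC$ whose two fiber pieces are $\Loc^{\otimes}(F)$ and $\ccC$, and the symmetric monoidality is automatic because this recollement is compatible with the tensor structure. The inclusion $\Loc^{\otimes}(F) \hookrightarrow \cC$ is colimit preserving and carries the internal tensor product to $\otimes$, but it does not send $\Gamma_F \mathbf{1}_{\cC}$ to $\mathbf{1}_{\cC}$, which is the source of the non-unital qualifier in the last assertion of (6).

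Given (6), statements (1) and (4) follow by transport: $L_F$ is an equivalence and so preserves compacts, while the compact objects of $\Loc^{\otimes}(F)$ are $\Thick_{\cC}^{\otimes}(F)$. The required auxiliary observation is that $\Thick_{\cC}^{\otimes}(F) \subseteq \ccC$, i.e.\ dualizable torsion objects are already $F$-complete; this is a standard fact from tensor-triangular duality that one quotes from \cite{heardvalenzuela_localduality}. It implies that $L_F$ acts as the identity on the generators $F \otimes \Pic(\cC)$, which yields both the compact generation assertion in (1) and the equivalence $X \cotimes Y \simeq X \otimes Y$ in (2) for $X \in \Thick_{\cC}^{\otimes}(F)$. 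For (3), the reflector $L_F \colon \cC \to \ccC$ is symmetric monoidal and therefore sends invertibles to invertibles; applying it to $\cC = \Loc \Pic(\cC)$ and using that $L_F$ is cocontinuous and essentially surjective gives $\ccC = \Loc \Pic(\ccC)$. The remaining parts of (1)---fully faithfulness of $\ccC \hookrightarrow \cC$, preservation of limits and compacts, and the colimit formula via $L_F$---are the standard package for reflective Bousfield localizations.

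The symmetric monoidal structure in (2) exists because the $F$-acyclics form a tensor ideal, which is automatic since $F$-acyclicity means $F \otimes (-) \simeq 0$. The cofiltered system $(M_i(F))$ in (5) is a progenerator tower for the completion, obtained in the Heard--Valenzuela framework by iterated Koszul-type fibers attached to $F$; I expect this to be the main technical input, as one needs a cofinal system in $\Thick_{\cC}^{\otimes}(F)$ equipped with sufficiently coherent structure maps for the natural equivalence $L_F X \simeq \varprojlim_i M_i(F) \otimes X$ to hold for all $X \in \ccC$. Since this construction is carried out in \cite{heardvalenzuela_localduality} under the hypotheses in force here, we may simply quote it.
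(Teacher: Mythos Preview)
Your proposal is correct and follows essentially the same approach as the paper: both proofs amount to quoting the relevant results from \cite{heardvalenzuela_localduality} and \cite{axiomatic}, with the local duality equivalence of (6) serving as the key input from which the compact generation statements are deduced. The only place the paper adds slightly more detail is the identity $\Thick_{\cC}^{\otimes}(F) = \Thick(F \otimes \Pic(\cC))$, which you assert outright but the paper justifies by noting that $\cC^{\omega} = \Thick(\Pic(\cC))$ forces $\Thick(F \otimes \Pic(\cC))$ to be a thick tensor ideal.
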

\begin{proof}
The first two claims follow directly from the construction of $\ccC$ as a localization, see for example~\cite[Theorem 2.21(3)]{heardvalenzuela_localduality}, while the identification of a set of compact generators is a consequence of (6). Since $\Pic(\cC)$ generates $\cC$, Part (3) follows from adjunction. Part of Part (4) is \cite[Lemma (2.15)]{heardvalenzuela_localduality}. To show that $\Thick(F \otimes \Pic(\cC)) = \Thick_{\cC}^{\otimes}(F)$, it is enough to show that $\Thick(F \otimes \Pic(\cC))$ is a thick tensor ideal. This follows from the fact that $\cC^{\omega} = \Thick(\Pic(\cC))$. Part (5) is a consequence of \cite[Equation (2.30)]{heardvalenzuela_localduality} and the local duality equivalence~\cite[Theorem 2.21(4)]{heardvalenzuela_localduality}.

The first part of the final claim is the content of \cite[Theorem 2.21(3) and Proposition 2.34]{heardvalenzuela_localduality}. The desired functor $\ccC \to \cC$ is the composite of the equivalence with the canonical inclusion $\Loc^{\otimes}(F) \to \cC$. Note that the latter functor is left adjoint to a symmetric monoidal functor and hence has the structure of a symmetric colax monoidal functor. In order to see that it is in fact nonunital strict symmetric monoidal, it thus suffices to pass to homotopy categories, where it can be checked directly. 
\end{proof}

\begin{rem}
Note that the unit object $L_{F}\mathbf{1}_{\cC} \in \ccC$ is in general not compact, as \Cref{ex:enlocal} for $n>0$ demonstrates. 
\end{rem}

\begin{cor}\label{cor:savetheday}
Let $(\cC,F)$ be a local duality context, then $\ccC^{\omega} \subseteq \Thick(\Pic(\ccC))$. 
\end{cor}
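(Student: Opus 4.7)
The plan is to combine parts (2), (3), and (4) of \Cref{prop:localdualitycontext} by transporting a thick-subcategory relation from $\cC$ to $\ccC$ via the localization functor $L_F$. Using Part (4), I would reduce the statement to showing that every object of the form $F \otimes L$ with $L \in \Pic(\cC)$ lies in $\Thick(\Pic(\ccC))$, because $\ccC^\omega = \Thick(F \otimes \Pic(\cC))$ and $\Thick(\Pic(\ccC))$ is itself a thick subcategory.

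Next, I would exploit the hypothesis that $\cC$ is compactly generated by its invertible objects, which yields $\cC^\omega = \Thick(\Pic(\cC))$. Since $F$ is compact and $L$ is dualizable hence compact, the object $F \otimes L$ is compact in $\cC$, so $F \otimes L \in \Thick_\cC(\Pic(\cC))$.

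Now I would apply the Bousfield localization $L_F$, which is exact between stable $\infty$-categories and therefore sends thick subcategories to thick subcategories. This places $L_F(F \otimes L)$ inside $\Thick_\ccC(L_F\Pic(\cC))$. By Part (2), the relation $F \otimes L \in \Thick_\cC^\otimes(F)$ implies $L_F(F \otimes L) \simeq F \otimes L$, and by Part (3) we have $L_F\Pic(\cC) \subseteq \Pic(\ccC)$. Chaining these two facts gives $F \otimes L \in \Thick(\Pic(\ccC))$, which completes the reduction.

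I do not expect any serious obstacle: the argument is essentially a bookkeeping exercise once one observes that $L_F$ is exact and that the generating compact object $F$ is already $F$-complete. The only subtle point to be careful about is distinguishing $\Thick$ taken in $\cC$ from $\Thick$ taken in $\ccC$, and invoking Part (2) in the precise form that identifies $L_F X$ with $X$ for $X$ in the thick tensor ideal generated by $F$, so that the image of the thick-subcategory relation in $\cC$ transfers to the desired relation in $\ccC$ without introducing any completion terms.
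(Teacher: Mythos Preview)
Your proposal is correct and follows essentially the same route as the paper: both arguments use that $F\otimes L$ lies in $\Thick_\cC(\Pic(\cC))$, push this relation through the exact functor $L_F$, and then invoke Parts (2), (3), and (4) of \Cref{prop:localdualitycontext} to land in $\Thick(\Pic(\ccC))$. The only cosmetic difference is that the paper first records $F\in\Thick(\Pic(\cC))$ and then tensors with $L$, whereas you directly observe that $F\otimes L$ is compact in $\cC$; these are the same step.
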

\begin{proof}
Since $F \in \cC$ is compact, \Cref{prop:localdualitycontext}(1) gives $F \in \Thick(\Pic(\cC))$. It thus follows from \Cref{prop:localdualitycontext}(3) that
\[
F \otimes \Pic(\cC) \subseteq \Thick(L_F\Pic(\cC)) \subseteq \Thick(\Pic(\ccC)).
\]
By \Cref{prop:localdualitycontext}(4), $\ccC^{\omega}  = \Thick(F \otimes \Pic(\cC))$, which implies the claim.

\end{proof}

\subsection{Modules and local duality}\label{ssec:localduality}

Let $(\cC,F)$ be a local duality context and let $A \in \CAlg(\cC)$ be a commutative algebra in $\cC$. Define $\Mod_A = \Mod_A(\cC)$ to be the $\infty$-category of $A$-modules internal to $\cC$ and $\cMod_{A} = \Mod_{L_FA}(\ccC)$ to be the $\infty$-category of $L_FA$-modules internal to $\ccC = \widehat{\cC}_F$. A standard argument (see e.g., \cite[Lemma 2.25]{arthur}  applied to $A \otimes -\colon \cC \to \Mod_A(\cC)$) shows that the $\infty$-category $\Mod_A$ is compactly generated by $A \otimes \Pic(\cC)$. The $\infty$-category $\cMod_{A}$ is compactly generated by $L_FA \cotimes (F \otimes \Pic(\cC))$ by \Cref{prop:localdualitycontext}(1). An object $X \in \cC$ is called full if the functor $X \otimes - \colon \cC \to \cC$ is conservative. The next proposition generalizes a result due to Hovey \cite[Corollary 2.2]{hoveycsc}. 

\begin{prop}\label{prop:comparelocfun}
Let $\cC$ be a monogenic compactly generated presentably symmetric monoidal stable $\infty$-category, $A \in \CAlg(\cC)$ a commutative algebra, and $F \in \cC^{\omega}$ a compact object in $\cC$, then there is a natural equivalence
\[
\xymatrix{L_{A \otimes F} \ar[r]^-{\simeq} & L_FL_A}
\] 
of endofunctors on $\cC$. In particular, if $A$ is full, then $L_{A\otimes F} \simeq L_F$. 
\end{prop}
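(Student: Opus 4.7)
The plan is to verify that $L_F L_A X$ satisfies the universal property defining $L_{A \otimes F} X$: it is $(A \otimes F)$-local and the natural map $X \to L_F L_A X$ is an $(A \otimes F)$-equivalence. The equivalence assertion is routine: the composite factors as $X \to L_A X \to L_F L_A X$, where the first arrow is an $A$-equivalence (hence an $(A \otimes F)$-equivalence after tensoring with $F$) and the second is an $F$-equivalence (hence an $(A \otimes F)$-equivalence after tensoring with $A$). The main content therefore lies in the locality.

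To address the locality, I would first reduce it by showing that the $(A \otimes F)$-acyclic localizing subcategory is generated by the union of the $A$-acyclic and $F$-acyclic objects. Given $C \in \cC$ with $A \otimes F \otimes C \simeq 0$, consider $F \otimes L_A C$: it is $A$-acyclic since $A \otimes F \otimes L_A C \simeq A \otimes F \otimes C \simeq 0$, and it is $A$-local because $F$ is compact, hence dualizable in the monogenic setting, giving $\Hom(Z, F \otimes L_A C) \simeq \Hom(F^{\vee} \otimes Z, L_A C) \simeq 0$ for any $A$-acyclic $Z$ (using that $F^{\vee} \otimes Z$ is again $A$-acyclic). Therefore $F \otimes L_A C \simeq 0$, so $L_A C$ is $F$-acyclic, and the fibre sequence $\fib(C \to L_A C) \to C \to L_A C$ presents $C$ as an extension of an $F$-acyclic object by an $A$-acyclic one. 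As a consequence, an object of $\cC$ is $(A \otimes F)$-local if and only if it is simultaneously $A$-local and $F$-local.

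Given this reduction, it suffices to prove that $L_F L_A X$ is both $F$-local (which is immediate by construction) and $A$-local. For the $A$-locality, the plan is to express $L_F L_A X$ as a cofiltered limit of tensors $M \otimes L_A X$ with each $M$ compact in $\cC$, via a completion-style formula for $L_F$ analogous to \Cref{prop:localdualitycontext}(5); each $M$ is then dualizable in the monogenic $\cC$, so $M \otimes L_A X$ remains $A$-local, and the limit stays $A$-local because the $A$-local subcategory is reflective and hence closed under limits in $\cC$. The main obstacle I expect is justifying this limit description of $L_F$ in the monogenic compactly generated setting of the proposition, which is weaker than the compactly-generated-by-invertibles context of \Cref{Section3.1}; once that is in place, $L_F L_A X$ is orthogonal to each generator of the $(A \otimes F)$-acyclic subcategory and hence to all of it, which closes the argument and also yields the corollary that $L_{A \otimes F} \simeq L_F$ for full $A$, since fullness forces $(A \otimes F)$-acyclicity to coincide with $F$-acyclicity.
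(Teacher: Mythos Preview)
Your proof is correct and uses the same essential ingredients as the paper: the factorization of $X \to L_F L_A X$ as an $A$-equivalence followed by an $F$-equivalence, the limit formula $L_F(-) \simeq \lim_i M_i \otimes (-)$ with $M_i$ dualizable, and the dualizability trick $\Hom(Z, M \otimes Y) \simeq \Hom(M^\vee \otimes Z, Y)$ to move the tensor factor across.

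The one organizational difference is that the paper does not pass through your intermediate characterization ``$(A\otimes F)$-local $\Leftrightarrow$ $A$-local and $F$-local''. Instead it checks directly that $\Hom(Z, L_F L_A X) \simeq 0$ for an arbitrary $(A\otimes F)$-acyclic $Z$: writing $L_F L_A X \simeq \lim_i M_i(F) \otimes L_A X$ with $M_i(F) \in \Thick^{\otimes}(F)$, one has $\Hom(Z, M_i(F) \otimes L_A X) \simeq \Hom(Z \otimes DM_i(F), L_A X)$, and $Z \otimes DM_i(F)$ is $A$-acyclic because $DM_i(F) \in \Thick^{\otimes}(F)$ and $Z \otimes K \otimes A \simeq 0$ for any $K \in \Thick^{\otimes}(F)$. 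This is precisely the same computation you perform for $A$-locality, just applied to a general $(A\otimes F)$-acyclic $Z$ rather than an $A$-acyclic one; your decomposition lemma is a pleasant byproduct but is not needed for the proof.

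Your flagged obstacle is not one: a monogenic compactly generated $\cC$ is compactly generated by the unit, which is invertible, so the hypotheses of \Cref{prop:localdualitycontext} are satisfied and part (5) gives the limit formula directly.
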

\begin{proof}
There is a natural transformation $\mathrm{id} \to L_FL_A$ obtained by combining the unit maps of $L_F$ and $L_A$. We need to show that, for any $X \in \cC$, the canonical morphism $X \to L_FL_AX$ is an $(F \otimes A)$-equivalence to an $(F\otimes A)$-local object. The map in question factors as $X \to L_AX \to L_F(L_AX)$, i.e., an $A$-equivalence followed by an $F$-equivalence, hence the composite is an $(F \otimes A)$-equivalence.

In order to show that $L_FL_AX$ is $(F\otimes A)$-local, consider an $(F\otimes A)$-acyclic object $Z$. By Parts (4) and (5) of \cref{prop:localdualitycontext}, $L_F(-) \simeq \lim_{i\in I}(M_i(F)\otimes -)$ for a filtered diagram consisting of objects $M_i(F) \in \Thick_{\cC}^{\otimes}(F) \subset \ccC$ for all $i \in I$. Consider the objects $M_i(F)$ as objects in $\cC$ and let $D(-)$ represent the monoidal dual in $\cC$. Since $(Z\otimes K) \otimes A \simeq Z \otimes (K \otimes A) \simeq 0$ for any $K \in \Thick_{\cC}^{\otimes}(F)$ and since $\Thick_{\cC}^{\otimes}(F)$ is closed under taking monoidal duals, our assumption on $Z$ implies that the object $Z \otimes DM_i(F)$ is $A$-acyclic for any $i \in I$. Thus we have equivalences
\[
\Hom(Z,M_i(F) \otimes L_AX) \simeq \Hom(Z \otimes DM_i(F), L_AX) \simeq 0.
\]
This implies that $\Hom(Z, L_FL_AX) \simeq 0$, hence $L_FL_AX$ is $(F\otimes A)$-local as claimed.
\end{proof}

The $\infty$-category of $F$-torsion $A$-modules $\tMod_A$ is given as the localizing ideal in $\Mod_A(\cC)$ generated by $A \otimes F$, while the $\infty$-category $\Mod_{A}^{\comp}$ of $F$-complete $A$-modules in $\cC$ is by definition the essential image of the Bousfield localization $L_{A\otimes F}^{A}\colon \Mod_A \to \Mod_A$ with respect to $A \otimes F$ constructed in $\Mod_A$. The next result lifts local duality with respect to $F$ in $\cC$ to the corresponding module categories over $A$.

\begin{prop}\label{prop:ctelocalduality}
The localization functors induce symmetric monoidal equivalences
\[
\xymatrix{\tMod_{A} \ar[r]_-{\simeq}^-{L_{A\otimes F}^A} & \Mod_{A}^{\comp} \ar[r]_-{\simeq}^-{L_F} & \cMod_A.}
\]
\end{prop}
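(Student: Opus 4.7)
The plan is to handle the two symmetric monoidal equivalences separately, viewing the first as an instance of local duality internal to $\Mod_A$, and the second as the lift of a symmetric monoidal Bousfield localization to module categories.

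\textbf{First equivalence.} I would begin by verifying that the pair $(\Mod_A(\cC), A\otimes F)$ satisfies the hypotheses needed for the local duality equivalence of \Cref{prop:localdualitycontext}(6). The object $A\otimes F$ is compact in $\Mod_A$ because $A\otimes(-)\colon \cC\to\Mod_A$ is a left adjoint whose right adjoint (the forgetful functor) preserves filtered colimits, and $F$ is compact in $\cC$. The category $\Mod_A$ is presentably symmetric monoidal and compactly generated, so the local duality equivalence of~\cite{heardvalenzuela_localduality} applies and yields a symmetric monoidal equivalence $\Loc^{\otimes}(A\otimes F)\simeq \Mod_A^{\comp}$ via $L_{A\otimes F}^A$. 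It remains to identify the localizing ideal $\Loc^{\otimes}(A\otimes F)$ with $\tMod_A$, which is immediate from the definition of $\tMod_A$ as the localizing ideal generated by $A\otimes F$.

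\textbf{Comparison of the two notions of local objects.} The key lemma, which I would record separately, is that an $A$-module $M$ is $(A\otimes F)$-local in $\Mod_A$ if and only if its underlying object in $\cC$ is $F$-local. One direction uses that if $N \in \Mod_A$ satisfies $N \otimes F \simeq 0$ in $\cC$, then the forgetful functor identifies $\Map_A(N,M)$ with a limit of $\Map_{\cC}$'s, all vanishing when $M$ is $F$-local in $\cC$. The converse uses the adjunction $\Map_A(A\otimes X, M)\simeq \Map_{\cC}(X,M)$: for any $F$-acyclic $X\in\cC$, the $A$-module $A\otimes X$ is $(A\otimes F)$-acyclic since $(A\otimes X)\otimes_A (A\otimes F)\simeq A\otimes X\otimes F\simeq 0$, forcing $\Map_{\cC}(X,M)\simeq 0$.

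\textbf{Second equivalence.} Using the comparison above, $\Mod_A^{\comp}$ coincides with the full subcategory of $\Mod_A$ on those $A$-modules whose underlying object in $\cC$ lies in $\ccC$. Since $L_F\colon \cC\to\ccC$ is a symmetric monoidal Bousfield localization, $L_FA$ is naturally a commutative algebra in $\ccC$, and the general principle that symmetric monoidal localizations pass to module categories (cf. \cite[Theorem 4.8.2.10, Proposition 4.8.2.7]{ha}) provides a symmetric monoidal functor $L_F\colon \Mod_A(\cC)\to\Mod_{L_FA}(\ccC)=\cMod_A$ exhibiting $\cMod_A$ as the localization of $\Mod_A$ at the $F$-equivalences. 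Combining this with the identification of $F$-local and $(A\otimes F)$-local objects, the restriction of $L_F$ to $\Mod_A^{\comp}$ gives the desired symmetric monoidal equivalence with $\cMod_A$.

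\textbf{Main obstacle.} The routine steps are the two equivalences themselves, once set up. The point requiring genuine care is the comparison of localizations: establishing that completion with respect to $A\otimes F$ inside $\Mod_A$ agrees, on underlying objects in $\cC$, with $F$-completion in $\cC$. This is the bridge that allows one to use local duality in $\cC$ (encoded in \Cref{prop:localdualitycontext}) rather than having to redevelop the theory inside $\Mod_A$ from scratch, and is where the hypothesis that $A$ is an $\mathbb{E}_\infty$-algebra (and not just an $A_\infty$- or associative algebra) enters implicitly through the symmetric monoidality of the adjunction $A\otimes(-)\dashv \text{forget}$.
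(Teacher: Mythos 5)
Your overall plan is sound and is a genuinely different route from the paper's, so let me compare. For the first equivalence you and the paper both invoke local duality internally to $\Mod_A$, so nothing to say there. For the second equivalence the strategies diverge. Your key lemma — that an $A$-module is $(A\otimes F)$-local in $\Mod_A$ if and only if its underlying object in $\cC$ is $F$-local — is correct: one direction follows by writing $\Map_A(N,M)$ as a totalization of $\Map_{\cC}(A^{\otimes\bullet}\otimes N,M)$ and using that $N\otimes F\simeq 0$ forces $A^{\otimes\bullet}\otimes N$ to be $F$-acyclic, while the converse uses the free-forgetful adjunction exactly as you say. With this lemma in hand, you reduce the second equivalence to the statement that $L_F\colon \Mod_A(\cC)\to\Mod_{L_FA}(\ccC)$ is a Bousfield localization whose local objects are the $F$-local $A$-modules. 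The paper avoids appealing to any such general principle: it instead constructs $\widetilde{L_F}\colon L_{A\otimes F}^A\Mod_A(\cC)\to\Mod_{L_FA}(L_F\cC)$ directly from the universal property of the localization, upgrades it to a symmetric monoidal functor via Hinich, and then proves it is an equivalence by checking essential surjectivity and fully faithfulness on the compact generators $A\otimes F\otimes\Pic(\cC)$, using the equivalence $F\otimes A\otimes P\simeq L_FA\cotimes(F\otimes P)$ coming from \Cref{prop:localdualitycontext}(2). So the paper's argument is more computational but entirely self-contained at this step.

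Two cautionary notes on your version. First, the Lurie citations you give (HA 4.8.2.7, 4.8.2.10) concern the behavior of the $\Mod$-functor on $\CAlg(\Pr^L)$ and are not, as stated, the statement ``symmetric monoidal Bousfield localizations of $\cC$ induce Bousfield localizations of $\Mod_A(\cC)$''; that assertion is true but would need either a more precise reference or a short argument (e.g.\ a cobar computation showing that the forgetful functor $\Mod_{L_FA}(\ccC)\to\Mod_A(\cC)$ is fully faithful with image the $F$-local $A$-modules, which is essentially your key lemma run in reverse). Second, your remark at the end that the $\mathbb{E}_\infty$-structure on $A$ ``enters implicitly through the symmetric monoidality of the adjunction'' slightly misidentifies where commutativity is used: it is needed to have a symmetric monoidal structure on $\Mod_A$ at all (so that $A\otimes F$ generates a localizing \emph{ideal} and the localization is symmetric monoidal), not to make the free-forgetful adjunction work. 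If you fill in the justification of the localization-passes-to-modules step, the proof is complete.
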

\begin{proof}
The first symmetric monoidal equivalence is an instance of local duality, see \Cref{prop:localdualitycontext}(6). Bousfield localization at $F$ induces a symmetric monoidal functor between module categories $L_F\colon \Mod_A(\cC) \to \Mod_{L_FA}(L_F\cC)$. We claim that this functor annihilates $L_{A\otimes F}^A$-acyclic $A$-modules. Indeed, let $M \in \Mod_A(\cC)$ be $A\otimes F$-acyclic, then $0 \simeq M \otimes_A(A\otimes F) \simeq M \otimes F$. We thus obtain a factorization
\[
\xymatrix{\Mod_A(\cC) \ar[r]^-{L_F} \ar[d]_-{L_{A\otimes F}^A} & \Mod_{L_FA}(L_F\cC) \\
L_{A\otimes F}^A\Mod_A(\cC). \ar@{-->}[ru]_{\widetilde{L_F}}}
\] 
The dashed functor $\widetilde{L_F}$ is symmetric monoidal by \cite[Proposition 3.2.2]{Hinich}, and it remains to show that $\widetilde{L_F}$ is an equivalence. 

Note that $L_{A\otimes F}^A\Mod_A(\cC)$ is compactly generated by the $\infty$-groupoid $(A \otimes F) \otimes_A (A \otimes \Pic(\cC) \simeq A \otimes F \otimes \Pic(\cC)$, while $\Mod_{L_FA}(L_F\cC)$ is compactly generated by $L_FA \cotimes (F \otimes \Pic(\cC))$. Since $\widetilde{L_F}$ induces an essentially surjective functor between these $\infty$-groupoids, it suffices to show that $\widetilde{L_F}$ is fully faithful. To this end, first consider for fixed $N \in L_{A\otimes F}^A\Mod_A(\cC)$ the full subcategory $\cS(N)$ spanned by all $M \in L_{A\otimes F}^A\Mod_A(\cC)$ such that the map 
\[
\xymatrix{\Hom_{L_{A\otimes F}^A\Mod_A(\cC)}(M,N) \ar[r] & \Hom_{\Mod_{L_FA}(L_F\cC)}(\widetilde{L_F}M,\widetilde{L_F}N)}
\]
induced by $\widetilde{L_F}$ is an equivalence. Because $\widetilde{L_F}$ preserves all colimits, $\cS(N)$ is closed under colimits, so if $F \otimes A \otimes \Pic(\cC)\subset \cS(N)$, then $L_{A\otimes F}^A\Mod_A(\cC) = \cS(N)$. Because the objects in $F \otimes A \otimes \Pic(\cC)$ are compact, both mapping spectra preserve colimits in the second variable when $M$ is in $F \otimes A \otimes \Pic(\cC)$. Thus we can reduce to proving the claim for $N \in F \otimes A \otimes \Pic(\cC)$. In other words, it suffices to show that the restriction of $\widetilde{L_F}$ to $F \otimes A \otimes \Pic(\cC)$ is fully faithful.

This is now a consequence of the fact that, for any $P \in \Pic(\cC)$, $F \otimes A \otimes P \simeq L_FA \cotimes (F \otimes P)$ in $\cC$ by \Cref{prop:localdualitycontext}(2): for any $P,P' \in \Pic(\cC)$, we have natural equivalences
\begin{align*}
\Hom_{L_{A\otimes F}^A\Mod_A(\cC)}(A\otimes F \otimes P,A\otimes F \otimes P') & \simeq \Hom_{\cC}(F \otimes P,A\otimes F \otimes P') \\
& \simeq \Hom_{\cC}(F \otimes P,L_FA\cotimes F \otimes P')\\
& \simeq \Hom_{\Mod_{L_FA}(L_F\cC)}(L_FA\cotimes F \otimes P,L_FA\cotimes F \otimes P').
\end{align*}
It follows that $\widetilde{L_F}\colon L_{A\otimes F}^A\Mod_A(\cC) \to \Mod_{L_FA}(L_F\cC)$ is a symmetric monoidal equivalence, as claimed.
\end{proof}

\subsection{Examples}\label{ssec:ex}

In this subsection, we exhibit some special features of the two examples we will study in the present paper and use them to deduce a result about Picard groups of module categories.

Suppose $(\cC,F)$ is a local duality context with $\cC$ the full subcategory of the $\infty$-category $\Sp$ of spectra consisting of local objects with respect to a $\otimes$-localization, in particular, the inclusion $C \hookrightarrow \Sp$ is lax monoidal. Let $A \in \CAlg(\cC)$ and assume additionally that the triple $(\cC,F,A)$ satisfies the following conditions:
\begin{enumerate}
	\item $A \in \cC$ is full, i.e., $A \otimes -\colon \cC \to \cC$ is conservative.
	\item $A$ is $F$-complete, i.e., the canonical map $A \to L_FA$ is an equivalence. 
	\item $A$ considered as an $\mathbb{E}_{\infty}$-ring spectrum is even periodic and $\pi_0A$ is a Noetherian complete regular local ring. Furthermore, there exists an ideal $J \subseteq \pi_0A$ which is generated by a regular sequence $(x_1,\ldots,x_n)$ and such that $\kappa(A) := A\otimes F \simeq A/J$ in $\cC$.
\end{enumerate}

Under these assumptions, we get the following result:

\begin{lem}\label{lem:completepicgp}
Bousfield localization at $F$  induces a canonical isomorphism
\[
\xymatrix{\pi_0\Pic(\Mod_{A}) \ar[r]^-{\simeq} & \pi_0\Pic(\cMod_{A}),}
\]
of abelian groups. 
\end{lem}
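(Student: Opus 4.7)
The plan is to verify that the symmetric monoidal Bousfield localization $L_F$ induces the claimed isomorphism by establishing injectivity and surjectivity separately, with the main work concentrated in a derived Nakayama argument that reduces to the residue field $\kappa(A)$.

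First, I would observe that since $L_F\colon \cC \to \ccC$ is symmetric monoidal, it induces a symmetric monoidal functor $\Mod_A \to \cMod_A$ that sends invertibles to invertibles, giving a group homomorphism $\pi_0\Pic(\Mod_A) \to \pi_0\Pic(\cMod_A)$. The key observation for injectivity is that every invertible $A$-module is automatically $F$-complete: if $M \in \Pic(\Mod_A)$, then $M$ is dualizable and hence compact in $\Mod_A$, so $M \in \Thick(A \otimes \Pic(\cC))$; each $A \otimes P$ with $P \in \Pic(\cC)$ is $F$-complete because tensoring with invertibles of $\cC$ preserves both $F$-local and $F$-acyclic objects, while $A$ is $F$-complete by hypothesis~(2). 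Since $F$-complete modules form a thick subcategory, every $M \in \Pic(\Mod_A)$ is $F$-complete, and the unit $M \to L_F M$ is an equivalence. Under the identification $\cMod_A \simeq \Mod_A^{\comp}$ of \cref{prop:ctelocalduality}, the map on Picard groups thus factors through the fully faithful inclusion $\Mod_A^{\comp} \hookrightarrow \Mod_A$, so it is injective on $\pi_0$.

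For surjectivity, given $N \in \Pic(\cMod_A)$, I would reduce modulo the regular sequence $(x_1, \ldots, x_n)$ generating $J$ by forming the iterated cofiber $N/J$ inside $\Mod_A$. Since $\pi_*\kappa(A) = (\pi_0 A / J)[u^{\pm 1}]$ is a graded field by condition~(3), the $\pi_*\kappa(A)$-module $\pi_*(N/J)$ is free of rank one and concentrated in a single parity $i \in \{0,1\}$. A choice of generator lifts to a map $f\colon \Sigma^i A \to N$ in $\Mod_A$ whose mod-$J$ reduction is an equivalence. A derived Nakayama argument---using that any $F$-complete $A$-module whose reduction modulo $J$ vanishes is itself zero---then shows that $L_F f$ is an equivalence in $\cMod_A$. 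Combining this with $L_F(\Sigma^i A) \simeq \Sigma^i A$ (from the first paragraph), we conclude $N \simeq L_F(\Sigma^i A)$ with $\Sigma^i A \in \Pic(\Mod_A)$, establishing surjectivity.

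The main obstacle is making the Nakayama lift rigorous: producing the map $f$ (rather than merely a class in $\pi_i(N/J)$) and then verifying that the fiber of $f$, after $F$-localization, vanishes. Both steps rely on the regularity of $(x_1,\ldots,x_n)$ together with the $F$-completeness supplied by conditions~(2) and~(3), which is what forces the relevant $\lim^1$ and $\Ext$ obstructions along the tower $\{A/(x_1,\ldots,x_k)\}$ to vanish. A potential simplification would be to argue directly on the Picard spectrum, but the version sketched above has the advantage of only using the structural features of $(\cC,F,A)$ already recorded in \cref{prop:localdualitycontext} and \cref{prop:ctelocalduality}.
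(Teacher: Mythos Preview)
Your injectivity argument coincides with the paper's: invertible $A$-modules are compact, hence lie in $\Thick(A\otimes\Pic(\cC))$, which consists of $F$-complete objects since $A$ is $F$-complete and twisting by invertibles of $\cC$ preserves $F$-locality.

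For surjectivity the paper takes a different, shorter route. Rather than running a direct Nakayama argument, it invokes \cite[Proposition~10.11]{mathew_galois} to conclude that any $P\in\Pic(\cMod_A)$ already lies in $\Thick(A)\subset\Mod_A$. Once this is known, $P\otimes_A P^{-1}$ is $F$-complete (being in $\Thick(A)$), so
\[
P\otimes_A P^{-1}\simeq L_F(P\otimes_A P^{-1})\simeq P\cotimes_A P^{-1}\simeq L_FA\simeq A,
\]
and $P$ is invertible in $\Mod_A$ with $\phi(P)=P$. Your approach instead \emph{reproves} the content of Mathew's proposition by hand: reducing mod $J$ to a graded field, identifying $N/J\simeq\Sigma^i\kappa(A)$, lifting a generator, and applying derived Nakayama. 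This is a legitimate and more self-contained route, and your Nakayama step is clean (if $M$ is $F$-complete with $M\otimes F\simeq M\otimes_A A/J\simeq 0$ then $M$ is $F$-acyclic and $F$-complete, hence zero).

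The one place your sketch is thin is exactly where you flag it: producing the lift $f\colon\Sigma^iA\to N$ hitting the generator of $\pi_i(N/J)$. The phrase ``regularity forces the relevant $\lim^1$ and $\Ext$ obstructions to vanish'' is a placeholder rather than an argument. The standard way to fill this in is to show, by descending induction on $k$, that $\pi_*(N/(x_1,\ldots,x_k))$ is concentrated in a single parity: the cofiber sequence for $x_{k+1}$ shows that $x_{k+1}$ acts surjectively on the ``wrong'' parity of $\pi_*(N/(x_1,\ldots,x_k))$, and then $x_{k+1}$-completeness (inherited from $F$-completeness) forces that parity to vanish. Once $\pi_*N$ is concentrated in one parity, the long exact sequences split and the lift is immediate. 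With that paragraph added, your argument is complete and recovers Mathew's result in this setting; the paper simply chose to cite it.
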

\begin{proof}
Since Bousfield localization is symmetric monoidal, $L_F$ restricts to a homomorphism
\[
\xymatrix{\phi\colon\pi_0\Pic(\Mod_{A}) \ar[r] & \pi_0\Pic(\cMod_{A})}
\]
of abelian groups and it remains to show that $\phi$ is an isomorphism. The unit $A \in \Mod_{A}$  compact as $\mathbf{1}_{\cC} \in \cC^{\omega}$, so the dualizable and compact objects agree in $\Mod_{A}$; in particular, $\Pic(\Mod_{A}) \subset \Mod_{A}^{\omega}$. Moreover, $A$ is $F$-complete by assumption, so any $P \in \pi_0\Pic(\Mod_{A})$ is already $F$-local and hence $\phi$ is injective. Conversely, by Property (3) above and \cite[Proposition 10.11]{mathew_galois}, any $P \in \pi_0\Pic(\cMod_{A})$ is in $\Thick(A)$, hence we have equivalences 
\[
A \simeq L_FA \simeq P \cotimes_A P^{-1} \simeq P \otimes_A P^{-1}
\] 
in $\Mod_A$, which implies the surjectivity of $\phi$. 
\end{proof}

We now turn to the two main examples of this paper, starting with the topological one. Fix a prime $p$ and an integer $n\ge 0$, and let $\K$ be Morava $K$-theory of height $n$ and prime $p$ and write $\cSpnp = \Sp_{\K}$ for the stable $\infty$-category of $\K$-local spectra. Recall that a $p$-local compact spectrum $F$ is said to be of strict type $n$ if $\K \otimes F \ne 0$ and $K_p(n-1) \otimes F = 0$. For any $n$ and $p$, compact spectra of strict type $n$ exist at $p$: By the periodicity theorem of Hopkins and Smith, there exists a generalized Moore spectrum $M^I(n) = M_p^I(n)$ of type $n$ with Brown--Peterson homology
\[
BP_*M^I(n) \cong BP_*/(p^{i_0},v_1^{i_1},\ldots,v_{n-1}^{i_{n-1}})
\]
for an appropriate sequence $I=(i_0,i_1,\ldots,i_{n-1})$ of positive integers. In particular, strict type $n$ spectra $F$ with $2^n$ cells exist. 

\begin{rem}
In fact, for $p$ large enough with respect to $n$, $\E$-local Smith--Toda complexes exist by \cite[Theorem 6.10]{ultra1}, so may choose $L_nF(n)$ to have $BP$-homology $v_{n}^{-1}BP_*/I_n$. 
\end{rem}

Let $\E$ be Morava $E$-theory of height $n$ at the prime $p$, with coefficients $\pi_0\E = \mathbb{WF}_{p^n}\llbracket u_1,\ldots,u_{n-1}\rrbracket$ and associated category $\Spnp$ of $\E$-local spectra. The pair $(\cC,F) = (\Spnp, L_{\E}F(n))$ form a local duality context. The monochromatic category $\Mnp$ is defined as the localizing ideal of $\Spnp$ generated by $L_{\E}F(n)$; note that by the thick subcategory theorem, the definition of $\Mnp$ does not depends on the choice of $F(n)$. Local duality in the form of  \Cref{prop:localdualitycontext}(6) establishes a canonical symmetric monoidal equivalence 
\begin{equation}\label{eq:toplocalduality}
\xymatrix{\M\colon \cSpnp \ar@<0.5ex>[r] & \Mnp \noloc L_{\K} \ar@<0.5ex>[l],}
\end{equation}
where $\M$ denotes the monochromatic layer functor. 

Let $A=\E \in \CAlg(\Spnp)$. There is a natural equivalence $L_F \simeq L_{\K}$ by \Cref{prop:comparelocfun}, and the formula \Cref{prop:localdualitycontext}(4) takes the concrete form
\begin{equation}\label{eq:completionformula}
L_F(X) \simeq L_{\K}(X) \simeq \lim_IDM^I(n) \otimes X,
\end{equation}
where the limit is indexed on a cofinal sequence of integers as above and $X \in \Spnp$.

Conditions (1)--(3) above hold for $(\Spnp, L_{\E}F(n), \E)$. More generally, the terms appearing in the Amitsur complex of $L_{\E}S^0 \to \E$ satisfy a weak form of these conditions: 

\begin{prop}\label{prop:etheory}
Let $s \ge 1$, then $\K$-localization induces a canonical symmetric monoidal equivalence
\[
\xymatrix{\tMod_{\E^{\otimes s}} \ar[r]^-{\simeq} & \cMod_{\E^{\otimes s}}.}
\]
Furthermore, the $\bE_{\infty}$-ring spectrum $L_{\K}(\E^{\otimes s})$ is even periodic and $\pi_0L_{\K}(\E^{\otimes s})$ is complete with respect to the regular sequence $(p,u_1,\ldots,u_{n-1})$. If $s=1$, then $L_{\K}$ induces an isomorphism
\[
\xymatrix{\Z/2 \cong \pi_0\Pic(\Mod_{\E}) \ar[r]^-{\simeq} & \pi_0\Pic(\cMod_{\E}),}
\]
sending the generator of $\Z/2$ to $\Sigma \E$. 
\end{prop}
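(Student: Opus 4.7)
The plan is to prove the three assertions in turn, using the local duality machinery of \Cref{ssec:localduality} combined with classical input on $\K$-localizations of smash powers of Morava $\E$-theory.

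For the symmetric monoidal equivalence $\tMod_{\E^{\otimes s}} \simeq \cMod_{\E^{\otimes s}}$, I would directly apply \Cref{prop:ctelocalduality} to the local duality context $(\Spnp, L_{\E} F(n))$ with $A = \E^{\otimes s}$. To identify the $L_F$ appearing in that proposition with $L_{\K}$, I invoke \Cref{prop:comparelocfun}: since $\E$ is full in $\Spnp$ and full objects are closed under tensor products, $L_{\E^{\otimes s}\otimes F} \simeq L_F \simeq L_{\K}$, so the equivalence delivered by \Cref{prop:ctelocalduality} is precisely the one asserted.

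For the even periodicity and completeness of $L_{\K}(\E^{\otimes s})$, I would use the Devinatz--Hopkins identification
\[
\pi_* L_{\K}(\E^{\otimes s}) \cong \Map_c(G_n^{s-1}, \E_*),
\]
where $G_n$ denotes the extended Morava stabilizer group. Since $\E_*$ is concentrated in even degrees, so is this module of continuous functions, which gives even periodicity. For the completeness statement, the formula \eqref{eq:completionformula} presents $L_{\K}(\E^{\otimes s})$ as the inverse limit of its smash products with duals of generalized Moore spectra $M^I(n)$; passing to $\pi_0$ and using cofinality of the $M^I(n)$-filtration with the $\fm$-adic filtration for $\fm = (p, u_1, \ldots, u_{n-1})$ shows that $\pi_0 L_{\K}(\E^{\otimes s})$ is $\fm$-adically complete, and regularity of the sequence follows from the fact that the Devinatz--Hopkins formula makes the reduction $\pi_0 L_{\K}(\E^{\otimes s})/\fm \cong \Map_c(G_n^{s-1}, \pi_0\E/\fm)$ faithfully flat over the field $\pi_0\E/\fm$.

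For the Picard group statement with $s = 1$, I would apply \Cref{lem:completepicgp} to $(\Spnp, L_{\E} F(n), \E)$. The three hypotheses are classical: $\E$ is full in $\Spnp$; $\E$ is $\K$-complete because $\E$ is itself $\K$-local; and $\E$ is even periodic with $\pi_0\E$ Noetherian complete regular local, maximal ideal $\fm$ generated by the regular sequence $(p, u_1, \ldots, u_{n-1})$, and $\kappa(\E) = \E\otimes L_{\E} F(n) \simeq \E/\fm$ up to suspension (by choosing $F(n)$ to be a generalized Moore spectrum $M^I(n)$). \Cref{lem:completepicgp} then gives $\pi_0\Pic(\Mod_\E) \cong \pi_0\Pic(\cMod_\E)$, and the left-hand group is $\Z/2$ generated by $\Sigma\E$ by the standard Picard group computation for even periodic $\bE_\infty$-ring spectra with Noetherian complete regular local $\pi_0$ (compare \cite{mathew_galois}): $\pi_0\Pic(\Mod_\E)$ decomposes as the sum of the $\Z/2$ contribution from the shift modulo the $2$-periodicity of $\E$ and the classical Picard group of $\pi_0\E$, which vanishes since $\pi_0\E$ is local. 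The main obstacle is the fine regularity and completeness analysis for $\pi_0 L_{\K}(\E^{\otimes s})$ when $s > 1$, which relies on the Devinatz--Hopkins description of smash powers of $\E$; I would cite this rather than rederive it.
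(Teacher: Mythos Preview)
Your argument is correct and tracks the paper's proof closely for the first and third assertions: both invoke \Cref{prop:ctelocalduality} for the equivalence and \Cref{lem:completepicgp} for the Picard computation, the only cosmetic difference being that the paper cites Baker--Richter \cite{bakerrichter_inv} for $\pi_0\Pic(\Mod_{\E}) \cong \Z/2$ whereas you sketch it via \cite{mathew_galois}.

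For the middle assertion the two arguments diverge slightly. The paper reduces to $s=2$ using the formula $\pi_*L_{\K}(\E^{\otimes s}) \cong (\pi_*L_{\K}(\E^{\otimes 2}))^{\widehat{\boxtimes}_{\pi_*\E}(s-1)}$ from \cite{bh_ass} and then invokes Hovey's description of $E_*^{\vee}E$ as continuous $\pi_*\E$-valued functions on the stabilizer group; you instead appeal directly to the Devinatz--Hopkins identification $\pi_*L_{\K}(\E^{\otimes s}) \cong \Map_c(G_n^{s-1},\E_*)$ for arbitrary $s$. These are equivalent inputs, so either route is fine. One small imprecision: your regularity justification via ``faithfully flat over the field $\pi_0\E/\fm$'' does not actually establish regularity, since every module over a field is flat. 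The cleaner observation is that $\Map_c(G_n^{s-1},\pi_0\E)$ is pro-free over $\pi_0\E$, so any regular sequence in $\pi_0\E$ remains regular on it; this is implicit in the references both you and the paper cite.
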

\begin{proof}
The first claim is a direct consequence of \Cref{prop:ctelocalduality}. Next, we note that the statement about $L_{\K}(\E^{\otimes s})$ holds for $s=1$. Write $\boxtimes$ for the underived tensor product  and $-\widehat{\boxtimes}-=(-\boxtimes-)_{\fm}^{\wedge}$ for the $\fm$-completed tensor product of $\pi_*E$-modules. By \cite[Corollary 1.24]{bh_ass} and for any $s\ge 1$ there is an isomorphism 
\[
\pi_*L_{\K}(\E^{\otimes s}) \cong (\pi_*L_{\K}(\E^{\otimes 2}))^{\boxtimes_{\pi_*\E} (s-1),}
\]
reducing the proof to the case $s=2$. Recall that $\pi_*L_{\K}(\E^{\otimes 2}) \cong E_*^{\vee}E$ is isomorphic to the algebra of $\pi_*\E$-valued continuous functions on the Morava stabilizer group, see \cite{hoveyopetheory}. In particular, $L_{\K}(\E^{\otimes 2})$ is even periodic, $\pi_0$ is complete, and $(p,u_1,\ldots,u_{n-1})$ is a regular sequence.

The final claim follows from \Cref{lem:completepicgp} and \cite{bakerrichter_inv}. 
\end{proof}

On the algebraic side, the role of the category of $\E$-local spectra is played by Franke's category $\cC = \Frnp$, a symmetric monoidal $\infty$-categorical version of which has been constructed and studied in our earlier paper~\cite{ultra1}. We refer to Sections 4.3 and 5.4 of \emph{loc.~cit.} for the main properties of this category. Note that, in particular, $\Frnp$ is monogenic. The analogue of $L_{\E}F(n)$ is given by the periodization $F=\underline{P}(E_0/I_n)$ of the quotient $E_0/I_n = E_0/(p,\ldots,u_{n-1})$, while we take $\A \in \CAlg(\Frnp)$ to be $\underline{P}(E_0E)$. Here $\underline{P}$ is the quasi-periodization functor of \cite[Corollary 5.31]{ultra1}. The pair $(\Frnp, \underline{P}(E_0/I_n))$ forms a local duality context, because $E_0/I_n$ is a finitely presented $E_0E$-comodule. We write $\cFrnp$ for the corresponding completed version of Franke's category and $\tFrnp = \Loc^{\otimes}(\underline{P}(E_0/I_n))$ for the algebraic analogue of the monochromatic category $\Mnp$. As in the topological context, local duality \Cref{prop:localdualitycontext}(6) produces a canonical pair of mutually inverse symmetric monoidal equivalences
\begin{equation}\label{eq:alglocalduality}
\xymatrix{\tFrnp \ar@<0.5ex>[r] & \cFrnp. \ar@<0.5ex>[l]}
\end{equation}
As in \cite{ultra1}, we can employ Morita theory to compare the algebraic and topological sides. Recall that, for any spectrum $M \in \Sp$, we write $M_{\star} = H\pi_*M$; this construction can be extended to a lax symmetric monoidal functor $(-)_{\star}\colon \Sp \to \Mod_{H\Z}$. With the above notation, the argument of \cite[Lemma 5.34]{ultra1} provides a symmetric monoidal equivalence 
\begin{equation}\label{eq:frmorita}
\xymatrix{\Psi_s\colon \Mod_{\A^{\otimes s}}(\Frnp) \ar[r]^-{\simeq} & \Mod_{(\E^{\otimes s})_{\star}}(\Mod_{H\Z})\simeq \Mod_{(\E^{\otimes s})_{\star}}(\Sp)}
\end{equation}
for any $s\ge 1$. We now investigate how the functor $\Psi_s$ interacts with torsion and complete objects. To this end, let $\cMod_{\A^{\otimes s}} = \Mod_{L_F(\A^{\otimes s})}(L_F\Frnp)$ and $\tMod_{\A^{\otimes s}}(\Frnp)$ be the localizing ideal in $\Mod_{\A^{\otimes s}}(\Frnp)$ generated by $\A^{\otimes s} \otimes F$, as in \Cref{ssec:localduality}. Furthermore, $\tMod_{(\E^{\otimes s})_{\star}}$ is defined to be the localizing ideal in $\Mod_{(\E^{\otimes s})_{\star}}$ generated by $(\E^{\otimes s})_{\star}/(I_n)_{\star}$, i.e., by the $(I_n)_{\star}$-torsion objects. Here, we use the notation $(I_n)_{\star}$ to indicate the algebraic analogue of the ideal $I_n$, so that $(\E)_{\star}/(I_n)_{\star} \cong (\E)_{\star} \otimes E_0/I_n$.

\begin{prop}\label{prop:frtorsionmorita}
For any $s\ge 1$, there is an equivalence $\cMod_{\A^{\otimes s}} \simeq \tMod_{(\E^{\otimes s})_{\star}}$ of symmetric monoidal $\infty$-categories.
\end{prop}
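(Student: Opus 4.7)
The proof decomposes into two steps following the structure of the definitions. First, I would apply Proposition \ref{prop:ctelocalduality} to the local duality context $(\Frnp, F)$ with $F = \underline{P}(E_0/I_n)$, together with the commutative algebra $\A^{\otimes s} \in \CAlg(\Frnp)$; this is permitted because the paragraph preceding the statement already verifies that $(\Frnp, F)$ is a local duality context. The output is a symmetric monoidal equivalence
\[
\tMod_{\A^{\otimes s}}(\Frnp) \xrightarrow{\ \simeq\ } \cMod_{\A^{\otimes s}}.
\]
Thus it suffices to construct a symmetric monoidal equivalence $\tMod_{\A^{\otimes s}}(\Frnp) \simeq \tMod_{(\E^{\otimes s})_{\star}}$.

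For the second step, I would transport the torsion subcategory along the Morita equivalence of \eqref{eq:frmorita}. Since $\Psi_s \colon \Mod_{\A^{\otimes s}}(\Frnp) \xrightarrow{\simeq} \Mod_{(\E^{\otimes s})_{\star}}$ is a symmetric monoidal equivalence of presentable stable $\infty$-categories, it sends localizing tensor ideals to localizing tensor ideals, and in particular
\[
\Psi_s\bigl(\Loc^{\otimes}(\A^{\otimes s} \otimes F)\bigr) \simeq \Loc^{\otimes}\bigl(\Psi_s(\A^{\otimes s} \otimes F)\bigr).
\]
The problem therefore reduces to showing that $\Psi_s(\A^{\otimes s} \otimes F)$ generates the same localizing tensor ideal in $\Mod_{(\E^{\otimes s})_{\star}}$ as $(\E^{\otimes s})_{\star}/(I_n)_{\star}$.

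To identify this image, I would write $\A^{\otimes s} \otimes F \simeq \A^{\otimes s} \otimes_\A (\A \otimes F)$, presenting it as the extension of scalars of the $\A$-module $\A \otimes F$ along the unit map $\A \to \A^{\otimes s}$. Naturality of the Morita equivalence of \cite[Lemma 5.34]{ultra1} in the algebra then reduces the computation to the case $s=1$. By symmetric monoidality of the periodization functor $\underline{P}$ one has $\A \otimes F \simeq \underline{P}(E_0 E) \otimes \underline{P}(E_0/I_n) \simeq \underline{P}((E_0 E)/I_n)$, and unwinding the construction of $\Psi_1$ identifies its image with $(\E)_{\star}/(I_n)_{\star}$. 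Extending scalars yields
\[
\Psi_s(\A^{\otimes s} \otimes F) \simeq (\E^{\otimes s})_{\star} \otimes_{(\E)_{\star}} (\E)_{\star}/(I_n)_{\star} \simeq (\E^{\otimes s})_{\star}/(I_n)_{\star},
\]
which is precisely the generator of $\tMod_{(\E^{\otimes s})_{\star}}$.

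The main obstacle will be the explicit identification carried out in the final paragraph: one must unwind both the periodization $\underline{P}$ and the construction of $\Psi_s$ carefully enough to see that $\Psi_1(\A \otimes F) \simeq (\E)_{\star}/(I_n)_{\star}$, and to confirm the base-change compatibility needed to promote this from $s=1$ to general $s$. Once this bookkeeping is in place, the result follows formally from local duality (Proposition \ref{prop:ctelocalduality}) combined with the Morita equivalence \eqref{eq:frmorita}.
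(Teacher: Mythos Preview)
Your proposal is correct and follows the same two-step strategy as the paper: first apply \Cref{prop:ctelocalduality} to get $\cMod_{\A^{\otimes s}} \simeq \tMod_{\A^{\otimes s}}(\Frnp)$, then transport along the Morita equivalence \eqref{eq:frmorita}, reducing everything to the identification $\Psi_s(\A^{\otimes s}\otimes F) \simeq (\E^{\otimes s})_{\star}/(I_n)_{\star}$.

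The only difference lies in how that identification is carried out. You reduce to $s=1$ by writing $\A^{\otimes s}\otimes F$ as a base change of $\A\otimes F$ and invoking naturality of the Morita equivalence in the algebra, then appeal to symmetric monoidality of $\underline{P}$. The paper instead computes $\Psi_s(\A^{\otimes s}\otimes F)$ directly for arbitrary $s$ as the mapping spectrum $\Hom_{\Mod_{\A^{\otimes s}}(\Frnp)}(\A^{\otimes s}, \A^{\otimes s}\otimes F)$ and unwinds this via adjunction and the description of $UP(E_0)$, in the style of \cite[Lemma 5.34]{ultra1}. Your route is more conceptual and avoids the explicit chain of Hom manipulations, at the cost of needing to justify the base-change compatibility of $\Psi_s$ and the monoidality of $\underline{P}$ (the latter uses flatness of $E_0E$ over $E_0$); the paper's route is more self-contained but heavier on bookkeeping. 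Either approach is fine.
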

\begin{proof}
The desired equivalence is a composite of the following two symmetric monoidal equivalences:
\[
\xymatrix{\cMod_{\A^{\otimes s}} \ar[r]^-{\simeq} & \tMod_{\A^{\otimes s}}(\Frnp) \ar[r]^-{\simeq} & \tMod_{(\E^{\otimes s})_{\star}}.}
\]
Indeed, applying \Cref{prop:ctelocalduality} to the triple $(\Frnp,P(E_0/I_n),\A^{\otimes s})$ yields the first symmetric monoidal equivalence, while the second one results from \eqref{eq:frmorita} together with the equivalence $\Psi_s(\A^{\otimes s} \otimes F) \simeq (\E^{\otimes s})_{\star}/I_n$. This last equivalence is obtained from the equivalences of $H\Z$-modules:
\begin{align*}
\Psi_s(\A^{\otimes s} \otimes F) & \simeq \Hom_{\Mod_{\A^{\otimes s}}(\Frnp)}(\A^{\otimes s}, \A^{\otimes s} \otimes F) \\
& \simeq \Hom_{\Frnp}(P(E_0), \A^{\otimes s} \otimes F) \\
& \simeq \Hom_{\Ch\Mod_{E_0}}(E_0, UP(E_0) \otimes (E_0E)^{\otimes s-1} \otimes E_0/I_n) \\
& \simeq UP(E_0) \otimes (E_0E)^{\otimes s-1} \otimes E_0/I_n \\
& \simeq (\E)_{\star} \otimes (\E^{\otimes s})_0 \otimes E_0/I_n \\
& \simeq (\E^{\otimes s})_{\star}/(I_n)_{\star},
\end{align*}
which is proven as in \cite[Lemma 5.34]{ultra1}.
\end{proof}

We now establish the analogue of the Picard group computation of \Cref{prop:etheory}. 

\begin{cor}\label{cor:frmodpicard}
Bousfield localization at $F=P(E_0)$ induces an isomorphism
\[
\xymatrix{\Z/2 \cong \pi_0\Pic(\Mod_{\A}(\Frnp)) \ar[r]^-{\simeq} & \pi_0\Pic(\cMod_{\A}),}
\]
sending the generator of $\Z/2$ to $\Sigma \A$. 
\end{cor}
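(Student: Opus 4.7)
The plan is to adapt the strategy of \Cref{lem:completepicgp}, transported via the Morita-theoretic equivalences established earlier in the section. First, the equivalence $\Psi_1$ of \eqref{eq:frmorita} gives a symmetric monoidal equivalence $\Mod_{\A}(\Frnp) \simeq \Mod_{(\E)_{\star}}$, and by \Cref{prop:frtorsionmorita} applied with $s=1$ this restricts to $\cMod_{\A} \simeq \tMod_{(\E)_{\star}}$, which \Cref{prop:ctelocalduality} further identifies with the category $\Mod_{(\E)_{\star}}^{\comp}$ of $(I_n)_{\star}$-complete $(\E)_{\star}$-modules. A $\pi_{*}$-calculation in the style of the proof of \Cref{prop:frtorsionmorita} shows $\Psi_{1}(\Sigma\A) \simeq \Sigma(\E)_{\star}$. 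The problem thus reduces to showing that Bousfield completion induces an isomorphism
\[
\phi \colon \pi_0\Pic(\Mod_{(\E)_{\star}}) \xrightarrow{\simeq} \pi_0\Pic(\Mod_{(\E)_{\star}}^{\comp}),
\]
whose source is $\Z/2$ generated by $\Sigma(\E)_{\star}$.

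The $\mathbb{E}_{\infty}$-ring $(\E)_{\star} = H\pi_{*}\E$ is even periodic, its $\pi_0$ is the Noetherian complete regular local ring $\pi_0\E$, and $(I_n)_{\star}$ is generated by the regular sequence $(p, u_1, \ldots, u_{n-1})$. These are exactly the conditions required to run the argument of \Cref{lem:completepicgp} verbatim, whose essential input is Mathew's \cite[Proposition 10.11]{mathew_galois}. Completion is symmetric monoidal, producing the homomorphism $\phi$. Since $\pi_{*}\E$ is $I_{n}$-complete, the unit $(\E)_{\star}$ is already $(I_n)_{\star}$-complete, hence any invertible $P$ in $\Mod_{(\E)_{\star}}$ is automatically complete, yielding injectivity of $\phi$. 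For surjectivity, Mathew's result applies to $(\E)_{\star}$ to show any $P \in \pi_0\Pic(\Mod_{(\E)_{\star}}^{\comp})$ lies in $\Thick((\E)_{\star})$; then the chain of equivalences $(\E)_{\star} \simeq P \cotimes_{(\E)_{\star}} P^{-1} \simeq P \otimes_{(\E)_{\star}} P^{-1}$ exhibits $P$ as invertible in the uncompleted category.

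It remains to identify $\pi_0\Pic(\Mod_{(\E)_{\star}}) \cong \Z/2$ generated by $\Sigma(\E)_{\star}$. This is a graded analogue of Baker--Richter's \cite{bakerrichter_inv} computation of $\pi_0\Pic(\Mod_{\E}) \cong \Z/2$, and the same argument applies: the graded Picard group of the regular complete local graded ring $\pi_{*}\E$ is generated by degree shifts, and the unit $u \in \pi_2(\E)_{\star}$ forces $\Sigma^{2}(\E)_{\star} \simeq (\E)_{\star}$, collapsing this to $\Z/2$.

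The main obstacle will be the verification that the local duality structure transports correctly through $\Psi_1$, so that torsion and completion in $\Mod_{\A}(\Frnp)$ match $(I_n)_{\star}$-torsion and $(I_n)_{\star}$-completion in $\Mod_{(\E)_{\star}}$, together with the identification $\Psi_{1}(\Sigma\A) \simeq \Sigma(\E)_{\star}$; both amount to graded $\pi_{*}$-level checks in the spirit of the proof of \Cref{prop:frtorsionmorita}.
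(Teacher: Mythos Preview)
Your proposal is correct and follows essentially the same route as the paper's own proof: transport the question through the Morita equivalence $\Psi_1$ to $\Mod_{(\E)_{\star}}$, identify $\cMod_{\A}$ with the $(I_n)_{\star}$-complete (equivalently torsion) modules there via \Cref{prop:ctelocalduality} and \Cref{prop:frtorsionmorita}, then invoke the argument of \Cref{lem:completepicgp} using that $(\E)_{\star}$ is even periodic with complete regular local $\pi_0$, and finally appeal to the Baker--Richter computation (recorded in \cite[Lemma~5.5]{ultra1}) for the identification with $\Z/2$. The only cosmetic difference is that the paper phrases the middle step directly as $\cMod_{\A} \simeq L_{\A\otimes F}\Mod_{\A}(\Frnp) \simeq L_{(\E)_{\star}/(I_n)_{\star}}\Mod_{(\E)_{\star}}$ rather than passing through the torsion category first.
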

\begin{proof}
By the proof of \Cref{prop:frtorsionmorita}, $\Psi_1$ maps $\A \otimes F$ to $(\E)_{\star}/(I_n)_{\star}$, so \Cref{prop:ctelocalduality} and \eqref{eq:frmorita} provide symmetric monoidal equivalences
\[
\xymatrix{\cMod_{\A} \ar[r]^-{\simeq} & L_{\A \otimes F}\Mod_{\A}(\Frnp) \ar[r]^-{\simeq} & L_{(\E)_{\star}/(I_n)_{\star}}\Mod_{(\E)_{\star}}.}
\]
The isomorphism 
\[
\pi_0\Pic(\Mod_{\A})(\Frnp) \cong \pi_0\Pic(\cMod_{\A})
\]
is thus a consequence of \Cref{lem:completepicgp} applied to $(\Mod_{(\E)_{\star}},(\E)_{\star}, (\E)_{\star}/(I_n)_{\star})$, since $(\E)_{\star}$ is complete with respect to $(\E)_{\star}/(I_n)_{\star}$. Finally, both groups are isomorphic to $\Z/2$ by \cite[Lemma 5.5]{ultra1}, which is essentially due to Baker and Richter \cite{bakerrichter_inv}. 
\end{proof}

\begin{rem}
Note that, by \Cref{prop:comparelocfun},  the localization functor $L_F$ on $\Frnp$ restricted to the category $\Mod_{\A}(\Frnp)$ is equivalent to the restriction of the functor $L_{\A \otimes F}$, which in turn transforms to Bousfield localization at $(\E)_{\star}/(I_n)_{\star}$ under Morita equivalence. 

For any $s \ge 1$, an argument similar to the one used in \Cref{prop:frtorsionmorita} shows that 
\[
\Psi_s(L_F(\A^{\otimes s})) \simeq (L_{\K}(\E^{\otimes s}))_{\star}.
\]
As an immediate consequence of \Cref{prop:etheory}, we see that the $\bE_{\infty}$-algebra $(L_{\K}(\E^{\otimes s}))_{\star}$ is even periodic and $\pi_0(L_{\K}(\E^{\otimes s}))_{\star}$ is complete with respect to the regular sequence $(p,u_1,\ldots,u_{n-1})$. However, we will not make use of these results in the remainder of this paper. 
\end{rem}

\section{Protoproducts} \label{ssec:four}

The category $\cSpnp$ has the property that $\cSpnp \simeq \Loc \Pic \cSpnp$ and it is compactly generated by the $\E$-localization of any finite type $n$ complex. However, unless $n=0$, the invertible objects in $\cSpnp$ are not compact and $\cSpnp$ is not equivalent to $\Ind \Thick \Pic \cSpnp$. This leads to various complications that we resolve by constructing a modified version of the protoproduct introduced originally in \cite{ultra1}.

\subsection{Modified Pic-generated protoproducts}\label{ssec:modpicproto}

Suppose $\cC$ is a compactly generated stable $\infty$-category and let $\cG \subseteq \cC$ be a collection of (not necessarily compact) generators of $\cC$ such that $\cC^{\omega} \subseteq \Thick(\cG)$. We filter the full subcategory $\Thick(\cG)$ of $\cC$ by $\cG$-cells up to retracts as in \cite[Section 3.5]{ultra1}, i.e., we define the $k$-th filtration category $\Cell_k$ to be the full subcategory of $\Thick(\cG)$ consisting of those objects which can be built in $k$ steps from $\cG$. We write $\Cell_*\Thick(\cG)$ for the resulting filtration and $(\cC^{\omega})_*$ and $(\Thick(\cG))_*$ for the constant filtrations on $\cC^{\omega}$ and $\Thick(\cG)$, respectively. 

By viewing these three filtrations as functors from $\N$ to $\Cat$, we may form a new filtration using the following homotopy pullback in $\Fun(\N,\Cat)$:
\[
\xymatrix{\cG\Cell_*(\cC^{\omega}) \ar[r] \ar[d] & \Cell_*\Thick(\cG) \ar[d] \\
(\cC^{\omega})_* \ar[r] & (\Thick(\cG))_*,}
\]
where the legs of the pullback diagram are the canonical inclusions. In other words, $\cG\Cell_*(\cC^{\omega})$ is the filtration obtained by intersecting the $\Cell_*$ filtration with the compact objects. We refer to $\cG\Cell_*(\cC^{\omega})$ as the $\cG$-cell filtration on $\cC^{\omega}$. Note that the filtrations $\cG\Cell_*(\cC^{\omega})$ and $(\cC^{\omega})_*$ in this diagram satisfy the conditions of a compact filtration given in \cite[Definition 3.31]{ultra1}. 

We now specialize to the case of interest in the remainder of the paper.

\begin{defn}\label{defn:picproto}
A set of objects $\cG \subset \cC$ is called a set of Pic-generators for a compactly generated presentably symmetric monoidal stable $\infty$-category $\cC$ if the following three conditions are satisfied:
	\begin{enumerate}
		\item $\cG$ is a subgroup of $\pi_0\Pic(\cC)$. In particular, the elements of $\cG$ are invertible.
		\item $\cG$ generates $\cC$ (i.e., $\Loc(\cG) = \cC$) and $\cC^{\omega} \subseteq \Thick(\cG)$. 
		\item $\cG$ is closed under suspensions and desuspensions.
	\end{enumerate}
When $\cG = \Pic(\cC)$, we denote the resulting $\cG$-cell filtration on $\cC^{\omega}$ by $\PicCell_*(\cC^{\omega})$, and call it the Pic-cell filtration. We say $\cC$ is Pic-generated if it admits a set of Pic-generators. 
\end{defn}

\begin{rem}
The unit in a compactly generated symmetric monoidal $\infty$-category is necessarily compact. The language above is somewhat unwieldly, but chosen carefully to distinguish from this case.
\end{rem}

\begin{defn}
Let $I$ be a set and let $(\cG_i)_{i \in I}$ be a collection of sets of Pic-generators for a collection of compactly generated presentably symmetric monoidal stable $\infty$-categories $(\cC_i)_{i \in I}$, i.e., for each $i\in I$, $\cG_i$ is a set of Pic-generators for $\cC_i$. The $\Pic$-generated protoproduct is defined to be
\[
\Prod{\cF}^{\Pic} (\cC_i,\cG_i) = \Prod{\cF}^{\flat}(\cC_i, \cG_i\Cell_*(\cC_i^{\omega})) = \Ind \Colim{d} \Prod{\cF}\cG_i\Cell_d(\cC_i^{\omega}).
\]
If $\cG_i = \Pic(\cC_i)$ for all $i$, then we write 
\[
\Prod{\cF}^{\Pic}\cC_i = \Prod{\cF}^{\Pic} (\cC_i,\PicCell_*(\cC_i^{\omega})).
\]
\end{defn}

\begin{ex}\label{ex:cellproto}
Besides the maximal example given by $\Pic(\cC)$ of a set of Pic-generators for a compactly generated presentably symmetric monoidal stable $\infty$-category $\cC$, there is also a minimal one, namely the set $\cG_{\cC}^{\cell}$ of all shifts of the unit of $\cC$. Let $\cG_{\cC}^{\cell}\Cell_*(\cC_i^{\omega})$ be the associated filtration. Let $(\cG_i^{\cell}=\cG_{\cC_i}^{\cell})_{i \in I}$ be a collection of sets of such generators for a collection of compactly generated presentably symmetric monoidal stable $\infty$-categories $(\cC_i)_{i \in I}$. As an instance of \Cref{defn:picproto}, we define the cell protoproduct as 
\[
\Prod{\cF}^{\flat}\cC_i = \Prod{\cF}^{\Pic} (\cC_i,\cG_i^{\cell}) = \Prod{\cF}^{\flat}(\cC_i, \cG_{i}^{\cell}\Cell_*(\cC_i^{\omega})) = \Ind \Colim{d} \Prod{\cF}\cG_{i}^{\cell}\Cell_d(\cC_i^{\omega}).
\]
This choice of notation is justified by the observation that in the case where the unit of $\cC_i$ is compact for almost all $i \in I$, the construction of the cell protoproduct specializes to the protoproduct as defined in \cite[Section 3.5]{ultra1}.
\end{ex}

Our goal will be to study these protoproducts. In particular, we will show that the protoproduct is symmetric monoidal and Pic-generated under a uniformity condition.

\begin{convention}
For the rest of this section and unless specified otherwise, $(\cC_i)_{i \in I}$ and $(\cD_i)_{i \in I}$ are collections of compactly generated presentably symmetric monoidal stable $\infty$-categories.
\end{convention}

\subsection{Multiplicative properties of $\Pic$-generated protoproducts}

The goal of this subsection is to study the monoidal properties of the $\Pic$-generated protoproduct construction for symmetric monoidal $\infty$-categories in which the unit is not necessarily compact. In order to find suitable conditions on a collection of $\infty$-categories to guarantee the existence of a monoidal unit in the $\Pic$-generated protoproduct, we need to work with a non-unital version $\bE_{\infty}^{\text{nu}}$ of the $\bE_{\infty}$ operad introduced and studied by Lurie in \cite[Section 5.4.4]{ha}.

\begin{defn}
A symmetric oidal $\infty$-category is an object in the $\infty$-category $\Alg_{\bE_{\infty}^{\text{nu}}}(\mathrm{Cat}_{\infty}^{\omega})$ and we refer to a map in this $\infty$-category as a symmetric oidal functor. If $\cC = (\cC,\otimes)$ is a symmetric oidal $\infty$-category, then the adjoint to $\otimes\colon \cC \times \cC \to \cC$ is a functor $\cC \to \Fun(\cC,\cC)$. By construction, this functor lands in the subcategory of colimit preserving functors and we refer to the resulting functor 
\[
\xymatrix{\Cayley\colon \cC \ar[r] & \Fun^L(\cC,\cC),}
\]
as the Cayley functor. Informally speaking, $\Cayley$ sends an object $x \in \cC$ to the colimit preserving functor $x \otimes -\colon \cC \to \cC$. 
\end{defn}

Just as there is a notion of a symmetric monoidal filtration of \cite[Definition 3.33]{ultra1}, there is an analogous notion of a symmetric oidal filtration on the compact objects in a symmetric monoidal $\infty$-category. Note that if $\cG$ is a set of $\Pic$-generators for a compactly generated presentably symmetric monoidal stable $\infty$-category $\cC$, then $\cG\Cell_*(\cC^{\omega})$ is a symmetric oidal filtration.

\begin{lem}\label{lem:nsymmmon}
Let $(\cG_i)_{i \in I}$ be a collection of sets of Pic-generators for $(\cC_i)_{i \in I}$, then the Pic-generated protoproduct
\[
\Prod{\cF}^{\Pic}(\cC_i,\cG_i)
\]
is a full symmetric oidal subcategory of $\Prod{\cF}^{\omega} \cC_i$ with lax symmetric oidal right adjoint $n$.
\end{lem}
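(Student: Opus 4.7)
The plan is to reduce the claim to two ingredients: (i) that the Pic-cell filtration $\cG_i\Cell_*(\cC_i^\omega)$ is a symmetric oidal filtration on $\cC_i^\omega$ for each index $i$, and (ii) that the cell-protoproduct of a symmetric oidal filtration embeds as a full symmetric oidal subcategory of $\Prod{\cF}^\omega \cC_i$. Granting these two facts, the symmetric oidal structure on $\Prod{\cF}^{\Pic}(\cC_i,\cG_i)$ will be inherited by construction and the inclusion will be symmetric oidal and fully faithful essentially tautologically.

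For ingredient (i), the decisive input is condition (1) of \Cref{defn:picproto}: since $\cG_i$ is a subgroup of $\pi_0\Pic(\cC_i)$, it is closed under tensor products of its elements. A straightforward double induction on cell degree, using that $\otimes$ distributes over cofiber sequences in each variable and that tensoring with any $g \in \cG_i$ preserves the property of being a retract of a $k$-cell object (because $g$ is invertible), should show that $x \in \cG_i\Cell_k$ and $y \in \cG_i\Cell_l$ imply $x \otimes y \in \cG_i\Cell_{kl}$. After an evident reindexing this will yield a symmetric oidal filtration in the sense of the non-unital analogue of \cite[Definition 3.33]{ultra1}. Ingredient (ii) is then a direct extension of the symmetric monoidal version established in \cite[Section 3.5]{ultra1}: each finite stage $\Prod{\cF}\cG_i\Cell_d(\cC_i^\omega)$ is naturally symmetric oidal, filtered colimits in $\Alg_{\bE_\infty^{\text{nu}}}(\Cat)$ exist and are computed on underlying $\infty$-categories, and $\Ind$ is lax symmetric monoidal on compactly generated $\infty$-categories, so the whole composite lands in $\Alg_{\bE_\infty^{\text{nu}}}(\Catomega)$ with the required compatibility.

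For the lax symmetric oidal right adjoint $n$, the inclusion is colimit-preserving between presentable $\infty$-categories and therefore admits a right adjoint by the adjoint functor theorem; the lax symmetric oidal structure on $n$ is then a formal consequence of the standard doctrinal adjunction for strict symmetric oidal left adjoints, applied to $\bE_\infty^{\text{nu}}$ in place of $\bE_\infty$ (compare \cite[Section 5.4.4]{ha}). The one genuine obstacle I anticipate, and the reason the statement is phrased in the non-unital world from the outset, is that the unit of $\cC_i$ need not lie in any bounded filtration stage $\cG_i\Cell_d(\cC_i^\omega)$: so a priori there is no unit object in the protoproduct, and the question of when a compatible unit can be produced is deferred to the subsequent parts of \Cref{ssec:four} under an additional uniformity hypothesis on how the units of the $\cC_i$ are built from their Pic-generators.
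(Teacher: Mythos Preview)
Your proposal is correct and follows the same strategy as the paper: the decisive point is that the $\cG_i$-cell filtration is symmetric oidal, which the paper simply asserts in the sentence preceding the lemma while you supply the double-induction sketch. The paper's execution is marginally more streamlined---rather than assembling the oidal structure on the protoproduct bottom-up from the filtration stages via colimits and $\Ind$, it observes that $\Prod{\cF}^{\omega}\cC_i$ is already symmetric oidal, that the protoproduct sits inside as a full subcategory by \cite[Lemma~3.37]{ultra1}, and then just checks that the tensor product restricts; for the lax right adjoint the precise reference is \cite[Corollary~7.3.2.7]{ha} rather than Section~5.4.4. One small slip in your write-up: the individual finite stages $\Prod{\cF}\cG_i\Cell_d(\cC_i^\omega)$ are not themselves symmetric oidal (they are not closed under $\otimes$); only the colimit over $d$ is.
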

\begin{proof}
Lemma 3.37 in \cite{ultra1} gives fully faithfullness. Note that $\Prod{\cF}^{\omega} \cC_i$ is symmetric oidal. Also, $\Prod{\cF}^{\Pic}(\cC_i,\cG_i)$ is a full subcategory of $\Prod{\cF}^{\omega} \cC_i$. Thus, to check that $\Prod{\cF}^{\Pic}(\cC_i,\cG_i)$ is symmetric oidal, we just need to check the tensor product on $\Prod{\cF}^{\omega} \cC_i$ restricts to $\Prod{\cF}^{\Pic}(\cC_i,\cG_i)$. But this follows from the fact that the filtration is symmetric oidal. The right adjoint is naturally promoted to a lax symmetric oidal functor by \cite[Corollary 7.3.2.7]{ha}.
\end{proof}

When $(\cC_i)_{i \in I}$ and $(\cG_i)_{i \in I}$ satisfy the conditions of the lemma, we will write $\Cayley_{\cF}$ for the Cayley functor associated to the symmetric oidal category $\Prod{\cF}^{\Pic}(\cC_i,\cG_i)$. 

\begin{cor} \label{cor:symmonfun}
Let $(\cG_i)_{i \in I}$ and $(\cH_i)_{i \in I}$ be collections of sets of Pic-generators for $(\cC_i)_{i \in I}$ and $(\cD_i)_{i \in I}$, respectively, let $(f_i)_{i \in I}$ be symmetric monoidal functors (necessarily preserving compact objects) such that $f_i$ takes $\cG_i$ to $\cH_i$. The protoproduct
\[
\xymatrix{\Prod{\cF}^{\Pic}(\cC_i,\cG_i) \ar[r]^{\Prod{\cF}^{\Pic}f_i} & \Prod{\cF}^{\Pic}(\cD_i,\cH_i)}
\]
is a symmetric oidal functor with lax symmetric oidal right adjoint $g$.
\end{cor}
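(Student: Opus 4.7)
The plan is to obtain $\Prod{\cF}^{\Pic} f_i$ by restricting a symmetric monoidal functor on the ambient compactly generated protoproducts and then transporting the symmetric oidal structure via \Cref{lem:nsymmmon}. Since each $f_i$ is a morphism in $\CAlg(\Catomega)$, the collection $(f_i)_{i \in I}$ assembles, via the compactly generated protoproduct construction of \cite[Section 3.5]{ultra1}, into a symmetric monoidal functor
\[
\Prod{\cF}^{\omega} f_i \colon \Prod{\cF}^{\omega} \cC_i \longrightarrow \Prod{\cF}^{\omega} \cD_i.
\]

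Next, I would verify that this functor restricts to the Pic-generated protoproducts. The hypothesis $f_i(\cG_i) \subseteq \cH_i$, combined with the exactness of $f_i$ and its preservation of compact objects, implies by induction on cell depth that $f_i(\cG_i\Cell_d(\cC_i^{\omega})) \subseteq \cH_i\Cell_d(\cD_i^{\omega})$ for every $d \ge 0$. Ultraproducts commute with these finite operations, so the levelwise maps assemble into compatible functors $\Prod{\cF} \cG_i\Cell_d(\cC_i^{\omega}) \to \Prod{\cF} \cH_i\Cell_d(\cD_i^{\omega})$; taking the filtered colimit in $d$ and Ind-extending then yields the desired compact-object-preserving functor $\Prod{\cF}^{\Pic} f_i$.

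Because the inclusions of the Pic-generated protoproducts into the compactly generated protoproducts are full symmetric oidal by \Cref{lem:nsymmmon}, the restriction of the symmetric monoidal functor $\Prod{\cF}^{\omega} f_i$ to these full subcategories is automatically symmetric oidal. As a colimit-preserving functor between presentable $\infty$-categories, $\Prod{\cF}^{\Pic} f_i$ admits a right adjoint $g$ by the adjoint functor theorem, and this right adjoint acquires the structure of a lax symmetric oidal functor via \cite[Corollary 7.3.2.7]{ha}, exactly as in the proof of \Cref{lem:nsymmmon}. The main delicate point I anticipate is the verification that $\cG$-cell filtrations are preserved under $f_i$: while the pullback definition of $\cG\Cell_*(\cC^{\omega})$ makes this essentially formal, one must keep track of how $f_i$ interacts with both the cell filtration on $\Thick(\cG_i)$ and the constant filtration on compact objects, which ultimately reduces to the exactness and compact-object-preservation of $f_i$ already in hand.
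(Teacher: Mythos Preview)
Your proposal is correct and follows the same route the paper intends: the corollary is stated without proof immediately after \Cref{lem:nsymmmon}, and your argument is exactly the natural unpacking of that implication---embed into $\Prod{\cF}^{\omega}$, check that the $f_i$ respect the $\cG$-cell filtrations so the restriction lands in the Pic-generated protoproducts, and invoke \cite[Corollary 7.3.2.7]{ha} for the lax right adjoint just as in the proof of \Cref{lem:nsymmmon}.
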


Our next goal is to establish conditions on a collection $(\cG_i)_{i \in I}$ of sets of Pic-generators that guarantee the existence of a unit in $\Prod{\cF}^{\Pic}(\cC_i,\cG_i)$. We begin with the definition of a quasi-unit.

\begin{defn}
An object $u$ in a symmetric oidal $\infty$-category $\cC$ is called a quasi-unit if there exists a natural equivalence
\[
\Cayley(u) \simeq u \otimes - \implies \Id_{\cC}
\]
of endofunctors on $\cC$. 
\end{defn}

In light of the following result due to Lurie~\cite[Corollary 5.4.4.7]{ha}, the problem reduces to the construction of a quasi-unit in $\Prod{\cF}^{\Pic}(\cC_i,\cG_i)$.

\begin{lem}\cite[Corollary 5.4.4.7]{ha}\label{lem:luriequasiunit}
A symmetric oidal $\infty$-category $\cC$ can be promoted to a symmetric monoidal $\infty$-category in a unique way if it contains a quasi-unit. A symmetric oidal functor $\cC \to \cD$ between symmetric monoidal $\infty$-categories can be promoted to a symmetric monoidal functor in a unique way if it send a quasi-unit to a quasi-unit. 
\end{lem}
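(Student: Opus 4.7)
The plan is to derive both assertions from Lurie's general result \cite[Corollary 5.4.4.7]{ha}: for any symmetric monoidal $\infty$-category $\cM^{\otimes}$, the forgetful functor $\theta \colon \CAlg(\cM) \to \Alg_{\bE_{\infty}^{\text{nu}}}(\cM)$ is fully faithful, with essential image spanned by those non-unital commutative algebras in $\cM$ that admit a quasi-unit. The key setup step is to apply this to $\cM^{\otimes} = \Catomega$, equipped with its natural Lurie tensor product of compactly generated $\infty$-categories. Under the conventions laid out earlier in the paper, a symmetric oidal compactly generated $\infty$-category is by definition an object of $\Alg_{\bE_{\infty}^{\text{nu}}}(\Catomega)$, and a compactly generated symmetric monoidal $\infty$-category is an object of $\CAlg(\Catomega)$; moreover the notion of quasi-unit introduced in the excerpt via the Cayley functor matches Lurie's. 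Hence both halves of the lemma should fall out as direct instances of Lurie's corollary.

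For the first assertion, I would invoke essential surjectivity of $\theta$ onto the full subcategory of non-unital algebras admitting a quasi-unit to produce the promotion of $\cC$ to an object of $\CAlg(\Catomega)$; full faithfulness of $\theta$ then forces the space of such promotions to be either empty or contractible, which gives the uniqueness clause. For the functorial assertion, full faithfulness supplies for any pair $\cC,\cD \in \CAlg(\Catomega)$ an equivalence of mapping spaces
\[
\Map_{\CAlg(\Catomega)}(\cC,\cD) \simeq \Map_{\Alg_{\bE_{\infty}^{\text{nu}}}(\Catomega)}(\cC,\cD)
\]
cut out by those non-unital functors whose underlying map respects units in the appropriate sense. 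A symmetric monoidal lift of a given symmetric oidal functor must send the unit of $\cC$ to the unit of $\cD$, and hence a quasi-unit to a quasi-unit; conversely, the hypothesis that a symmetric oidal functor carries a quasi-unit to a quasi-unit is precisely the compatibility needed to place it in the essential image of $\theta$ on morphism spaces, after which uniqueness of the lift is again immediate from full faithfulness.

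The main obstacle I expect, as opposed to the essentially formal logical deduction above, is simply to verify that $\Catomega$ fits into Lurie's framework as a symmetric monoidal $\infty$-category to which his corollary applies, and that the definition of quasi-unit phrased here as a natural equivalence $\Cayley(u) \simeq u \otimes - \implies \Id_{\cC}$ agrees with Lurie's operadic definition. Both are bookkeeping matters: the symmetric monoidal structure on $\Catomega$ is inherited from the Lurie tensor product on $\Pres^L$ via the identification of compactly generated $\infty$-categories with compact-object-preserving left adjoints, and Lurie's quasi-units in $\Alg_{\bE_{\infty}^{\text{nu}}}(\cM)$ are parametrized exactly by such natural transformations to the identity under the Cayley functor. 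Once these identifications are pinned down, the lemma is an immediate consequence of \cite[Corollary 5.4.4.7]{ha}.
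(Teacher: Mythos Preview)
Your proposal is correct and matches the paper's treatment exactly: the lemma is stated in the paper with the citation \cite[Corollary 5.4.4.7]{ha} in its header and is given no further proof, since it is simply a specialization of Lurie's result to $\cM=\Catomega$. Your elaboration of how the specialization works is accurate, but the paper itself treats the statement as a direct citation.
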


Let $J$ be a filtered diagram and let $\cC$ be a small $\infty$-category. Consider a functor $f \colon J^{\rhd} \to \cC$. We will call $f$ a formal colimit cone if the composite
\[
\xymatrix{J^{\rhd} \ar[r]^-{f} & \cC \ar[r] & \Ind \cC}
\] 
is a colimit cone. Note that the notion of a formal colimit is stronger than the notion of a colimit cone; in particular all formal colimit cones are colimit cones. If $\cC$ is pointed and $f \colon J^{\rhd} \to \cC$ is a functor sending the cone point to the $0$-object in $\cC$, then $f$ is a formal colimit cone if and only if for each $n \in J$ there exists an $m \in J$ and a map $l \colon n \to m$ in $J$ such that $f(l)$ is null. 

For the remainder of this subsection, let $I$ be a set and let $\cF$ be an ultrafilter on $I$.

Recall that the join $\star$ of simplicial sets is built using finite products and coproducts and thus commutes with the ultraproduct. Therefore, there is an isomorphism of simplicial sets $\Prod{\cF} (A_i \star B_i) \cong (\Prod{\cF}A_i) \star (\Prod{\cF} B_i)$, which specializes to an isomorphism of diagrams
\begin{equation}\label{eq:ultracone}
\Prod{\cF} (J_{i}^{\rhd}) \cong (\Prod{\cF} J_i)^{\rhd}.
\end{equation}

\begin{lem} \label{hey}
Assume that $G \colon \cC \to \cD$ is a fully faithful finite colimit preserving functor between $\infty$-categories with finite colimits and assume that $J$ is a filtered diagram then $f \colon J^{\rhd} \to \cC$ is a formal colimit cone if and only if $G \circ f$ is a formal colimit cone. 
\end{lem}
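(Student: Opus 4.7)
The plan is to promote everything to ind-completions and use the properties of $\Ind(G)$. Since $G$ is fully faithful and preserves finite colimits, it extends canonically to a functor $\Ind(G) \colon \Ind\cC \to \Ind\cD$ that is again fully faithful and preserves all small colimits; this is a standard property of ind-completion (the fully faithfulness comes from computing mapping spaces in $\Ind\cC$ and $\Ind\cD$ as filtered colimits of mapping spaces in $\cC$ and $\cD$, and cocontinuity follows because $\Ind(G)$ is a left adjoint). Under the Yoneda embeddings $\iota_\cC \colon \cC \hookrightarrow \Ind\cC$ and $\iota_\cD \colon \cD \hookrightarrow \Ind\cD$, there is a canonical equivalence $\Ind(G) \circ \iota_\cC \simeq \iota_\cD \circ G$.

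By definition, $f$ is a formal colimit cone if and only if $\iota_\cC \circ f$ is a colimit cone in $\Ind\cC$, and $G \circ f$ is a formal colimit cone if and only if $\iota_\cD \circ G \circ f \simeq \Ind(G) \circ \iota_\cC \circ f$ is a colimit cone in $\Ind\cD$. The lemma therefore reduces to showing that $\iota_\cC \circ f$ is a colimit cone in $\Ind\cC$ if and only if its image under $\Ind(G)$ is a colimit cone in $\Ind\cD$. The forward direction is immediate, since $\Ind(G)$ preserves colimits.

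For the reverse direction, I would argue as follows. Because $\Ind\cC$ is cocomplete, the restriction $\iota_\cC \circ f|_J$ admits an actual colimit cone $\tilde c \colon J^\rhd \to \Ind\cC$ that agrees with $\iota_\cC \circ f$ on $J$. Applying $\Ind(G)$ produces two cones in $\Ind\cD$ extending the same underlying diagram: the cone $\Ind(G) \circ \tilde c$, which is a colimit cone by cocontinuity of $\Ind(G)$, and the cone $\Ind(G) \circ \iota_\cC \circ f$, which is a colimit cone by hypothesis. The canonical comparison map between their apices in $\Ind\cD$ is thus an equivalence; by full faithfulness of $\Ind(G)$, the corresponding comparison map between their apices in $\Ind\cC$ is also an equivalence, so $\iota_\cC \circ f$ is a colimit cone. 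The only nontrivial input is the standard assertion that $\Ind(G)$ inherits both full faithfulness and cocontinuity from $G$; once this is in hand, the rest of the argument is formal manipulation of (co)limit cones.
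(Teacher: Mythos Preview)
Your proof is correct and follows exactly the approach the paper takes: the paper's one-line proof simply states that ``this follows from the fact that $\Ind(G)$ is a fully faithful colimit preserving functor,'' and your argument is a careful unpacking of precisely that implication. The only difference is level of detail; the underlying idea is identical.
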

\begin{proof}
This follows from the fact that $\Ind(G)$ is a fully faithful colimit preserving functor.
\end{proof}

\begin{lem} \label{formalco}
Let $(J_i)_{i \in I}$ be a collection of filtered diagram categories and let $(\cC_i)_{i \in I}$ be a collection of small stable $\infty$-categories. Assume that, for each $i \in I$, we have a formal colimit cone $f_i \colon J_{i}^{\rhd} \to \cC_i$. Then
\[
\xymatrix{\Prod{\cF} J_{i}^{\rhd} \ar[r] &  \Prod{\cF} \cC_i}
\]
is a formal colimit cone.
\end{lem}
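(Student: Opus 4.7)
The plan is to use the isomorphism~\eqref{eq:ultracone} to view $\Prod{\cF} f_i$ as a cone $(\Prod{\cF} J_i)^{\rhd} \to \Prod{\cF} \cC_i$, and then to reduce the question to the null criterion for pointed formal colimit cones stated just before the lemma. First I would check that the indexing $\Prod{\cF} J_i$ is filtered: filteredness is an existential finitary property preserved by ultraproducts of $\infty$-categories, so the notion of a formal colimit cone is actually defined for the map in question. Next, I would reduce to the pointed case by replacing each $f_i$ with $\tilde f_i = \fib(f_i \to \const_{f_i(\infty)})$ in $\Fun(J_i^{\rhd}, \cC_i)$; this is a cone with $\tilde f_i(\infty) \simeq 0$. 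Because $\cC_i$ is stable and $\cC_i \hookrightarrow \Ind \cC_i$ preserves finite limits, a short diagram chase shows $f_i$ is a formal colimit cone if and only if $\tilde f_i$ is. Since ultraproducts of stable $\infty$-categories preserve finite limits and the zero object, one has $\widetilde{\Prod{\cF} f_i} \simeq \Prod{\cF} \tilde f_i$, so it suffices to prove that $\Prod{\cF} \tilde f_i$ is a formal colimit cone.

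In the pointed setting I would now invoke the null criterion: for each $n_i \in J_i$, there exist $l_i \colon n_i \to m_i$ in $J_i$ and a nullhomotopy $h_i$ of $\tilde f_i(l_i)$. Given $[n] = [(n_i)] \in \Prod{\cF} J_i$, choose such data $(m_i, l_i, h_i)$ in $\cF$-almost every coordinate and take ultraproducts to obtain $[m] \in \Prod{\cF} J_i$, a morphism $[l] \colon [n] \to [m]$, and a nullhomotopy $[h]$ of $(\Prod{\cF} \tilde f_i)([l])$ in $\Prod{\cF} \cC_i$. Applying the null criterion in $\Prod{\cF} \cC_i$ then certifies that $\Prod{\cF} \tilde f_i$ is a formal colimit cone, and undoing the fiber reduction gives the desired conclusion for $\Prod{\cF} f_i$.

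The main points where care is required are the two compatibilities with ultraproducts of $\infty$-categories: preservation of finite limits (which is what yields both the zero object and the identification $\widetilde{\Prod{\cF} f_i} \simeq \Prod{\cF} \tilde f_i$), and preservation of nullhomotopies of morphisms. Neither should pose a real obstacle: ultraproducts in $\Cat$ are realized as filtered colimits of finite products, hence are exact, and a nullhomotopy of a morphism is encoded by a $2$-simplex of prescribed shape that transports through the ultraproduct coordinate-wise.
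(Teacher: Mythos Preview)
Your proof is correct and follows essentially the same approach as the paper: reduce via stability to the case where the cone point is zero, then verify the null criterion by choosing, coordinate-wise, maps $l_i \colon n_i \to m_i$ with $\tilde f_i(l_i)$ null and passing to the ultraproduct. The paper is simply terser about the reduction step (it says ``by stability, we can reduce to the case that each $f_i$ sends the cone point to the zero object'') and does not spell out the compatibilities with ultraproducts that you flag; your explicit treatment via $\tilde f_i = \fib(f_i \to \const_{f_i(\infty)})$ and the check that $\widetilde{\Prod{\cF} f_i} \simeq \Prod{\cF} \tilde f_i$ is a welcome elaboration of the same idea.
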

\begin{proof}
We must show that the induced map 
\[
\xymatrix{\Prod{\cF} f_i \colon (\Prod{\cF} J_i)^{\rhd} \ar[r] & \Prod{\cF}\cC_i}
\]
is a formal colimit. By stability, we can reduce to the case that each functor $f_i \colon J_{i}^{\rhd} \to \cC_i$ sends the cone point to the zero object. Thus the cone point in $(\Prod{\cF} J_i)^{\rhd}$ is sent to the zero object in $\Prod{\cF}\cC_i$ by $\Prod{\cF} f_i$. It now suffices to check that for each $[n_i] \in \Prod{\cF} J_i$ there exists an object $[m_i] \in \Prod{\cF} J_i$ and a map $l \colon [n_i] \to [m_i]$ in $\Prod{\cF} J_i$ such that $(\Prod{\cF}f_i)(l)$ is null. We may choose $l$ to be $[l_i]$, where $l_i$ has the corresponding property for $J_i$.
\end{proof}

Let $(\cC, F_*)$ and $(\cD, G_*)$ be compactly generated $\infty$-categories with filtrations. Let $\beta \colon \N \to \N$ be a non-decreasing function. We define $\Fun^{\beta}( (\cC, F_*), (\cD, G_*))$ to be the full subcategory of $\Catomega(\cC, \cD)$ on the functors $f \colon \cC \to \cD$ such that
\[
\xymatrix{f^{\omega} \colon \cC^{\omega} \ar[r] & \cD^{\omega}}
\]
has the property that if $c \in F_{k}\cC$ then $f^{\omega}(c) \in G_{\beta(k)}\cD$. 

\begin{lem}\label{lem:triangle}
Let $(\cC_i, F_{i,*})_{i \in I}$ be a symmetric oidal collection of compactly generated symmetric oidal $\infty$-categories and let $k$ be a fixed natural number. There exists a non-decreasing function $\beta_k \colon \N \to \N$ such that the restriction of the Cayley map $\Cayley_{\cF}$ to $\Prod{\cF}F_{i,k}\cC_i$ factors as indicated in the following commutative diagram:
\[
\xymatrix{\Prod{\cF}F_{i,k}\cC_i \ar[r] \ar[d] & \Prod{\cF}^{\flat}(\cC_i,F_{i,*}) \ar[d]^{\Cayley_{\cF}}  \\ 
\Prod{\cF} \Fun^{\beta_k}((\cC_i, F_{i,*}), (\cC_i, F_{i,*})) \ar@{-->}[r] & \Fun^L(\Prod{\cF}^{\flat}(\cC_i,F_{i,*}), \Prod{\cF}^{\flat}(\cC_i,F_{i,*})).}
\]
\end{lem}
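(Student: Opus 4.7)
The plan is to extract the function $\beta_k$ from the symmetric oidal structure, construct the dashed functor by ultraproducing levelwise multiplication functors, and then verify the triangle commutes by comparison on a set of generators.

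First, since the collection $(\cC_i, F_{i,*})_{i \in I}$ is symmetric oidal, the filtrations are symmetric oidal uniformly in $i$: there is a non-decreasing function $\beta_k \colon \N \to \N$ such that $F_{i,k}\cC_i \otimes F_{i,j}\cC_i \subseteq F_{i,\beta_k(j)}\cC_i$ for every $i \in I$ and $j \in \N$. Consequently, for each $x \in F_{i,k}\cC_i$ the colimit-preserving endofunctor $x \otimes (-)\colon \cC_i \to \cC_i$ restricts at each filtration degree to $F_{i,j}\cC_i \to F_{i,\beta_k(j)}\cC_i$ and so defines an object of $\Fun^{\beta_k}((\cC_i, F_{i,*}), (\cC_i, F_{i,*}))$. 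This assignment is natural in $x$, yielding a functor $F_{i,k}\cC_i \to \Fun^{\beta_k}((\cC_i, F_{i,*}), (\cC_i, F_{i,*}))$ for each $i$, and taking ultraproducts over $\cF$ supplies the left vertical arrow of the triangle.

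Next, I construct the dashed arrow. Unwinding definitions, an object $[f_i]$ of $\Prod{\cF}\Fun^{\beta_k}((\cC_i, F_{i,*}), (\cC_i, F_{i,*}))$ consists, modulo $\cF$, of colimit-preserving functors $f_i\colon \cC_i \to \cC_i$ that send $F_{i,d}\cC_i$ into $F_{i,\beta_k(d)}\cC_i$ for every $d \in \N$. Restricting to filtration level $d$ and applying $\Prod{\cF}$ produces compatible functors $\Prod{\cF} F_{i,d}\cC_i \to \Prod{\cF} F_{i,\beta_k(d)}\cC_i$; passing to the filtered colimit over $d$ gives an endofunctor of $\Colim{d}\Prod{\cF} F_{i,d}\cC_i$, and Ind-extending produces a colimit-preserving endofunctor of $\Prod{\cF}^{\flat}(\cC_i, F_{i,*}) = \Ind\Colim{d}\Prod{\cF} F_{i,d}\cC_i$. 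This is functorial in $[f_i]$ and defines the required dashed functor with target $\Fun^L(\Prod{\cF}^{\flat}(\cC_i, F_{i,*}), \Prod{\cF}^{\flat}(\cC_i, F_{i,*}))$.

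Finally, I check commutativity. Tracing through the constructions, both composites around the triangle send an object $[x_i] \in \Prod{\cF} F_{i,k}\cC_i$ to the colimit-preserving endofunctor of $\Prod{\cF}^{\flat}(\cC_i, F_{i,*})$ determined on the compact generators $\Prod{\cF} F_{i,d}\cC_i$ by $[y_i] \mapsto [x_i \otimes y_i]$; since both functors preserve all colimits and agree on this generating collection, they are canonically equivalent. The main obstacle is to make the dashed map rigorous, namely to verify that the ultraproducts of colimit-preserving functors really assemble to a colimit-preserving endofunctor on the cell protoproduct; this requires combining the universal property of $\Ind$ with the compatibility of ultraproducts and filtered colimits of small stable $\infty$-categories established in \cite{ultra1}, and ensuring functoriality in $[f_i]$ at the level of $\infty$-categories rather than just objects.
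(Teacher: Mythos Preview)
Your approach is essentially the same as the paper's: extract $\beta_k$ from the uniform symmetric oidal filtration, factor each Cayley map through $\Fun^{\beta_k}$, take ultraproducts, and build the dashed arrow from the levelwise action on filtration pieces. The paper resolves the $\infty$-categorical functoriality issue you flag at the end not by describing the dashed arrow objectwise on $[f_i]$, but by starting from the coordinate-wise \emph{evaluation} functors
\[
\Prod{i \in V} F_{i, s} \cC_i \times \Prod{i \in U}\Fun^{\beta_k}((\cC_i, F_{i,*}), (\cC_i, F_{i,*})) \longrightarrow \Prod{i \in V} F_{i, \beta_k(s)} \cC_i
\]
for $V \subseteq U$ in $\cF$, passing to the colimit over $V$ and $s$, and then using adjunction to curry over to a functor out of $\Prod{\cF}\Fun^{\beta_k}$; this packages the functoriality in $[f_i]$ automatically at the $\infty$-categorical level and simultaneously makes the commutativity of the square transparent, since the same evaluation description computes $\Cayley_{\cF}$ on generators.
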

\begin{proof}

Fix a non-decreasing function $\beta \colon \N \to \N$, then for any two subsets $V \subseteq U$ in $\cF$ and $s \ge 0$, there are natural coordinate-wise evaluation functors 
\[
\xymatrix{\Prod{i \in V} F_{i, s} \cC_i \times \Prod{i \in U}\Fun^{\beta}((\cC_i, F_{i,*}), (\cD_i, G_{i,*})) \ar[r] & \Prod{i \in V} G_{i, \beta(s)} \cD_i.}
\]
Note that the functors in $\Fun^{\beta}((\cC_i, F_{i,*}), (\cD_i, G_{i,*}))$ preserve all colimits. Passing to the colimit first over all $V \in \cF$ contained in $U$ and then over $s\ge 0$ thus induces a functor
\[
\xymatrix{\Ind\colim_{s} \Prod{\cF} F_{i, s} \cC_i \times \Prod{i \in U}\Fun^{\beta}((\cC_i, F_{i,*}), (\cD_i, G_{i,*})) \ar[r] & \Ind \colim_{s} \Prod{\cF} G_{i, \beta(s)} \cD_i.}
\]
By naturality in $U$ and the definition of the protoproduct, varying $U$, and using adjunction then yields a functor
\begin{equation}\label{eq:betafun}
\xymatrix{\Prod{\cF}\Fun^{\beta}((\cC_i, F_{i,*}), (\cD_i, G_{i,*})) \ar[r] & \Fun^L(\Prod{\cF}^{\flat}(\cC_i,F_{i,*}), \Prod{\cF}^{\flat}(\cD_i,G_{i,*})).}
\end{equation}

Returning to the specific situation of the lemma, we first observe that for any $i \in I$ and fixed $k$, the Cayley map $\Cayley_i$ for $\cC_i$ factors as follows
\[
\xymatrix{F_{i,k}\cC_i \ar[r] \ar@{-->}[d]_{\Cayley_{i,k}} & \cC_i \ar[d]^{\Cayley_i} \\
\Fun^{\beta_k}((\cC_i,F_{i,*}),(\cC_i,F_{i,*})) \ar[r] & \Fun^L(\cC_i,\cC_i),}
\]
with $\beta_k$ independent of $i$ by assumption. Let $V \subseteq U$ be subsets in $\cF$. Composing the functor $\prod_{i \in V}\mathrm{id}_{F_{i, s}\cC_i} \times \prod_{i \in U} \Cayley_{i,k}$ with the evaluations, we thus obtain natural functors 
\[
\xymatrix{\prod_{i \in V} F_{i, s} \cC_i \times \prod_{i \in U}F_{i,k}\cC_i \ar[r] & \prod_{i \in V} F_{i, s} \cC_i \times \prod_{i \in U}\Fun^{\beta_k}((\cC_i,F_{i,*}),(\cC_i,F_{i,*})) \ar[d] \\ 
& \prod_{i \in V}F_{i, \beta_k(s)}\cC_i.}
\]
Informally speaking, this composite is given in each coordinate $i \in V$ by the monoidal product in $\cC_i$ restricted to the corresponding filtration step, i.e., the functor $\otimes \colon F_{i, s}\cC_i \times F_{i,k}\cC_i \to F_{i, \beta_k(s)}\cC_i$. Passing to colimits and unwinding the adjunction as in the construction of the functor in \eqref{eq:betafun} gives the desired factorization. 
\end{proof}

\begin{defn}\label{def:uniformunital}
A collection $(\cG_i)_{i \in I}$ of sets of Pic-generators for $(\cC_i)_{i \in I}$ is called unital if there is a natural number $d$ such that for all $i \in I$ the $\infty$-category $\Ind(\cG_i\Cell_d(\cC_i^{\omega})) \subset \cC_i$ contains the unit of $\cC_i$. In the special case when $(\cG_i)_{i \in I} = (\Pic(\cC_i))_{i\in I}$, we will refer to $(\cC_i)_{i \in I}$ as a unital collection of Pic-generated $\infty$-categories.

A collection of functors $(f_i)_{i \in I}$ between two such collections $(\cC_i,\cG_i)_{i \in I}$ and $(\cD_i,\cH_i)_{i \in I}$ is by definition a set of symmetric monoidal functors $f_i\colon \cC_i \to \cD_i$ that preserve colimits and compact objects.
\end{defn}

\begin{prop}\label{prop:picprotomonoidal}
For any unital collection $(\cG_i)_{i\in I}$ of sets of Pic-generators for $(\cC_i)_{i \in I}$ and any ultrafilter $\cF$ on $I$, the category $\Prod{\cF}^{\Pic}(\cC_i,\cG_i)$ contains a quasi-unit.
\end{prop}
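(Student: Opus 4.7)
The plan is to exhibit a quasi-unit in $\Prod{\cF}^{\Pic}(\cC_i,\cG_i)$ by assembling the coordinate-wise unit witnesses into a formal colimit cone, taking its ultraproduct via \Cref{formalco}, and verifying the quasi-unit identity on compact generators using \Cref{lem:triangle}.

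First, I fix $d$ as in the unital hypothesis and for each $i$ choose a filtered diagram $p_i \colon J_i \to \cG_i\Cell_d(\cC_i^{\omega})$ with colimit $\mathbf{1}_i$ in $\cC_i$. The extension $p_i^{\rhd} \colon J_i^{\rhd} \to \cC_i^{\omega}$ sending the cone point to $\mathbf{1}_i$ is then a formal colimit cone. Applying \Cref{formalco} to the small stable $\infty$-categories $\cC_i^{\omega}$ together with \eqref{eq:ultracone} shows that $\Prod{\cF}p_i^{\rhd}$ is a formal colimit cone in $\Prod{\cF}\cC_i^{\omega}$ with cone point $[\mathbf{1}_i]$. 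Since the body of this cone lands in $\Prod{\cF}\cG_i\Cell_d(\cC_i^{\omega})$, which sits inside the compact objects of $\Prod{\cF}^{\Pic}(\cC_i,\cG_i)$, and the canonical inclusion $\Prod{\cF}^{\Pic}(\cC_i,\cG_i) \hookrightarrow \Prod{\cF}^{\omega}\cC_i$ is fully faithful and colimit-preserving, the colimit $u := \colim_{\Prod{\cF}J_i} \Prod{\cF}p_i$ in $\Prod{\cF}^{\Pic}(\cC_i,\cG_i)$ exists and is identified with $[\mathbf{1}_i]$.

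Next, I construct a natural transformation $u \otimes - \to \Id_{\Prod{\cF}^{\Pic}}$. For each $j_i \in J_i$, the map $p_i(j_i) \to \mathbf{1}_i$ in $\cC_i$ induces a natural transformation $p_i(j_i) \otimes - \to \Id_{\cC_i}$ of colimit-preserving endofunctors of $\cC_i$, both of which lie in $\Fun^{\beta}((\cC_i,\cG_i\Cell_*),(\cC_i,\cG_i\Cell_*))$ for a single non-decreasing $\beta$ independent of $i$, by the symmetric oidality of the filtration and the uniform bound $d$. Taking ultraproducts and applying \eqref{eq:betafun} produces, for each $j \in \Prod{\cF}J_i$, a natural transformation $u_j \otimes - \to \Id_{\Prod{\cF}^{\Pic}}$, where $u_j := [p_i(j_i)]$ and where one checks directly that the image of $[\Id_{\cC_i}]$ under \eqref{eq:betafun} is $\Id_{\Prod{\cF}^{\Pic}}$. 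Since tensoring is a left adjoint, taking the colimit over the filtered $\Prod{\cF}J_i$ yields the desired natural transformation $u \otimes - \to \Id$.

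To verify this is an equivalence, it suffices by colimit preservation of both sides to evaluate on compact generators $x = [x_i] \in \Prod{\cF}\cG_i\Cell_k(\cC_i^{\omega})$. Tensoring the cone $p_i^{\rhd}$ with $x_i$ in each coordinate produces a cone in $\cC_i^{\omega}$ landing in $\cG_i\Cell_{\beta(k)}(\cC_i^{\omega})$ with cone point $\mathbf{1}_i \otimes x_i \simeq x_i$; this is a formal colimit cone in $\cC_i^{\omega}$. A further application of \Cref{formalco} combined with the fully faithful colimit-preserving inclusion $\Prod{\cF}^{\Pic}(\cC_i,\cG_i) \hookrightarrow \Prod{\cF}^{\omega}\cC_i$ identifies $u \otimes x$ with $x$. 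The hardest part will be showing that the natural transformation constructed via \eqref{eq:betafun} agrees with the canonical map exhibited by this formal-colimit identification; this amounts to unwinding \eqref{eq:betafun} on morphisms and verifying that it reproduces, coordinate-wise, the maps $p_i(j_i) \otimes x_i \to x_i$ induced by $p_i(j_i) \to \mathbf{1}_i$.
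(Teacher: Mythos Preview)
Your overall strategy matches the paper's: build $u$ as the colimit of $\Prod{\cF}p_i$, produce a cone of endofunctors landing in $\Fun^{\beta}$ whose cone point is the identity, push it through \eqref{eq:betafun}, and then verify the resulting transformation is an equivalence by evaluating on generators via \Cref{formalco}. Your third paragraph, where you tensor $p_i^{\rhd}$ with a compact $x_i$ so that the cone point becomes $x_i \in \cC_i^{\omega}$ and \Cref{formalco} legitimately applies, is exactly the paper's endgame.

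There is, however, a genuine slip in your first paragraph. You write $p_i^{\rhd}\colon J_i^{\rhd} \to \cC_i^{\omega}$ and invoke \Cref{formalco} on $\cC_i^{\omega}$, but the unit $\mathbf{1}_i$ is in general \emph{not} compact; that non-compactness is the entire reason the paper exists. So $p_i^{\rhd}$ does not land in $\cC_i^{\omega}$, \Cref{formalco} does not apply there, and the identification of $u$ with $[\mathbf{1}_i]$ in $\Prod{\cF}\cC_i^{\omega}$ fails. Fortunately you never actually use that identification downstream, so this is a removable error rather than a fatal one. The paper sidesteps the issue by never trying to place $\mathbf{1}_i$ among the compacts: instead it observes that although $\mathbf{1}_i \notin \cC_i^{\omega}$, its image under the Cayley functor is $\Id_{\cC_i}$, which \emph{does} lie in $\Fun^{\beta_d}$. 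The whole cone $J_i^{\rhd}$ is then lifted at once to $\Fun^{\beta_d}$ (using that the inclusion into $\Fun(\cC_i,\cC_i)$ is fully faithful), and its ultraproduct gives the coherent cone $h\colon (\Prod{\cF}J_i)^{\rhd} \to \Fun^L(\Prod{\cF}^{\Pic},\Prod{\cF}^{\Pic})$ directly. This also handles the coherence issue you allude to at the end: rather than assembling the transformation ``for each $j$'' and then checking compatibility with the formal-colimit identification, the paper produces the transformation \emph{as} the cone map $h$, so no separate comparison is needed.
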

\begin{proof}
The definition of a unital collection of Pic-generated $\infty$-categories implies that there exists a $d \in \N$ and functors $f_i \colon J_i \to \cG_i\Cell_d(\cC_i^{\omega})$ for each $i \in I$ such that $J_i$ is filtered and $f_i$ picks out the unit in $\cC_{i}$. In \cite[Proposition 3.19]{ultra1}, we show that $\Prod{\cF}J_i$ is filtered. Let 
\[
u = \colim(\Prod{\cF}J_i \lra{\Prod{\cF}f_i} \Prod{\cF}\cG_i\Cell_d(\cC_i^{\omega}) \to \Prod{\cF}^{\Pic}(\cC_i,\cG_i)).
\] 
We will show that $u$ is a quasi-unit for $\Prod{\cF}^{\Pic}(\cC_i,\cG_i)$. First we will construct a natural transformation
\[
u \otimes - \implies \Id.
\]
For each $\cC_i$, the functor $f_i \colon J_i \to \cG_i\Cell_d(\cC_i^{\omega}) \subset \cC_i$ has a colimit, giving us functors $f_{i}^{\rhd} \colon J_{i}^{\rhd} \to \cC_i$. Now consider the composite
\[
\xymatrix{g_i \colon J_{i}^{\rhd} \ar[r]^-{{f_{i}^{\rhd}}} & \cC_i \ar[r]^-{\Cayley_i} & \Fun(\cC_i, \cC_i).}
\]
By the oidality assumption on the filtration, there exists a function $\beta_d\colon \N \to \N$ independent of $i$ and a diagram (not including the lifting)
\[
\xymatrix{J_i \ar[r]^-{f_i} \ar[d] & \cG_i\Cell_d(\cC_i^{\omega}) \ar[r] & \Fun^{\beta_d}((\cC_{i},\cG_i\Cell_*(\cC_i^{\omega})), (\cC_{i},\cG_i\Cell_*(\cC_i^{\omega}))) \ar[d] \\ J_{i}^{\rhd} \ar@{-->}[urr] \ar[rr]_-{g_i} &  & \Fun(\cC_i,\cC_i)}
\]
for each $i$. We will show that the lift exists. Since the right vertical arrow is fully faithful, it is enough to provide a lift on objects. For the objects in $J_i$, this is clear. Since the cone point goes to the identity functor in $\Fun(\cC_i,\cC_i)$, it lifts as well. Since the Cayley functor is a colimit preserving functor, the dashed arrow is a colimit diagram.

By applying the ultraproduct to the above diagrams and using \Cref{lem:triangle}, we get the following commutative diagram:
\[
\resizebox{\textwidth}{!}{
\xymatrix{\Prod{\cF} J_i \ar[r] \ar[d] & \Prod{\cF}\cG_i\Cell_d(\cC_i^{\omega}) \ar[r] \ar[d] & \Prod{\cF}^{\Pic} (\cC_i,\cG_i) \ar[d]  \\ \Prod{\cF} J_{i}^{\rhd} \ar[r] & \Prod{\cF} \Fun^{\beta_d}((\cC_{i},\cG_i\Cell_d(\cC_i^{\omega})), (\cC_{i},\cG_i\Cell_d(\cC_i^{\omega}))) \ar[r] & \Fun^L(\Prod{\cF}^{\Pic} (\cC_i,\cG_i), \Prod{\cF}^{\Pic} (\cC_i,\cG_i).}
}
\]
Recall that $\Prod{\cF} J_{i}^{\rhd} \cong (\Prod{\cF} J_{i})^{\rhd}$ by Equation \eqref{eq:ultracone}. Consider the composition $h$ of the horizontal bottom arrows which sends the cone point to the identity functor. Since the tensor product commutes with colimits in each variable, the colimit of $h$ restricted to $\Prod{\cF} J_i$ is equivalent to $u \otimes -$. Therefore, to show that $u$ is a quasi-unit, it is enough to show that $h$ is a colimit diagram.

To this end, let $X = [X_i] \in \Prod{\cF} \cG_i\Cell_k(\cC_i^{\omega})$, we can extend the diagram above to 
\[
\resizebox{\textwidth}{!}{
\xymatrix{\Prod{\cF} J_i \ar[r] \ar[d]  & \Prod{\cF}\cG_i\Cell_d(\cC_i^{\omega}) \ar[r] \ar[d] & \Prod{\cF}^{\Pic} (\cC_i,\cG_i) \ar[d]  \\ 
\Prod{\cF} J_{i}^{\rhd} \ar[rd]^{G_X} \ar[rdd]_{H_X} \ar[r] & \Prod{\cF} \Fun^{\beta_d}((\cC_{i},\cG_i\Cell_*(\cC_i^{\omega})), (\cC_{i},\cG_i\Cell_*(\cC_i^{\omega}))) \ar[r] \ar[d]_{\text{ev}_X} & \Fun^L(\Prod{\cF}^{\Pic} (\cC_i,\cG_i), \Prod{\cF}^{\Pic} (\cC_i,\cG_i)) \ar[dd]^{\text{ev}_{X}'} \\ 
& \Prod{\cF} \cG_i\Cell_{\beta_d(k)}(\cC_i^{\omega}) \ar[d] & \\
& (\Prod{\cF}^{\Pic} (\cC_i,\cG_i))^{\omega} \ar[r] & \Prod{\cF}^{\Pic}(\cC_i,\cG_i),}
}
\]
where $\text{ev}_X$ and $\text{ev}_{X}'$ are the evaluation maps at $X$. Since colimits in functor categories are computed pointwise and because the objects of $\prod_{\cF} \cG_i\Cell_k(\cC_i^{\omega})$ as $k$ varies form a set of generators for $\prod_{\cF}^{\Pic} (\cC_i,\cG_i)$, to check that $h$ is a colimit diagram, it suffices to check that $\text{ev}_{X}' \circ h$ is a colimit diagram for each such $X$. Therefore, we have reduced the claim to showing that $H_X$ is a formal colimit for all $X$. \Cref{hey} implies that the map 
\[
\xymatrix{\Prod{\cF}\cG_i\Cell_{\beta_d(k)}(\cC_i^{\omega}) \ar[r] & (\Prod{\cF}^{\Pic} (\cC_i,\cG_i))^{\omega}}
\]
preserves formal colimits. Therefore it is enough to show that $G_X$ is a formal colimit cone. The functor $f_{i}^{\rhd}$ defined above is a colimit diagram. Define $G_{X_i} = (- \otimes X_i) \circ f_{i}^{\rhd}$. The functor $G_{X_i}$ factors through $\cG_i\Cell_{\beta_d(k)}(\cC_i^{\omega})$, and we will take this to be the target of $G_{X_i}$. By \Cref{hey}, $G_{X_i}$ is a formal colimit cone and since $G_X = \Prod{\cF}G_{X_i}$, by  \Cref{formalco} we are done. 
\end{proof}

\begin{cor}\label{cor:picprotomonoidal}
For any unital collection $(\cG_i)_{i\in I}$ of sets of Pic-generators for $(\cC_i)_{i \in I}$ and any ultrafilter $\cF$ on $I$, the category $\Prod{\cF}^{\Pic}(\cC_i,\cG_i)$ is equipped with a canonical symmetric monoidal structure. Moreover, there is a canonical symmetric monoidal functor
\[
\xymatrix{\Prod{\cF}^{\Pic}(\cC_i,\cG_i) \ar[r] & \Prod{\cF}^{\Pic}(\cC_i,\Pic(\cC_i)) = \Prod{\cF}^{\Pic}\cC_i}
\]
with lax symmetric monoidal right adjoint.
\end{cor}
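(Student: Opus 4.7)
The corollary should follow by combining the three main technical tools developed earlier in the subsection: the symmetric oidal structure on Pic-generated protoproducts (Lemma \ref{lem:nsymmmon}), the existence of a quasi-unit (Proposition \ref{prop:picprotomonoidal}), and Lurie's quasi-unit recognition principle (Lemma \ref{lem:luriequasiunit}). The plan is to assemble these pieces twice, once for the underlying category and once for the comparison functor.

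First, the category. By Lemma \ref{lem:nsymmmon}, $\Prod{\cF}^{\Pic}(\cC_i,\cG_i)$ is a symmetric oidal $\infty$-category, and by Proposition \ref{prop:picprotomonoidal} it contains a quasi-unit $u$ constructed as a colimit over $\Prod{\cF} J_i$ of a diagram obtained by ultraproducing filtered diagrams for the units $\mathbf{1}_{\cC_i}$. Lemma \ref{lem:luriequasiunit} then promotes the symmetric oidal structure to a canonical symmetric monoidal structure with unit $u$.

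Second, the comparison functor. Since $\cG_i \subseteq \Pic(\cC_i)$ as subgroups of invertibles, the identity functor $\mathrm{id}_{\cC_i}$ is a symmetric monoidal functor preserving compact objects and sending $\cG_i$ into $\Pic(\cC_i)$. Consequently the collection $(\mathrm{id}_{\cC_i})$ fits the hypotheses of Corollary \ref{cor:symmonfun}, yielding a symmetric oidal functor
\[
\Phi\colon \Prod{\cF}^{\Pic}(\cC_i,\cG_i) \longrightarrow \Prod{\cF}^{\Pic}(\cC_i,\Pic(\cC_i)) = \Prod{\cF}^{\Pic}\cC_i
\]
with lax symmetric oidal right adjoint. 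The collection $(\cC_i,\Pic(\cC_i))$ is automatically unital since the unit $\mathbf{1}_{\cC_i} \in \Pic(\cC_i)$ already lies in the $0$-th filtration stage, so by the first part the target also has a canonical symmetric monoidal structure, with quasi-unit $u'$ built in exactly the same way from the diagrams picking out $\mathbf{1}_{\cC_i}$. To invoke the second clause of Lemma \ref{lem:luriequasiunit} and upgrade $\Phi$ to a symmetric monoidal functor, it suffices to verify that $\Phi(u) \simeq u'$. But $u$ is a colimit of a diagram landing in $\Prod{\cF}\cG_i\Cell_d(\cC_i^{\omega})$, and $\Phi$ is induced by the identities $\mathrm{id}_{\cC_i}$, so $\Phi$ carries this diagram to the analogous diagram defining $u'$; since the protoproduct and the colimit involved are preserved by $\Phi$ (which is colimit-preserving by construction), we conclude $\Phi(u)\simeq u'$.

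The main subtlety, and the only non-formal point, is precisely this compatibility of the quasi-units under $\Phi$: one must unwind the construction of Proposition \ref{prop:picprotomonoidal} far enough to see that the formal colimit cone presenting $u$ is sent by $\Phi$ to the analogous cone presenting $u'$, which reduces to coordinate-wise checking in each $\cC_i$ and is immediate once spelled out. Finally, the existence of a lax symmetric monoidal right adjoint is automatic: $\Phi$ is colimit-preserving between presentable $\infty$-categories, so it admits a right adjoint by the adjoint functor theorem, and any right adjoint to a symmetric monoidal functor acquires a canonical lax symmetric monoidal structure by \cite[Corollary 7.3.2.7]{ha}, just as in the proof of Lemma \ref{lem:nsymmmon}.
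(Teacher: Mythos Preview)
Your approach matches the paper's: combine \Cref{lem:nsymmmon}, \Cref{prop:picprotomonoidal}, and \Cref{lem:luriequasiunit} for the first claim, then use \Cref{cor:symmonfun} and preservation of the quasi-unit for the second. The paper's proof is terser but follows exactly this line.

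There is one genuine slip in your reasoning. You claim that $(\cC_i,\Pic(\cC_i))$ is ``automatically unital since the unit $\mathbf{1}_{\cC_i} \in \Pic(\cC_i)$ already lies in the $0$-th filtration stage.'' This is false: the filtration $\PicCell_*(\cC_i^{\omega})$ is a filtration on the \emph{compact} objects, and the whole point of this section is that $\mathbf{1}_{\cC_i}$ need not be compact. Membership in $\Pic(\cC_i)$ does not place the unit in any $\PicCell_k(\cC_i^{\omega})$. The correct argument is immediate, though: since $(\cG_i)$ is unital with some bound $d$, the unit lies in $\Ind(\cG_i\Cell_d(\cC_i^{\omega}))$, and because $\cG_i \subseteq \Pic(\cC_i)$ we have $\cG_i\Cell_d(\cC_i^{\omega}) \subseteq \PicCell_d(\cC_i^{\omega})$, so the unit also lies in $\Ind(\PicCell_d(\cC_i^{\omega}))$. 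Thus $(\Pic(\cC_i))$ inherits unitality from $(\cG_i)$ with the same bound. Once this is fixed, your verification that $\Phi(u) \simeq u'$ goes through, because you may use the \emph{same} diagrams $f_i\colon J_i \to \cG_i\Cell_d(\cC_i^{\omega}) \hookrightarrow \PicCell_d(\cC_i^{\omega})$ to build the quasi-unit on both sides, and $\Phi$ is induced by the identity functors.
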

\begin{proof}
By \Cref{lem:nsymmmon} and \Cref{prop:picprotomonoidal}, $\Prod{\cF}^{\Pic}(\cC_i,\cG_i)$ is a symmetric oidal $\infty$-category equipped with a quasi-unit, so the first claim is a consequence of \Cref{lem:luriequasiunit}, while the second follows from this and \Cref{cor:symmonfun}. 
\end{proof}

We record another corollary.

\begin{cor}\label{cor:picprotofun}
Let $(f_i\colon \cC_i \to \cD_i)_{i\in I}$ be a collection of functors between unital collections of Pic-generated $\infty$-categories (see \cref{def:uniformunital}). For any ultrafilter $\cF$ on $I$, the collection $(f_i)_{i\in I}$ induces a colimit preserving symmetric monoidal functor  
\[
\xymatrix{\Prod{\cF}^{\Pic}\cC_i \ar[r] & \Prod{\cF}^{\Pic}\cD_i.}
\]
\end{cor}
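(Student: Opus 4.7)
The plan is to package the previously established results, noting that the only essentially new content is checking the quasi-unit is preserved.

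First, I would observe that each symmetric monoidal functor $f_i$ sends invertible objects to invertible objects, so $f_i$ restricts to a map $\Pic(\cC_i) \to \Pic(\cD_i)$ of $\Pic$-generators. Applying \cref{cor:symmonfun} to the collection $(f_i)_{i \in I}$ with the choice $\cG_i = \Pic(\cC_i)$ and $\cH_i = \Pic(\cD_i)$ therefore yields a symmetric oidal (i.e.\ non-unital symmetric monoidal) functor
\[
\Prod{\cF}^{\Pic}f_i \colon \Prod{\cF}^{\Pic}\cC_i \longrightarrow \Prod{\cF}^{\Pic}\cD_i.
\]
By the construction of the protoproduct as $\Ind\Colim{d}\Prod{\cF}\PicCell_d(\cC_i^{\omega})$, and because each $f_i$ preserves compact objects and sends $\PicCell_d(\cC_i^{\omega})$ into $\PicCell_d(\cD_i^{\omega})$, this functor is the $\Ind$-extension of a filtered colimit of functors between small categories, hence automatically preserves all colimits.

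It remains to promote $\Prod{\cF}^{\Pic}f_i$ to a symmetric monoidal functor. By \cref{lem:luriequasiunit}, it suffices to check that it carries a quasi-unit to a quasi-unit. Let $u_{\cC}$ denote the quasi-unit for $\Prod{\cF}^{\Pic}\cC_i$ constructed in \cref{prop:picprotomonoidal}, obtained as
\[
u_{\cC} = \colim\bigl(\Prod{\cF}J_i \xrightarrow{\Prod{\cF}g_i} \Prod{\cF}\PicCell_d(\cC_i^{\omega}) \to \Prod{\cF}^{\Pic}\cC_i\bigr),
\]
where the filtered diagrams $J_i$ and functors $g_i$ pick out the unit $\mathbf{1}_{\cC_i}$ of $\cC_i$ in a uniformly bounded filtration degree $d$. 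Since each $f_i$ is symmetric monoidal, it carries $\mathbf{1}_{\cC_i}$ to $\mathbf{1}_{\cD_i}$, and we may use the same index categories $J_i$ (with filtration bound $d$, which works uniformly because both collections are unital). Since $\Prod{\cF}^{\Pic}f_i$ preserves colimits and the relevant filtered diagrams, it sends $u_{\cC}$ to the analogous colimit construction in $\Prod{\cF}^{\Pic}\cD_i$, which is precisely the quasi-unit $u_{\cD}$ produced by \cref{prop:picprotomonoidal}.

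The main (and essentially only) subtle step is the compatibility in the previous paragraph: one must verify that the two constructions of the quasi-unit line up under $\Prod{\cF}^{\Pic}f_i$. This is where it is important that both collections are unital with a common filtration bound (or, harmlessly, that we can enlarge to a common bound), since otherwise the filtered diagrams $J_i$ witnessing the quasi-unit on the source and target sides need not be comparable. Once this is in place, \cref{lem:luriequasiunit} finishes the proof by upgrading $\Prod{\cF}^{\Pic}f_i$ to a symmetric monoidal functor in a unique way.
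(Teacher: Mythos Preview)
Your proposal is correct and follows essentially the same approach as the paper: invoke \cref{cor:symmonfun} for the symmetric oidal structure, then use preservation of the quasi-unit from \cref{prop:picprotomonoidal} together with \cref{lem:luriequasiunit} to upgrade to a symmetric monoidal functor. The paper's proof is a two-line sketch of exactly this, so you have simply unpacked it. One small imprecision: the image of $u_{\cC}$ need not be literally \emph{the} quasi-unit $u_{\cD}$ chosen in \cref{prop:picprotomonoidal} for $\cD_i$ (which depended on an auxiliary choice of diagrams), but rather \emph{a} quasi-unit produced by that same argument applied to the transported diagrams $f_i\circ g_i$; since \cref{lem:luriequasiunit} only requires sending a quasi-unit to a quasi-unit, this is enough.
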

\begin{proof}
The definition of functors between unital Pic-generated $\infty$-categories guarantees that the $\Pic$-cell filtrations and the quasi-unit are preserved, hence by \Cref{cor:symmonfun} and \Cref{prop:picprotomonoidal} we obtain an induced functor between $\Pic$-protoproducts with the desired properties.
\end{proof}

The previous two corollaries admit a common generalization, which will be required in the construction of the comparison functor in \Cref{ssec:comparison}. The next lemmas will be useful in the proof.

\begin{lem}\label{lem:funfactory}
Let $I$ be a set and $\cF$ an ultrafilter on $I$. Further, let $(\cA_i)_{i\in I}$ and $(\cB_i)_{i \in I}$ be two collections of $\infty$-categories. Any collection of functors $(f_i\colon \cA_i \to \Ind\cB_i)_{i \in I}$ induces a functor
\[
\xymatrix{\Prod{\cF}\cA_i \ar[r] & \Ind \Prod{\cF}\cB_i}
\]
which, informally speaking, can be described as follows: Given $a_i \in \cA_i$, represent $f_i(a_i)$ by a filtered diagram $\cK_i \to \cB_i$. The image of $[a_i]_{\cF}$ is then the filtered diagram $\Prod{\cF}f_i(a_i)\colon  \Prod{\cF}\cK_i \to \Prod{\cF}\cB_i$. Moreover, if $(f_i\colon \cA_i \to \Ind\cB_i) \in \Alg_{\bE_{\infty}^{\text{nu}}}(\mathrm{Cat}_{\infty})$ for all $i \in I$, then we obtain a symmetric oidal functor
\[
\xymatrix{\Ind \Prod{\cF} \cA_i \ar[r] & \Ind \Prod{\cF} \cB_i}
\] 
by extending to the ind-category in the source.
\end{lem}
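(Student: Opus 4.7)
The plan is to build the desired functor as the composite
\[
\xymatrix{\Prod{\cF}\cA_i \ar[r]^-{\Prod{\cF}f_i} & \Prod{\cF}\Ind\cB_i \ar[r]^-{\Phi} & \Ind\Prod{\cF}\cB_i,}
\]
so that the bulk of the argument reduces to the construction of the comparison functor $\Phi$, since $\Prod{\cF}f_i$ is immediate from functoriality of the ultraproduct.

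To construct $\Phi$, I would use the presentation $\Prod{\cF}\Ind\cB_i \simeq \colim_{U\in\cF}\prod_{i\in U}\Ind\cB_i$ of the ultraproduct as a filtered colimit, and specify a compatible system of functors $\Phi_U\colon \prod_{i\in U}\Ind\cB_i \to \Ind\Prod{\cF}\cB_i$ indexed by $U\in\cF$. On objects, $\Phi_U$ sends a family $(X_i)_{i\in U}$ with canonical filtered presentations $X_i \simeq \colim_{k\in\cK_i}p_i(k)$ to the formal filtered colimit of the composite diagram
\[
\xymatrix{\prod_{i\in U}\cK_i \ar[r]^-{\prod_{i\in U} p_i} & \prod_{i\in U}\cB_i \ar[r] & \Prod{\cF}\cB_i.}
\]
The resulting object is well-defined in $\Ind\Prod{\cF}\cB_i$ because an arbitrary product of filtered $\infty$-categories is again filtered, and compatibility under restriction $V\subseteq U$ of the filter is automatic from the construction. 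Unwinding $\Phi\circ\Prod{\cF}f_i$ reproduces the informal description in the statement: a representative filtered diagram for $f_i(a_i)$ ultraproducts into the filtered diagram $\Prod{\cF}\cK_i \to \Prod{\cF}\cB_i$ representing the image of $[a_i]_{\cF}$.

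For the symmetric oidal refinement, I would observe that both $\Ind$ (via Day convolution, as in \cite[Section 4.8.1]{ha}) and the ultraproduct (being a filtered colimit of Cartesian products in $\Cat$) lift naturally to endofunctors on $\Alg_{\bE_\infty^{\text{nu}}}(\Cat)$. Consequently each stage of the composite $\Phi\circ\Prod{\cF}f_i$ inherits a nonunital symmetric monoidal structure whenever the $f_i$ do, and invoking the universal property of $\Ind$ in the source extends this composite to the desired nonunital symmetric monoidal filtered-colimit-preserving functor $\Ind\Prod{\cF}\cA_i \to \Ind\Prod{\cF}\cB_i$.

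The main obstacle is upgrading the objectwise formula for $\Phi_U$ into a coherent $\infty$-functor, so that different choices of filtered presentation for the $X_i$ yield canonically equivalent targets. I would handle this by modelling $\Ind\cB_i$ as the full subcategory of $\Fun(\cB_i^{\op},\cS)$ generated under filtered colimits by representables, so that $\Phi_U$ arises from the natural restriction and filtered-colimit extension along the structural map $\prod_{i\in U}\cB_i \to \Prod{\cF}\cB_i$; this presentation makes both the object-level formula above and its functoriality manifest.
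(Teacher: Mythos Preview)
Your approach is essentially the same as the paper's: the paper also factors the functor as $\Prod{\cF}\cA_i \to \Prod{\cF}\Ind\cB_i \xrightarrow{m} \Ind\Prod{\cF}\cB_i$, and then argues that both pieces are symmetric oidal. The only difference is that the paper does not reconstruct the comparison map $m$ or its oidal structure from scratch, but instead cites its construction from \cite[Section 3.3]{ultra1} and the (nonunital variant of the) symmetric monoidal enhancement from \cite[Corollary 3.26]{ultra1}; your presheaf model for $\Phi$ and Day-convolution argument are a reasonable stand-in for those citations.
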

\begin{proof}
The functor in the statement of the lemma is the composite
\[
\xymatrix{\Prod{\cF}\cA_i \ar[r] & \Prod{\cF} \Ind \cB_i \ar[r]^{m} & \Ind \Prod{\cF}\cB_i},
\]
where $m$ is the map constructed at the beginning of \cite[Section 3.3]{ultra1}. The first map is symmetric oidal as it is an ultraproduct of symmetric oidal functors.  The second functor is symmetric oidal functor by the proof of \cite[Corollary 3.26]{ultra1}, ignoring Condition (4) (the unit condition) of \cite{AFT} that plays a role in \cite[Lemma 3.25]{ultra1}. 
\end{proof}

\begin{prop}\label{prop:superfunctors}
Suppose $(\cG_i)_{i\in I}$ and $(\cH_i)_{i\in I}$ are unital collections of sets of Pic-generators for $(\cC_i)_{i \in I}$ and $(\cD_i)_{i \in I}$, respectively. Let $(f_i)_{i \in I}$ be a collection of functors $f_i \colon \cC_i \to \cD_i$ between stable $\infty$-categories satisfying the following properties:
	\begin{enumerate}
		\item For each $i \in I$, the functor $f_i$ is symmetric monoidal and preserves colimits.
		\item There exists a strictly increasing function $\beta\colon \N \to \N$ such that for all $i \in I$ and all $k \ge 0$, the functor $f_i$ restricts to a functor
		\[
		\xymatrix{f_{i,k}\colon \cG_i\Cell_k(\cC_i^{\omega}) \ar[r] & \Ind\cH_i\Cell_{\beta(k)}(\cD_i^{\omega}).}
		\]
	\end{enumerate}
For any ultrafilter $\cF$ on $I$, there exists a colimit preserving symmetric oidal functor 
\[
\xymatrix{\Prod{\cF}^{\Pic}f_i\colon \Prod{\cF}^{\Pic}(\cC_i,\cG_i) \ar[r] & \Prod{\cF}^{\Pic}(\cD_i,\cH_i).}
\]
\end{prop}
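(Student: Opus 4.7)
The plan is to construct the functor one filtration step at a time, then assemble, using \Cref{lem:funfactory} as the main tool. Note that hypothesis (2) yields, for each fixed $k$, a collection of functors $f_{i,k}\colon \cG_i\Cell_k(\cC_i^{\omega}) \to \Ind\cH_i\Cell_{\beta(k)}(\cD_i^{\omega})$ which are compatible with the filtration inclusions as $k$ varies. Applying \Cref{lem:funfactory} to this collection produces, for each $k$, a functor
\[
\varphi_k\colon \Prod{\cF}\cG_i\Cell_k(\cC_i^{\omega}) \longrightarrow \Ind\Prod{\cF}\cH_i\Cell_{\beta(k)}(\cD_i^{\omega}),
\]
and these functors are compatible with increasing $k$ by naturality of the construction in \Cref{lem:funfactory}.

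Next I would take the colimit over $k$. Because $\beta$ is strictly increasing, the sequence $(\beta(k))_{k\ge 0}$ is cofinal in $\N$, so the colimit over $k$ of the targets can be identified, using that $\Ind$ commutes with filtered colimits of small $\infty$-categories, with $\Ind\colim_k\Prod{\cF}\cH_i\Cell_k(\cD_i^{\omega})$. Thus $\colim_k\varphi_k$ is a functor
\[
\colim_k\Prod{\cF}\cG_i\Cell_k(\cC_i^{\omega}) \longrightarrow \Ind\colim_k\Prod{\cF}\cH_i\Cell_k(\cD_i^{\omega}) = \Prod{\cF}^{\Pic}(\cD_i,\cH_i).
\]
Since the target is closed under filtered colimits, extending along $\Ind$ on the source produces the desired colimit preserving functor
\[
\Prod{\cF}^{\Pic}f_i\colon \Prod{\cF}^{\Pic}(\cC_i,\cG_i) = \Ind\colim_k\Prod{\cF}\cG_i\Cell_k(\cC_i^{\omega}) \longrightarrow \Prod{\cF}^{\Pic}(\cD_i,\cH_i).
\]

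For the symmetric oidal structure, I would invoke the second half of \Cref{lem:funfactory}. The fact that $f_i$ is symmetric monoidal, combined with the symmetric oidality of the Pic-cell filtrations, means that the restriction $f_{i,k}$ assembles, as $k$ varies, into a morphism of $\bE_{\infty}^{\mathrm{nu}}$-algebras in the appropriate functor $\infty$-category. Hence each $\varphi_k$ is symmetric oidal, in a way compatible with the transition maps, so the colimit $\colim_k\varphi_k$ is symmetric oidal, and extending along $\Ind$ (which is symmetric monoidal on compactly generated $\infty$-categories) preserves this structure.

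The main subtle point is the coherence of the symmetric oidal structure, since $\beta$ need not satisfy $\beta(k+l)\geq\beta(k)+\beta(l)$: the object $f_i(x\otimes y) \simeq f_i(x)\otimes f_i(y)$ may a priori sit at different filtration levels on the two sides. This is resolved by working in the common enlargement $\cH_i\Cell_{\max(\beta(k)+\beta(l),\beta(k+l))}$ and passing to the colimit, after which the ambiguity disappears. Everything else is formal and follows the template of the proofs of \Cref{cor:symmonfun} and \Cref{cor:picprotofun}.
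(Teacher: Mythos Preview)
Your construction of the underlying functor is essentially identical to the paper's: apply \Cref{lem:funfactory} to $(f_{i,k})_{i\in I}$ for each fixed $k$, then pass to the colimit over $k$ (using that $\beta$ is strictly increasing, hence cofinal) and Ind-extend. The only cosmetic difference is that the paper Ind-extends first and then takes the colimit in $\Catomega$, while you take the colimit and then Ind-extend; these agree.

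Where you and the paper diverge is in the argument for the symmetric oidal structure. You try to promote the system $(\varphi_k)_k$ to a symmetric oidal functor by invoking the second clause of \Cref{lem:funfactory} levelwise, and you correctly flag the obstacle: the individual filtration pieces $\cG_i\Cell_k(\cC_i^\omega)$ are not closed under $\otimes$, so the $f_{i,k}$ are not maps of $\bE_\infty^{\mathrm{nu}}$-algebras and that clause does not literally apply. Your proposed workaround (pass to a common enlargement $\cH_i\Cell_{\max(\beta(k)+\beta(l),\beta(k+l))}$ and then take the colimit) is plausible in spirit but is not a proof as written: one would need to organize the entire filtered diagram as a single morphism of nonunital $\bE_\infty$-algebras in an appropriate functor category, and you have not done this.

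The paper avoids this bookkeeping entirely. Rather than arguing at each filtration level, it observes that the constructed functor sits in a commutative square
\[
\xymatrix{ \Prod{\cF}^{\Pic}(\cC_i,\cG_i) \ar[r]^-{\Prod{\cF}^{\Pic}f_i} \ar[d] & \Prod{\cF}^{\Pic}(\cD_i,\cH_i) \ar[d] \\
\Prod{\cF}^{\omega}\cC_i \ar[r] & \Prod{\cF}^{\omega}\cD_i,}
\]
in which the vertical arrows are the fully faithful symmetric oidal inclusions of \Cref{lem:nsymmmon}, and the bottom arrow is symmetric oidal by applying \Cref{lem:funfactory} once to the genuinely oidal functors $f_i|_{\cC_i^\omega}\colon \cC_i^\omega \to \Ind(\cD_i^\omega)$. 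Since the verticals are fully faithful and oidal, the oidal structure on the bottom restricts to one on the top. This sidesteps any interaction between $\beta$ and the multiplicative structure of the filtration.
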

\begin{proof}
We start with the construction of the desired functor. For fixed $k \ge 0$, we may apply \Cref{lem:funfactory} to the collection $(f_{i,k})_{i \in I}$ to obtain a functor
\[
\xymatrix{\Prod{\cF}\cG_i\Cell_k(\cC_i^{\omega}) \ar[r] & \Ind\Prod{\cF}\cH_i\Cell_{\beta(k)}(\cD_i^{\omega}).}
\]
Extending this functor to the ind-category of the source and passing to colimits over $k$ in $\mathrm{Cat}_{\infty}^{\omega}$ then yields a colimit preserving functor
\[
\Prod{\cF}^{\Pic}(\cC_i,\cG_i) \simeq  \Colim{k}\Ind \Prod{\cF}\cG_i\Cell_k(\cC_i^{\omega}) \to\Colim{k}  \Ind  \Prod{\cF}\cH_i\Cell_{\beta(k)}(\cD_i^{\omega}) \simeq \Prod{\cF}^{\Pic}(\cD_i,\cH_i)
\]
which we denote by $\Prod{\cF}^{\Pic}f_i$. 

It remains to verify the symmetric monoidal properties of this functor. To this end, first observe that $\Prod{\cF}^{\Pic}f_i$ fits into a commutative square
\[
\xymatrix{ \Prod{\cF}^{\Pic}(\cC_i,\cG_i) \ar[r]^-{\Prod{\cF}^{\Pic}f_i} \ar[d] & \Prod{\cF}^{\Pic}(\cD_i,\cH_i) \ar[d] \\
\Prod{\cF}^{\omega}\cC_i \ar[r] &  \Prod{\cF}^{\omega}\cD_i,}
\]
in which the bottom functor is symmetric oidal by \Cref{lem:funfactory} and the vertical arrows are fully faithful and symmetric oidal by \Cref{lem:nsymmmon}. This implies that $\Prod{\cF}^{\Pic}f_i$ is symmetric oidal as well. 
\end{proof}

\begin{cor}
Under the assumptions of \cref{prop:superfunctors}, if additionally the unit in $\cC_i$ is compact, then $\Prod{\cF}^{\Pic}f_i$ is unital.
\end{cor}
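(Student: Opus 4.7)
The plan is to show that $\Prod{\cF}^{\Pic}f_i$ sends a quasi-unit of its source to a quasi-unit of its target; by \cref{lem:luriequasiunit}, this will promote the symmetric oidal functor constructed in \cref{prop:superfunctors} to a genuine symmetric monoidal, i.e., unital, functor.

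First, I would exploit the compactness of $\mathbf{1}_{\cC_i}$ to obtain a particularly simple quasi-unit on the source side. Since $(\cG_i)_{i\in I}$ is unital, there exists some $d$ with $\mathbf{1}_{\cC_i}\in \Ind(\cG_i\Cell_d(\cC_i^{\omega}))$ for every $i$, and compactness of the unit upgrades this to $\mathbf{1}_{\cC_i}\in \cG_i\Cell_d(\cC_i^{\omega})$. In the construction of \cref{prop:picprotomonoidal}, we may thus take the filtered indexing category $J_i=\ast$ for each $i$, so that the resulting quasi-unit of $\Prod{\cF}^{\Pic}(\cC_i,\cG_i)$ is simply $u_{\cC}=[\mathbf{1}_{\cC_i}]_{\cF}$, viewed as the image of $\mathbf{1}_{\cC_i}\in \cG_i\Cell_d(\cC_i^{\omega})$ under the natural functor to the protoproduct.

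Next I would track $u_{\cC}$ through the construction of $\Prod{\cF}^{\Pic}f_i$ given in the proof of \cref{prop:superfunctors}. On the level of filtration step $d$, the functor is built from $f_{i,d}\colon \cG_i\Cell_d(\cC_i^{\omega})\to \Ind\cH_i\Cell_{\beta(d)}(\cD_i^{\omega})$, and because $f_i$ is symmetric monoidal one has $f_{i,d}(\mathbf{1}_{\cC_i})\simeq \mathbf{1}_{\cD_i}$ as an object of $\Ind\cH_i\Cell_{\beta(d)}(\cD_i^{\omega})$. Choose a filtered diagram $K_i\to \cH_i\Cell_{\beta(d)}(\cD_i^{\omega})$ representing this ind-object. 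Applying \cref{lem:funfactory}, the image of $[\mathbf{1}_{\cC_i}]_{\cF}$ in $\Ind\Prod{\cF}\cH_i\Cell_{\beta(d)}(\cD_i^{\omega})$ is represented by $\Prod{\cF}K_i\to \Prod{\cF}\cH_i\Cell_{\beta(d)}(\cD_i^{\omega})$, and after passing to the colimit in $\Prod{\cF}^{\Pic}(\cD_i,\cH_i)$ we obtain
\[
(\Prod{\cF}^{\Pic}f_i)(u_{\cC})\simeq \colim\bigl(\Prod{\cF}K_i\to \Prod{\cF}\cH_i\Cell_{\beta(d)}(\cD_i^{\omega})\to \Prod{\cF}^{\Pic}(\cD_i,\cH_i)\bigr).
\]
By inspection, this is exactly of the shape of the quasi-unit constructed in \cref{prop:picprotomonoidal} for the target $\Prod{\cF}^{\Pic}(\cD_i,\cH_i)$, using the filtered representations $K_i$ of $\mathbf{1}_{\cD_i}$ and the fact (\cite[Proposition 3.19]{ultra1}) that $\Prod{\cF}K_i$ is filtered.

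Finally, by \cref{lem:luriequasiunit}, any symmetric oidal functor between symmetric monoidal $\infty$-categories that sends a quasi-unit to a quasi-unit extends uniquely to a symmetric monoidal functor, and this extension is automatically unital. Applying this to $\Prod{\cF}^{\Pic}f_i$ yields the corollary. The only real content here is the identification of the image of the simple quasi-unit $[\mathbf{1}_{\cC_i}]_{\cF}$ with the canonical quasi-unit on the target; I expect this bookkeeping---tracing the colimit through the construction of the functor in \cref{prop:superfunctors}---to be the main (if minor) obstacle, and it rests essentially on the naturality of \cref{lem:funfactory}.
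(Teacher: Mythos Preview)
Your proof is correct and follows essentially the same route as the paper: identify the source quasi-unit as $[\mathbf{1}_{\cC_i}]_{\cF}$, trace its image through the construction of \cref{prop:superfunctors} via \cref{lem:funfactory}, recognize the result as an instance of the quasi-unit built in the proof of \cref{prop:picprotomonoidal}, and conclude with \cref{lem:luriequasiunit}. One minor simplification: you need not invoke unitality of $(\cG_i)$ together with compactness to place $\mathbf{1}_{\cC_i}$ in some $\cG_i\Cell_d(\cC_i^{\omega})$---since $\cG_i$ is a subgroup of $\pi_0\Pic(\cC_i)$ it already contains the unit, so compactness alone gives $\mathbf{1}_{\cC_i}\in\cG_i\Cell_1(\cC_i^{\omega})$, which is exactly what the paper uses.
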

\begin{proof}
Let $u_i \in \cC_{i}^{\omega}$ be the unit. Consider the commutative diagram
\[
\xymatrix{\ast \ar[r]^-{u_i} & \cG_i\Cell_1(\cC_i^{\omega}) \ar[r] \ar[d] & \Ind\cH_i\Cell_{\beta(1)}(\cD_i^{\omega}) \ar[d] \\
& \cC_i \ar[r]_-{f_i} & \cD_i,}
\]
where the vertical functors are fully faithful. Since $f_i$ is unital, the top composite thus gives a representation of the unit in $\cD_i$. Applying ultraproducts to this diagram and the map in \cref{lem:funfactory} gives the following commutative diagram
\[
\xymatrix{\ast \ar[r]^-{\Prod{\cF}u_i} \ar[rd]_u & \Prod{\cF}\cG_i\Cell_1(\cC_i^{\omega}) \ar[r] \ar[d] & \Ind\Prod{\cF}\cH_i\Cell_{\beta(1)}(\cD_i^{\omega}) \ar[d] \\
& \Prod{\cF}^{\Pic}(\cC_i,\cG_i) \ar[r]_-{\Prod{\cF}^{\Pic}f_i} & \Prod{\cF}^{\Pic}(\cD_i,\cH_i).}
\]
The proof of \Cref{prop:picprotomonoidal} and the description of the functor in \cref{lem:funfactory} shows that the composite along the top is a quasi-unit for $\Prod{\cF}^{\Pic}(\cD_i,\cH_i)$. In other words, we see that $\Prod{\cF}^{\Pic}f_i$ preserves quasi-units, so \Cref{lem:luriequasiunit} furnishes the claim.
\end{proof}

\subsection{$\Pic$-generated protoproducts are $\Pic$-generated}

Our next goal is to prove that under the assumptions that $(\cC_i)_{i\in I}$ is a unital collection of Picard-generated $\infty$-categories in the sense of \Cref{def:uniformunital}, the Pic-generated protoproduct $\Prod{\cF}^{\Pic}\cC_i$ is in fact generated by its Picard $\infty$-groupoid.

\begin{lem} \label{superlemma}
Assume $\cC$ is a symmetric monoidal $\infty$-category. Then $\Pic(\cC) \otimes \cC^{\omega} = \cC^{\omega}$.
\end{lem}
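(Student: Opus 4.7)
The plan is to prove the equality of the two full subcategories by establishing the two obvious inclusions. Interpreted literally, $\Pic(\cC) \otimes \cC^{\omega}$ denotes the full subcategory of $\cC$ spanned by objects of the form $P \otimes X$ with $P \in \Pic(\cC)$ and $X \in \cC^{\omega}$.

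For the inclusion $\cC^{\omega} \subseteq \Pic(\cC) \otimes \cC^{\omega}$, I would simply observe that the unit $\mathbf{1}_{\cC}$ is invertible and hence belongs to $\Pic(\cC)$. Therefore, for any $X \in \cC^{\omega}$, we have $X \simeq \mathbf{1}_{\cC} \otimes X \in \Pic(\cC)\otimes \cC^{\omega}$.

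For the reverse inclusion $\Pic(\cC) \otimes \cC^{\omega} \subseteq \cC^{\omega}$, the key point is that tensoring with an invertible object preserves compactness. Given $P \in \Pic(\cC)$, the endofunctor $P \otimes -\colon \cC \to \cC$ admits the right adjoint $P^{-1} \otimes -$. But $P^{-1}$ is itself invertible with inverse $P$, so $P^{-1}\otimes -$ is also a left adjoint and hence preserves all colimits, in particular filtered colimits. By the standard adjunction criterion, this implies that its left adjoint $P \otimes -$ preserves compact objects. Thus for any $X \in \cC^{\omega}$, the object $P \otimes X$ lies in $\cC^{\omega}$.

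There is no real obstacle here; the argument is purely formal and relies only on the defining property of invertible objects, together with the adjoint functor characterization of preservation of compact objects. The only subtlety worth flagging is the interpretation of the notation $\Pic(\cC) \otimes \cC^{\omega}$, which must be taken as the full subcategory on such tensor products (rather than, say, the thick subcategory or localizing ideal they generate), since otherwise the statement would coincide trivially with the existing inclusion $\cC^{\omega} \subseteq \Thick(\Pic(\cC))$ used in the proof of \Cref{cor:savetheday}.
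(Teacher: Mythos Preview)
Your proof is correct and essentially matches the paper's. The paper only spells out the nontrivial inclusion $\Pic(\cC)\otimes\cC^{\omega}\subseteq\cC^{\omega}$, arguing that for $X\in\Pic(\cC)$ the functor $X\otimes-$ is an equivalence of $\infty$-categories and equivalences preserve compact objects; your version via the adjunction criterion (the right adjoint $P^{-1}\otimes-$ is itself a left adjoint, hence preserves filtered colimits) is a reformulation of the same observation.
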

\begin{proof}
Let $X \in \Pic(\cC)$, then $X \otimes -$ is an equivalence of $\infty$-categories. Since an equivalence sends compact objects to compact objects, we see that $X \otimes W$ is compact for any compact $W \in \cC^{\omega}$.
\end{proof}

In the proof of the next proposition we will make use of $(\mathrm{Cat}_{\infty}^{\textrm{perf}}, \square, \Sp^{\omega})$, the symmetric monoidal $\infty$-category of small idempotent complete stable $\infty$-categories and exact functors (see \cite[Section 3.1]{bgt} for more details). We will also make use of $(\mathrm{Cat}_{\infty}^{\omega, \mathrm{st}}, \boxtimes, \Sp)$, the $\infty$-category of compactly generated stable $\infty$-categories and colimit and compact object preserving maps. These symmetric monoidal structures are closely related: given $\cC$ and $\cD$ in $\mathrm{Cat}_{\infty}^{\omega, \mathrm{st}}$, we have
\[
\cC \boxtimes \cD \simeq \Ind(\cC^{\omega} \square \cD^{\omega}).
\]

\begin{prop}\label{prop:picproto}
Let  $(\cC_i)_{i\in I}$ be a unital collection of Pic-generated $\infty$-categories  and $\cF$ an ultrafilter on $I$, then $\Prod{\cF}^{\Pic}\cC_i$ is a unital Pic-generated $\infty$-category, i.e., the canonical inclusion functor
\[
\xymatrix{\Loc\Pic\Prod{\cF}^{\Pic}\cC_i \ar[r]^-{\sim} & \Prod{\cF}^{\Pic}\cC_i}
\]
is an equivalence of symmetric monoidal $\infty$-categories.
\end{prop}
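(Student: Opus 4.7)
The plan is to reduce to showing that every compact object of $\Prod{\cF}^{\Pic}\cC_i$ lies in $\Thick(\Pic(\Prod{\cF}^{\Pic}\cC_i))$. Since $\Prod{\cF}^{\Pic}\cC_i$ is by construction the ind-completion of $\Colim{d}\Prod{\cF}\PicCell_d(\cC_i^{\omega})$, and hence compactly generated by the images of $\Prod{\cF}\PicCell_d(\cC_i^{\omega})$ for varying $d$, this reduction is immediate. The work is then to handle those compact generators.

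First I would verify that $\Prod{\cF}\Pic(\cC_i) \subseteq \pi_0\Pic(\Prod{\cF}^{\Pic}\cC_i)$. For $[L_i] \in \Prod{\cF}\Pic(\cC_i)$, the candidate inverse is $[L_i^{-1}]$, and by the coordinate-wise description of the symmetric oidal product on the protoproduct given by \cref{lem:nsymmmon} and the Cayley factorization of \cref{lem:triangle}, one has $[L_i] \otimes [L_i^{-1}] \simeq [L_i \otimes_{\cC_i} L_i^{-1}] = [\mathbf{1}_{\cC_i}]$. By the unital assumption, $\mathbf{1}_{\cC_i}$ admits a uniformly bounded-depth filtered presentation by $\PicCell_d$-objects, and the resulting class is exactly the quasi-unit $u$ produced in \cref{prop:picprotomonoidal}, which by \cref{cor:picprotomonoidal} is the monoidal unit. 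Hence $[L_i]$ is invertible.

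Next I would argue by induction on $d$ that, inside $\Prod{\cF}^{\Pic}\cC_i$,
\[
\Prod{\cF}\PicCell_d(\cC_i^{\omega}) \subseteq \Thick\bigl(\Prod{\cF}\Pic(\cC_i)\bigr).
\]
The base case reduces to the previous paragraph, since the first stage of the $\Pic$-cell filtration consists of (retracts of) shifts of elements of $\Pic(\cC_i)$, which are themselves in $\Pic(\cC_i)$. For the inductive step, any $[X_i] \in \Prod{\cF}\PicCell_{d+1}(\cC_i^{\omega})$ is represented by a sequence in which, for $\cF$-almost every $i$, $X_i$ is a retract of a cofiber of two objects in $\PicCell_d(\cC_i^{\omega})$. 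The ultraproduct of stable $\infty$-categories is stable and computes finite (co)limits and retracts coordinate-wise, so $[X_i]$ is a retract of a cofiber of objects coming from $\Prod{\cF}\PicCell_d(\cC_i^{\omega})$, which by induction are in $\Thick(\Prod{\cF}\Pic(\cC_i))$. Passing to $\Ind\Colim{d}$ then produces the desired containment $\Loc\Pic \Prod{\cF}^{\Pic}\cC_i = \Prod{\cF}^{\Pic}\cC_i$, and this containment is an equality of symmetric monoidal $\infty$-categories because the inclusion is symmetric monoidal by construction.

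The main obstacle I expect is the first step: verifying that the coordinate-wise product $[L_i] \otimes [L_i^{-1}]$ really identifies with the quasi-unit of \cref{prop:picprotomonoidal}, rather than with some a~priori different filtered representative of $[\mathbf{1}_{\cC_i}]$. This is where the unital hypothesis is essential, as it ensures a uniform filtration depth, and where the factorization of the Cayley map in \cref{lem:triangle} must be invoked to ensure that the monoidal product on the protoproduct genuinely reflects the coordinate-wise product at the level of compact representatives. The inductive step, by contrast, is a fairly routine consequence of the stability of the ultraproduct and the definition of the $\Pic$-cell filtration.
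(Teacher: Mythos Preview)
Your strategy has the right shape, but both the base case and the inductive step contain a genuine gap, and they stem from the same source: the filtration $\PicCell_d(\cC_i^{\omega})$ is defined as an \emph{intersection} $\Cell_d\Thick(\Pic(\cC_i)) \cap \cC_i^{\omega}$, not as an inductive cellular filtration on $\cC_i^{\omega}$ itself.

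For the base case: the objects $L_i \in \Pic(\cC_i)$ need not be compact; in the motivating example $\cC_i = \cSpnp$ with $n>0$, no nonzero invertible object is compact, and $\PicCell_1(\cC_i^{\omega})$ consists only of the zero object. The symbols $[L_i]$ and $[L_i^{-1}]$ therefore do not denote objects of $\Prod{\cF}^{\Pic}\cC_i$ at all, and there is no coordinate-wise product $[L_i]\otimes[L_i^{-1}]$ to compare with the quasi-unit. The obstacle you flag is thus more serious than reconciling two filtered presentations of the unit: you must first \emph{construct} an object of the protoproduct playing the role of $[L_i]$ and then prove it invertible.

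For the inductive step: if $X_i \in \PicCell_{d+1}(\cC_i^{\omega})$ is presented in $\Cell_{d+1}\Thick(\Pic(\cC_i))$ as a retract of a cofiber of $Y_i, Z_i \in \Cell_d\Thick(\Pic(\cC_i))$, there is no reason for $Y_i$ or $Z_i$ to be compact, so they need not lie in $\PicCell_d(\cC_i^{\omega})$ and do not define classes in the protoproduct either.

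The paper resolves both issues simultaneously by introducing the auxiliary protoproduct $\CC = \Prod{\cF}^{\Pic}\Ind\Thick\Pic(\cC_i)$, in which the invertibles \emph{are} compact (so $[L_i]$ makes sense and your cellular induction goes through there), and then building a symmetric monoidal functor $\cL\colon \CC \to \cC$ with conservative right adjoint. This $\cL$ is not a levelwise functor (it does not preserve compacts); it is extracted from an action map $\CC \boxtimes \cC \to \cC$ arising from the fully faithful inclusion $\iota\colon \cC^{\omega} \hookrightarrow \CC^{\omega}$, and conservativity of its right adjoint follows from $\cL \circ \Ind(\iota) \simeq \id_{\cC}$. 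This functor is exactly the missing ingredient your sketch needs: a mechanism to transport the valid cellular argument in $\CC$ back to $\cC$.
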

\begin{proof}
Throughout this proof, we write $\cC = \Prod{\cF}^{\Pic}\cC_i$ and $\CC$ for the Pic-generated protoproduct $\Prod{\cF}^{\Pic}\Ind\Thick\Pic(\cC_i)$ of the compactly Pic-generated $\infty$-categories $\Ind\Thick\Pic(\cC_i)$. In order to show that $\Pic\Prod{\cF}^{\Pic}\cC_i$ provides a collection of  generators for $\cC$, we will construct a symmetric monoidal functor
\[
\xymatrix{\cL\colon \CC  = \Prod{\cF}^{\Pic}\Ind\Thick\Pic(\cC_i) \ar[r] & \Prod{\cF}^{\Pic}\cC_i = \cC}
\]
with a conservative right adjoint $\cR$. It follows that $\cL(\Pic(\CC))$ forms a collection of generators for $\cC$ (\cite[Lemma 2.25]{arthur}). Because $\cL$ is symmetric monoidal, $\cL(\Pic(\CC)) \subseteq \Pic(\cC)$. Moreover, given $X \in \cC^{\omega}$, there exists a natural number $d$ such that 
\[
X \in \Prod{\cF}\PicCell_d(\cC_i^{\omega}) \subseteq \Prod{\cF}\Cell_d\Thick(\Pic(\cC_i)).
\] 
Therefore, $X \in \Thick(\Pic(\cC))$, hence $\cC^{\omega} \subseteq \Thick(\Pic(\cC))$. This proves the proposition. 

We begin with the construction of $\cL$. Observe that $\CC$ is a symmetric monoidal stable $\infty$-category compactly generated by its Picard $\infty$-groupoid and in particular has a compact unit $\mathbf{1}_{\CC}$ (see \cite[Corollary 3.59]{ultra1}). For any $s\ge 0$, the construction of the Pic-cell filtration comes with an inclusion
\[
\xymatrix{\PicCell_{i,s}\cC_i \ar[r] & \Cell_{i,s}\Thick(\Pic\cC_i)}
\]
that extends to a fully faithful and oidal functor $\iota\colon \cC^{\omega} \to \CC^{\omega}$ as ultraproducts preserve fully faithfulness of functors by \cite[Corollary 3.15]{ultra1}. The first step in the construction of $\cL$ is to produce a factorization as indicated in the next diagram:
\begin{equation}\label{eq:action}
\xymatrix{\Sp^{\omega} \square \cC^{\omega} \ar[r]^-{\mathbf{1}_{\CC}\square \id} & \CC^{\omega} \square \cC^{\omega} \ar@{-->}[r]^-{\cA^{\omega}} \ar[d]_{\id\square \iota} & \cC^{\omega} \ar[d]^{\iota} \\
& \CC^{\omega} \square \CC^{\omega} \ar[r]_-{\otimes} & \CC^{\omega}.}
\end{equation}
Here, we have written $\otimes$ for the symmetric monoidal structure of $\CC$. Furthermore, $\mathbf{1}_{\CC}$ denotes the canonical finite colimit preserving functor $\Sp^{\omega} \to \CC$ which sends $S^0$ to $\mathbf{1}_{\CC}$.

To see that the desired factorization $\cA^{\omega}$ in $\eqref{eq:action}$ exists, it suffices to check the claim objectwise since $\iota$ is fully faithful. Indeed, unwinding the construction of the $\Pic$-cell filtration, this reduces to the fact that $\Pic(\cC_i) \otimes \cC_i^{\omega} \subseteq \Thick \Pic(\cC_i)$ is contained in $\cC_i^{\omega}$, which follows from \cref{superlemma}. Furthermore, note that the composite $\cA^{\omega} \circ (\mathbf{1}_{\CC}\boxtimes \id)$ of the top horizontal functors in \eqref{eq:action} is equivalent to the identity on $\cC^{\omega}$.

By \Cref{cor:picprotomonoidal}, $\cC$ is symmetric monoidal and we denote the (not necessarily compact) unit by $\mathbf{1}_{\cC}$. The second step in the proof is to show that the ind-extension of $\cA^{\omega}$, i.e., the functor
\[
\xymatrix{\cA\colon \CC \boxtimes \cC \simeq \Ind(\CC^{\omega} \square \cC^{\omega}) \ar[r]^-{\Ind(\cA^{\omega})} & \Ind(\cC^{\omega}) \simeq \cC}
\]
is symmetric monoidal. The first equivalence in the equation follows from specializing Remark 4.8.1.8 in \cite{ha} to $\cK = \{\text{finite simplicial sets}\}$ and $\cK' = \{\text{all simplicial sets}\}$. To prove that $\cA$ is symmetric monoidal, first note that $\cA$ has an oidal structure, since $\cA^{\omega}$ does so as the restriction of a symmetric monoidal functor along the oidal inclusion $\iota$. In view of \cite[Corollary 5.4.4.7]{ha}, it remains to check that $\cA$ sends a quasi-unit on $\CC \boxtimes \cC$ to a quasi-unit on $\cC$. But we have already seen that $\cA \circ (\mathbf{1}_{\CC}\boxtimes \id)$ is equivalent to the identity functor on $\cC$, hence $\cA(\mathbf{1}_{\CC} \otimes \mathbf{1}_{\cC}) \simeq \mathbf{1}_{\cC}$.

Finally, we define the desired functor $\cL$ as the following composite
\[
\xymatrix{\cL\colon \CC \simeq \CC \boxtimes \Sp \ar[r]^-{\id_{\CC} \boxtimes \mathbf{1}_{\cC}} & \CC \boxtimes \cC \ar[r]^-{\cA} & \cC.}
\]  
As a composite of symmetric monoidal functors, $\cL$ has the structure of a symmetric monoidal functor as well, so it remains to show that $\cL$ admits a conservative right adjoint. Both $\cI = \Ind(\iota)$ and $\cL$ preserve colimits, so there is a diagram of adjunctions
\[
\xymatrix{\cC \ar@<0.5ex>[r]^-{\cI} & \CC \ar@<0.5ex>[r]^-{\cL} \ar@<0.5ex>[l]^-{\cJ} & \cC \ar@<0.5ex>[l]^-{\cR}}
\]
with the left adjoints displayed on top. By virtue of the commutative diagram
\[
\xymatrix{\cC \ar[r]^-{\id_{\cC}\boxtimes\mathbf{1}_{\cC}} \ar[d]_{\cI} & \cC \boxtimes \cC \ar[r]^-{\otimes} \ar[d]|-{\cI \boxtimes \id_{\cC}} & \cC \ar[d]^{\id_{\cC}} \\
\CC \ar[r]_{\id_{\CC} \boxtimes \mathbf{1}_{\cC}} & \CC \boxtimes \cC \ar[r]_-{\cA} & \cC}
\]
the composite $\cL \circ \cI$ is equivalent to $\id_{\cC}$, hence $\cJ \circ \cR \simeq \id_{\cC}$ as well. It follows that $\cR$ is conservative, which concludes the proof. 
\end{proof}

As in the proof of \Cref{prop:picproto}, the symmetric monoidal structure on a $\Pic$-generated $\infty$-category $\cC$ gives rise to a restricted action map $\cA\colon \Pic(\cC) \boxtimes \cC^{\omega} \to \cC^{\omega}$. By construction of the $\Pic$-cell filtration and \cref{superlemma}, the adjoint of $\cA$ preserves the filtration, i.e., there is a factorization
\[
\xymatrix{\Pic(\cC) \ar[r] \ar@{-->}[rd]_{\Cayley_{\Pic(\cC)}} & \Fun(\cC^{\omega},\cC^{\omega}) \\
& \Fun^{\id_{\N}}(\cC^{\omega},\cC^{\omega}). \ar[u]}
\]
In particular, any invertible object $P \in \Pic(\cC)$ gives rise to an endofunctor $\Cayley_{\Pic(\cC)}(P)$ of $\PicCell_{s}\cC^{\omega}$ for any $s\ge 0$.

\begin{defn}\label{defn:unformsep}
A unital collection of Pic-generated $\infty$-categories $(\cC_i)_{i \in I}$ is said to be uniformly separated if the associated set of restricted Cayley functors $(\Cayley_{\Pic(\cC_i)})$ has the property that there exists $m \ge 0$ independent of $i\in I$ such that $P_i \in \Pic(\cC_i)$ is trivial if and only if 
\[
\xymatrix{\pi_0\Cayley_{\Pic(\cC_i)}(P_i)\colon \pi_0((\PicCell_{m,i}\cC_i^{\omega})^{\circ}) \ar[r] & \pi_0((\PicCell_{m,i}\cC_i^{\omega})^{\circ}})
\]
is the identity map. Here, the superscript $\circ$ indicates the maximal $\infty$-subgroupoid. 
\end{defn}

The purpose of this definition is to provide a condition which implies that the $\Pic$-generated protoproduct generically detects invertible objects in $(\cC_i)_{i \in I}$, in the sense of the following result:

\begin{prop}\label{prop:picdetection}
Let $(\cC_i)_{i \in I}$ be a uniformly separated unital collections of Pic-generated $\infty$-categories, then the functor of $\infty$-groupoids
\[
\xymatrix{\cL_{\Pic}\colon \Prod{\cF}\Pic(\cC_i) \simeq \Pic(\Prod{\cF}^{\Pic}\Ind\Thick\Pic(\cC_i)) \ar[r]^-{\Pic(\cL)} & \Pic\Prod{\cF}^{\Pic}\cC_i}
\]
is injective on $\pi_0$. Note that the first equivalence follows from \cite[Lemma 5.15]{ultra1}. 
\end{prop}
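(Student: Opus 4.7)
The plan is to combine the uniform separation condition from \Cref{defn:unformsep} with a {\L}o{\'s}-style principle identifying $\pi_0$ of the maximal $\infty$-subgroupoid of a cell protoproduct with the ultraproduct of the $\pi_0$'s of maximal $\infty$-subgroupoids of the factors.

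Suppose $[P_i] \in \Prod{\cF}\Pic(\cC_i)$ maps to the trivial element of $\pi_0\Pic\Prod{\cF}^{\Pic}\cC_i$. Writing $\cC = \Prod{\cF}^{\Pic}\cC_i$ and using that $\cL$ is symmetric monoidal by the proof of \Cref{prop:picproto}, this means $\cL([P_i]) \simeq \mathbf{1}_{\cC}$, so the Cayley functor $\Cayley_{\cF}(\cL([P_i])) \simeq \cL([P_i]) \otimes -$ is equivalent to $\id_\cC$. In particular, its restriction to the full subcategory $\Prod{\cF}\PicCell_{m,i}\cC_i^{\omega} \subseteq \cC^{\omega}$ is equivalent to the identity, where $m$ is the uniformity constant from \Cref{defn:unformsep}. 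This restriction is well-defined because, as noted in the paragraph preceding \Cref{defn:unformsep}, invertible objects preserve the $\Pic$-cell filtration.

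Next I would identify this restricted Cayley action with the ultraproduct $\Prod{\cF}\Cayley_{\Pic(\cC_i)}(P_i)$ of the individual restricted Cayley endofunctors. This uses \Cref{lem:triangle} applied with $\beta_k = \id$ together with an unwinding of the construction of $\cL$ as an ind-extension in the proof of \Cref{prop:picproto}. Passing to $\pi_0$ of maximal $\infty$-subgroupoids and invoking the standard identification of $\pi_0$ of an ultraproduct of small $\infty$-categories with the ultraproduct of $\pi_0$'s (of the kind developed in \cite[Section 3]{ultra1}) yields a commutative square in which the action of $\cL([P_i])$ on $\Prod{\cF}\pi_0((\PicCell_{m,i}\cC_i^{\omega})^{\circ})$ coincides with the ultraproduct of the maps $\pi_0\Cayley_{\Pic(\cC_i)}(P_i)$.

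Since this action is the identity, the {\L}o{\'s} principle forces $\pi_0\Cayley_{\Pic(\cC_i)}(P_i)$ to be the identity on $\pi_0((\PicCell_{m,i}\cC_i^{\omega})^{\circ})$ for $\cF$-almost all $i$. The uniform separation hypothesis then gives $P_i \simeq \mathbf{1}_{\cC_i}$ for $\cF$-almost all $i$, so $[P_i]$ represents the unit class in $\Prod{\cF}\Pic(\cC_i)$, establishing injectivity on $\pi_0$. The main obstacle is the middle step, where one must carefully identify the restricted Cayley action of $\cL([P_i])$ on $\Prod{\cF}\PicCell_{m,i}\cC_i^{\omega}$ with the component-wise ultraproduct of Cayley actions; the other parts of the argument are then a formal application of the separation hypothesis once this matching is in place.
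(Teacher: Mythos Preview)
Your argument is correct and is the contrapositive of the paper's proof, but the paper avoids the functor-level identification you flag as the main obstacle. Instead of showing that the restricted Cayley action of $\cL([P_i])$ equals $\Prod{\cF}\Cayley_{\Pic(\cC_i)}(P_i)$ as functors and then invoking {\L}o{\'s}, the paper argues directly: given a nontrivial $P=[P_i]_{\cF}$ with each $P_i$ nontrivial, uniform separation produces witnesses $X_i \in \PicCell_{m,i}\cC_i^{\omega}$ with $P_i \otimes X_i \not\simeq X_i$, and then the single object $[X_i]_{\cF} \in (\Prod{\cF}^{\Pic}\cC_i)^{\omega}$ detects that $\phi_{\Pic}(P)$ is not the identity. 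This only requires knowing how $\phi_{\Pic}(P)$ acts on one object, namely that $\phi_{\Pic}(P)([X_i]_{\cF}) = [P_i \otimes X_i]_{\cF}$, which is immediate from the construction of $\cL$ and the symmetric monoidal structure on the protoproduct.

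Your route works too: the {\L}o{\'s} step is fine because if $\Prod{\cF}f_i = \id$ as a self-map of a set ultraproduct, then choosing witnesses $a_i$ with $f_i(a_i)\neq a_i$ on the complement of $\{i:f_i=\id\}$ shows that complement cannot lie in $\cF$. Unwinding that existential choice is exactly the paper's direct argument, so the two proofs differ only in packaging; the paper's version spares you the careful identification of functors by trading it for a single object-level check.
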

\begin{proof}
Consider the diagonal composite 
\[
\xymatrixcolsep{4pc}
\xymatrix{\pi_0\Prod{\cF}\Pic(\cC_i) \ar[r]^-{\pi_0\cL_{\Pic}} \ar[rrd]_-{\phi_{\Pic}} & \pi_0\Pic\Prod{\cF}^{\Pic}\cC_i \ar[r]^-{\pi_0\Cayley_{\Pic\Prod{\cF}^{\Pic}\cC_i}} & \pi_0\Fun((\Prod{\cF}^{\Pic}\cC_i)^{\omega},(\Prod{\cF}^{\Pic}\cC_i)^{\omega})^{\circ} \ar[d] \\ && \Fun(\pi_0((\Prod{\cF}^{\Pic}\cC_i)^{\omega})^{\circ},\pi_0((\Prod{\cF}^{\Pic}\cC_i)^{\omega})^{\circ}).}
\]
We claim that the monoid homomorphism $\phi_{\Pic}$ is injective. Working in the homotopy category, let $P=[P_i]_{\cF} \in \pi_0\Prod{\cF}\Pic(\cC_i) \cong \Prod{\cF}\pi_0\Pic(\cC_i)$; note that, without loss of generality, we may assume that $P_i \in \pi_0\Pic(\cC_i)$ is non-trivial for all $i \in I$. By assumption, there exists $m$ such that, for any $i \in I$, there exists an object $X_i \in \PicCell_{m,i}\cC_i^{\omega}$ satisfying $\pi_0\Cayley_{\Pic(\cC_i)}(P_i)(X_i) \neq X_i$. Therefore, we may take 
\[
[X_i]_{\cF} \in \Prod{\cF}\PicCell_{m,i}\cC_i \subseteq (\Prod{\cF}^{\Pic}\cC_i)^{\omega}
\]
to detect that $\phi_{\Pic}(P)$ is not the identity functor. 
\end{proof}

We end this discussion of the salient features of the $\Pic$-generated protoproducts by applying the results above to the two key examples in this paper, namely the $\K$-local categories $\cSpnp$ and the completed Franke categories $\cFrnp$. 

\begin{thm}\label{thm:expicgen}
For any ultrafilter $\cF$ on $\bP$, the protoproducts $\Prod{\cF}^{\Pic}\cSpnp$ and $\Prod{\cF}^{\Pic}\cFrnp$ are $\Pic$-generated. Moreover, there are canonical monomorphisms
\[
\xymatrix{\lambda_{\cF}\colon \Prod{\cF}\pi_0\Pic(\cSpnp) \ar[r] & \pi_0\Pic\Prod{\cF}^{\Pic}\cSpnp}
\]
and
\[
\xymatrix{\lambda_{\cF}^{\alg}\colon \Prod{\cF}\pi_0\Pic(\cFrnp) \ar[r] & \pi_0\Pic\Prod{\cF}^{\Pic}\cFrnp.}
\]
\end{thm}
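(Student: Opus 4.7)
Plan. The theorem asserts that both $\Prod{\cF}^{\Pic}\cSpnp$ and $\Prod{\cF}^{\Pic}\cFrnp$ are Pic-generated, and that the canonical maps $\lambda_{\cF}$ and $\lambda_{\cF}^{\alg}$ are injective on $\pi_0$. Both assertions follow by specializing the general framework of \Cref{ssec:modpicproto} to the collections $(\cSpnp)_{p\in\bP}$ and $(\cFrnp)_{p\in\bP}$: Pic-generation will come from \Cref{prop:picproto}, while injectivity will come from \Cref{prop:picdetection} combined with the identification $\Pic(\Prod{\cF}^{\Pic}\Ind\Thick\Pic(\cC_i)) \simeq \Prod{\cF}\Pic(\cC_i)$ from \cite[Lemma 5.15]{ultra1} (after passing to $\pi_0$ and using that $\pi_0$ commutes with ultraproducts of spaces). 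The task therefore reduces to verifying that both collections are unital collections of Pic-generated $\infty$-categories that are uniformly separated in the sense of \Cref{defn:unformsep}.

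For each prime $p$, the $\infty$-category $\cSpnp$ is Pic-generated thanks to \Cref{prop:localdualitycontext}(3) and \Cref{cor:savetheday} applied to the local duality context $(\Spnp, L_{\E}F(n,p))$; similarly, $\cFrnp$ is Pic-generated via the context $(\Frnp, \underline{P}(E_0/I_n))$. To verify the unital condition uniformly in $p$, I would pass through the local duality equivalences \eqref{eq:toplocalduality} and \eqref{eq:alglocalduality}. Under $\M$, the unit $L_{\K}S^0$ corresponds to $\Gamma_F S^0 \in \Mnp$, which is a filtered colimit of duals of generalized Moore spectra $M^I(n,p)$. By the Hopkins--Smith periodicity theorem, one may choose such Moore spectra with exactly $2^n$ cells, so they lie in $\PicCell_{2^n}(\cSpnp^{\omega})$ with a bound independent of $p$. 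On the algebraic side, the same bound $d = 2^n$ is produced by the Koszul-type resolution associated to the regular sequence $(p, u_1,\ldots,u_{n-1})$ generating $I_n$, applied to the unit of $\cFrnp$.

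The main obstacle is the uniform separation: we must exhibit an integer $m$ independent of $p$ such that any non-trivial $P \in \pi_0\Pic(\cSpnp)$ (resp.\ of $\cFrnp$) induces a non-identity map on $\pi_0((\PicCell_m)^{\circ})$. My strategy is to choose $m$ large enough that $\PicCell_m$ contains $L_{\K}F(n,p)$ together with its orbit under $L_{\K}\Pic(\Spnp) \cong \Z$: suspension-type elements of $\pi_0\Pic(\cSpnp)$ are then detected by the parity of their action on $K(n)$-homology, and exotic elements are detected through their induced self-map of the Morava module of $L_{\K}F(n,p)$; uniformity of $m$ in $p$ reflects the fact that the $E_*^{\vee}E$-comodule length of any compact $\K$-local spectrum is controlled by its Pic-cell filtration degree. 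The analogous argument for $\cFrnp$ is strictly parallel and in fact simpler, because the Picard group is accessible algebraically through the Morita equivalence \eqref{eq:frmorita} and the computation of \Cref{cor:frmodpicard}.

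Combining these three verifications, \Cref{prop:picproto} yields the Pic-generation of both protoproducts, while \Cref{prop:picdetection}, combined with \cite[Lemma 5.15]{ultra1}, furnishes the canonical injective maps $\lambda_{\cF}$ and $\lambda_{\cF}^{\alg}$.
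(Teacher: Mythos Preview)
Your overall plan matches the paper's proof: verify that the collections $(\cSpnp)_{p\in\bP}$ and $(\cFrnp)_{p\in\bP}$ are unital and uniformly separated, and then invoke \Cref{prop:picproto} and \Cref{prop:picdetection}. Your verification of Pic-generation and the unital condition is essentially the paper's argument, including the bound $d=2^n$ coming from generalized Moore spectra.

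The gap is in your uniform separation argument. Your proposed detection of exotic Picard elements via ``their induced self-map of the Morava module of $L_{\K}F(n,p)$'' does not work: by definition, an exotic element $P\in\Pic(\cSpnp)$ satisfies $E_*^{\vee}P\cong E_*^{\vee}S^0$ as Morava modules, so $E_*^{\vee}(P\otimes F(n,p))$ is indistinguishable from $E_*^{\vee}F(n,p)$ and no Morava-module invariant will see the difference. What \Cref{defn:unformsep} demands is an object $X\in\PicCell_m(\cSpnp^{\omega})$ with $P\otimes X\not\simeq X$ as \emph{objects}, not merely at the level of homology, and your sketch supplies no such $X$ for exotic $P$. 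The paper closes this gap by citing the proof of \cite[Proposition~14.3]{hoveystrickland}, where Hovey and Strickland show that for every non-trivial $P\in\Pic(\cSpnp)$ there is a generalized Moore spectrum $M^I(n)$ with $P\otimes M^I(n)\not\simeq M^I(n)$; since every such $M^I(n)$ lies in $\PicCell_{2^n}$, this gives uniform separation with $m=2^n$ at once, without any case distinction between algebraic and exotic elements. The algebraic side is handled analogously.
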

\begin{proof}
We verify the claim for $\cSpnp$, the one for $\cFrnp$ being proven similarly. Since the unit $L_{\E}S^0 \in \Spnp$ is compact, it follows from  \Cref{prop:localdualitycontext}(3) and \Cref{cor:savetheday} that $\cSpnp$ is $\Pic$-generated. In light of \Cref{prop:picproto} and \Cref{prop:picdetection}, it remains to show that $(\cSpnp)_{p \in \bP}$ is a unital and uniformly separated collection. To this end, recall the generalized Moore spectra $M^I(n)$ from \Cref{ssec:ex} that may be built using $2^n$ $\Pic$-cells, i.e., $M^I(n) \in \PicCell_{2^n,p}\cSpnp$ for all $p \in \bP$. Applying $\K$-local Spanier Whitehead duality $D$ to \eqref{eq:completionformula} and specializing to $X = S^0$ yields 
\[
L_{\K}S^0 \simeq \colim_I DM_I(n) \in \Ind\PicCell_{2^n,p}\cSpnp,
\]
so the collection $(\cSpnp)_{p \in \bP}$ is unital. 

In the proof of \cite[Proposition 14.3]{hoveystrickland}, Hovey and Strickland show that for any non-trivial $P \in \Pic(\cSpnp)$ there exists $I$ with $P \otimes M^I(n)$ not equivalent to $M^I(n)$. Thus $(\cSpnp)_{p \in \bP}$ is uniformly separated with $m=2^n$. 

The analogous arguments prove that the collection $(\cFrnp)_{p \in \bP}$ is unital and uniformly separated.
\end{proof}

\subsection{Torsion in protoproducts}\label{ssec:prototorsion}

Let $(R_i)_{i \in I}$ be a collection of even periodic $\bE_{\infty}$-ring spectra indexed on a set $I$. Let $(x_{1,i},\ldots,x_{n,i})$ be a regular sequence in $\pi_0 R_i$. For $k_i \in \N$, define the compact $R_i$-module $\kappa_{i}^{(k_i)}$ to be $R_i/(x_{1,i}^{k_i},\ldots,x_{n,i}^{k_i})$ so that $\pi_*\kappa_{i}^{(k_i)}$ is an even periodic $\pi_*R_i$-module with $\pi_0 \kappa_{i}^{(k_i)} \cong (\pi_0R_i)/(x_{1,i}^{k_i},\ldots,x_{n,i}^{k_i})$. For each $i\in I$, let $A_i \in \CAlg_{R_i}$ be a flat $R_i$-module (i.e., $\pi_*A_i$ is a flat $\pi_*R_i$-module) and define $M^{(k_i)}(A_i) = A_i \otimes_{R_i}\kappa_{i}^{(k_i)}$. Note that $M^{(k_i)}(A_i)$ is a compact $A_i$-module built out of $2^n$ $A_i$-cells. Let $J_i = (x_{1,i},\ldots,x_{n,i})$, the category of $J_i$-torsion $A_i$-modules is the localizing subcategory 
\[
\tMod_{A_i} = \Loc(M^{(1)}(A_i)) = \Loc(\{M^{(k_i)}(A_i)| k_i \in \N\})
\]
of $\Mod_{A_i}$ generated by $M^{(1)}(A_i)$. We will write 
\[
\iota_{A_i} \colon \tMod_{A_i} \to \Mod_{A_i}
\] 
for the corresponding inclusion functor and $\Gamma_{A_i}$ for its right adjoint. Since $\Mod_{A_i}$ is compactly generated by its unit $A_i$, it follows that $\cG_{i}^{\tors} = \Gamma_{A_i}A_i$ is an invertible generator for $\tMod_{A_i}$.

\begin{ex} 
The example to keep in mind is $I = \bP$ and, for $p$ a prime, $R_p = \E$, $\kappa_{p}^{(k_p)} \simeq \E/(p^{k_p}, \ldots, u_{n-1}^{k_p})$, and $A_p = \E^{\otimes s}$ for some $s \ge 1$. The category $\tMod_{\E^{\otimes s}}$ then coincides with the category of $I_n$-torsion $\E^{\otimes s}$-modules. Similarly, we may consider the algebraic analogues $R_p = (\E)_{\star}$, $\kappa_{p}^{(1)} = (\E)_{\star}/I_n$, and $A_p = (\E^{\otimes s})_{\star}$.  
\end{ex}
 Suppose $\cF$ is an ultrafilter on $I$, we define:
\begin{defn}\label{defn:ultratorsion}
An object $W \in \Prod{\cF}^{\flat} \Mod_{A_i}$ is said to be  $\pi_{[*]}$-torsion if every element in $\pi_{[*]}W$ is annihilated by $[J_i]_{\cF}^{[d_i]} = [J_i^{d_i}]_{\cF}$ for some $[d_i] \in \N^{\cF}$.
\end{defn}

For example, for every  $[k_i] \in \N^{\cF}$, $M^{[k_i]}:= [M^{(k_i)}(A_i)]_{\cF}$ is a compact $\pi_{[*]}$-torsion object in $\Prod{\cF}^{\flat} \Mod_{A_i}$. The goal of this section is to show that all torsion objects can be built from the set of compact objects $\{M^{[k_i]}|[k_i] \in \N^{\cF}\}$.

\begin{lem}\label{lem:torsioncover}
If $W  \in \Prod{\cF}^{\flat} \Mod_{A_i}$ is $\pi_{[*]}$-torsion, then there exists an object \[M \in \Loc(\{M^{[k_i]} | [k_i] \in \N^{\cF}\}) \subseteq  \Prod{\cF}^{\flat} \Mod_{A_i}\] together with a map $f\colon M \to W$ which is surjective on homotopy groups. 
\end{lem}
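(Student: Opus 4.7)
The plan is to construct $f\colon M \to W$ as a coproduct of lifts of every $\pi_{[\ast]}$-torsion class of $W$ through the compact torsion objects $\Sigma^{[s_i]}M^{[k_i]}$.

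First, I would exploit that the unit $\mathbf{1} = [A_i]_{\cF}$ of $\Prod{\cF}^{\flat}\Mod_{A_i}$ is compact (it sits in the first cell-filtration degree) in order to represent any class $w\in\pi_{[s_i]}W$ by an $\cF$-coherent family of $A_i$-module maps $w_i\colon \Sigma^{s_i}A_i \to W_i$ defined on some $U\in\cF$. The torsion hypothesis supplies $[d_i]\in\N^{\cF}$ with $J_i^{d_i}\cdot w_i = 0$ on $U$ (shrinking $U$ if necessary), and in particular $x_{j,i}^{d_i}\cdot w_i = 0$ for each $j$.

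Second, I would use the iterated-cofiber description of $M^{(d_i)}(A_i) = A_i\otimes_{R_i}\kappa_i^{(d_i)}$ as a Koszul-type complex on the multiplications $x_{j,i}^{d_i}\colon A_i\to A_i$ to lift each $w_i$ to a map $\tilde{w}_i\colon\Sigma^{s_i}M^{(d_i)}(A_i)\to W_i$ extending $w_i$ along the unit map $\Sigma^{s_i}A_i\to \Sigma^{s_i}M^{(d_i)}(A_i)$. Each step of the iteration kills one generator $x_{j,i}^{d_i}$ and is possible because $x_{j,i}^{d_i}$ annihilates $w_i$. Indeterminacy in intermediate lifts can be absorbed by replacing $d_i$ with a universal multiple, which is legal since $\N^{\cF}$ is closed under multiplication by a fixed scalar and since $J_i^{n d_i}\cdot w_i = 0$ also holds. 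The family $(\tilde{w}_i)$ then descends to a morphism $\tilde{w}\colon \Sigma^{[s_i]}M^{[d_i]}\to W$ in $\Prod{\cF}^{\flat}\Mod_{A_i}$ that lifts $w$.

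Finally, indexing over all classes $w\in\pi_{[s]}W$ for all $[s]\in\N^{\cF}$, I would set $M = \bigoplus_{w}\Sigma^{[s_{i,w}]}M^{[d_{i,w}]}$ and $f = \bigoplus_w \tilde{w}\colon M\to W$. Since localizing subcategories are closed under coproducts and shifts, $M$ lies in $\Loc(\{M^{[k_i]}\mid [k_i]\in\N^{\cF}\})$, and by construction $f$ is surjective on $\pi_{[\ast]}$.

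The main obstacle will be the obstruction theory of the second step: performing the Koszul-type lift coherently across the ultrafilter requires controlling the intermediate indeterminacies. The key trick is to allow $d_i$ to be multiplied by a universal constant to rigidify the argument, which is harmless at the level of $\N^{\cF}$; this is the subtlety specific to the protoproduct setting that does not arise for a single $\Mod_{A_i}$.
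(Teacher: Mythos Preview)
Your overall architecture matches the paper's: for each class $w\in\pi_{[*]}W$ produce a map from some $\Sigma^{[s_i]}M^{[k_i]}$ that hits $w$, then take the coproduct. The paper's proof is essentially a one-liner asserting exactly this. So the strategy is right; the issue is in your Step~2.

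The obstruction analysis you flag is real, but your proposed fix---replacing $d_i$ by a universal multiple---does not work. After lifting $w$ to $w^{(1)}\in\pi_{[s]}\Hom([A_i/x_{1,i}^{d_i}],W)$, the obstruction to the next lift is $x_2^{[d]}w^{(1)}$, which lies in the kernel $\pi_{[s+1]}W/x_1^{[d]}$ of the restriction map. This class is the image of some element $v\in\pi_{[s+1]}W$, and its $x_2$-torsion order has nothing to do with $d_i$ or with $w$: it is governed by the torsion order of $v$. No multiple of $d_i$ kills it in general. The same phenomenon recurs at each of the $n$ stages, each time dragging in an element of $\pi_{[s+j]}W$ whose torsion exponent is a priori unrelated to that of $w$.

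What rescues the argument is precisely the hypothesis you have not yet used beyond the single class $w$: \emph{all} of $\pi_{[*]}W$ is torsion. Thus $v$ above is annihilated by some $x_2^{[e]}$ with $[e]\in\N^{\cF}$, whence $x_2^{[d+e]}w^{(1)}=0$ and the lift to $[A_i/(x_{1,i}^{d_i},x_{2,i}^{d_i+e_i})]$ exists. Iterating $n$ times and then projecting from $M^{[k]}$ with $k_i=\max_j k_{j,i}$ gives the desired $f_w$. Note also that this entire lift should be carried out \emph{inside} the protoproduct (where $[A_i]$ is a compact unit and $M^{[k_i]}$ are compact objects), not coordinate\-wise; once you do so there is no separate ``coherence across the ultrafilter'' to manage, and the subtlety you name at the end is already present for a single module category.
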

\begin{proof}
For any $w= [w_i] \in \pi_{[*]}W$, there exists  $[k_i]_w\in \N^{\cF}$ and a map $f_{w}\colon M^{[k_i]_w} \to W$ with $w \in \im(\pi_{[*]}f_{w})$. Explicitly, we may choose $(k_i)_{i\in I} \in \N^I$ so that
\[
\{i\colon J_i^{k_i}w_i = 0\} \in \cF.\]
Consequently, the map
\[
\xymatrix{f=\bigoplus f_w\colon M= \bigoplus \limits	_{w \in \pi_{[*]}W} M^{[k_i]_w} \ar[r] & W} 
\]
gives what we want. 
\end{proof}

\begin{lem}\label{lem:torsionresolution}
Suppose $W \in \Prod{\cF}^{\flat}\Mod_{A_i}$ and we are given a map $f\colon M \to W$ with $M \in \Loc(\{M^{[k_i]} | [k_i] \in \N^{\cF}\})$ and $\pi_{[*]}f$ surjective, then there exists $N \in \Loc(\{M^{[k_i]} | [k_i] \in \N^{\cF}\})$ and a factorization
\[
\xymatrix{M \ar[r]^-{j} \ar[d]_{f} & N \ar[ld]^{g}\\
W}
\]
such that $\pi_{[*]}g$ is surjective and $\ker(\pi_{[*]}f) \subseteq \ker(\pi_{[*]}j)$.
\end{lem}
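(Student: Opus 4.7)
The plan is to construct $N$ as a cofiber of a map into $M$ whose source, provided by \Cref{lem:torsioncover}, detects $\ker(\pi_{[*]}f)$. Specifically, I would set $F = \fib(f)$, apply \Cref{lem:torsioncover} to $F$ to obtain $M' \in \Loc(\{M^{[k_i]} \mid [k_i] \in \N^{\cF}\})$ with a map $p\colon M' \to F$ that is surjective on $\pi_{[*]}$, and then define
\[
N = \cof\bigl(M' \xrightarrow{p} F \to M\bigr),
\]
with $j\colon M \to N$ the canonical map. The key preliminary task is to verify that $\pi_{[*]}F$ is $\pi_{[*]}$-torsion so that \Cref{lem:torsioncover} applies.

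For the torsion claim, I would first observe that every object of $\Loc(\{M^{[k_i]}\})$ has $\pi_{[*]}$-torsion homotopy: the generators $M^{[k_i]}$ are annihilated by $[J_i]_{\cF}^{[k_i]}$, and the class of objects whose homotopy is $\pi_{[*]}$-torsion is closed under shifts, coproducts, cofibers, filtered colimits, and retracts. Hence $\pi_{[*]}M$ is torsion. The hypothesis that $\pi_{[*]}f$ is surjective, combined with the long exact sequence of the fiber sequence $F \to M \xrightarrow{f} W$, forces the boundary map $\pi_{[*]+1}W \to \pi_{[*]}F$ to vanish, so $\pi_{[*]}F \hookrightarrow \pi_{[*]}M$ identifies $\pi_{[*]}F$ with $\ker(\pi_{[*]}f)$. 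As a submodule of a torsion module, $\pi_{[*]}F$ is torsion, so \Cref{lem:torsioncover} supplies the desired $M'$ and $p$.

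With $N$ defined as above, the three remaining conditions follow formally. The composite $M' \to F \to M \xrightarrow{f} W$ is null because $F \to M \xrightarrow{f} W$ is the defining composite of a fiber sequence; hence $f$ factors uniquely through $j$, producing $g\colon N \to W$ with $gj = f$. Surjectivity of $\pi_{[*]}g$ is immediate from $\pi_{[*]}f = \pi_{[*]}g \circ \pi_{[*]}j$ together with surjectivity of $\pi_{[*]}f$. Finally, the cofiber sequence $M' \to M \to N$ yields
\[
\ker(\pi_{[*]}j) \;=\; \im(\pi_{[*]}M' \to \pi_{[*]}M) \;=\; \im(\pi_{[*]}F \to \pi_{[*]}M) \;=\; \ker(\pi_{[*]}f),
\]
where the middle equality uses surjectivity of $\pi_{[*]}p$ and the last uses the identification of $\pi_{[*]}F$ with $\ker(\pi_{[*]}f)$ established above.

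The only step with any content beyond diagram chasing is verifying that $\pi_{[*]}F$ is torsion in the precise sense of \Cref{defn:ultratorsion}, but this reduces to the inductive closure properties of torsion under the operations generating $\Loc$. I do not expect any further obstacle; the rest is a routine manipulation of fiber and cofiber sequences combined with the preceding lemma.
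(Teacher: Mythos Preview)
Your proof is correct and follows essentially the same route as the paper: take the fiber $F$ of $f$, use \Cref{lem:torsioncover} to cover $F$ by an object of $\Loc(\{M^{[k_i]}\})$, and define $N$ as the resulting cofiber. One minor slip: the factorization of $f$ through the cofiber map $j$ need not be \emph{unique} in a stable $\infty$-category, but existence is all that is required and all that the paper uses.
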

\begin{proof}
Let $F$ be the fiber of $f\colon M \to W$, so that $\pi_{[*]}F \cong \ker(\pi_{[*]}f)$. Since $M$ is $\pi_{[*]}$- torsion, so is $\pi_{[*]}F$ and we can apply \Cref{lem:torsioncover} to obtain a map $h\colon N' \to F$ with $N' \in \Prod{\cF}^{\flat}\tMod_{A_i}$ and $\pi_{[*]}h$ surjective. Consider the following commutative diagram of cofiber sequences:
\[
\xymatrix{& F \ar[d] & \\
N' \ar[r] \ar[ru]^h & M \ar[r]^j \ar[d]_f & N \ar@{-->}[ld]^g \\
& W.}
\]
The dashed map $g$ exists because the composite $N' \to M \to W$ factors through $F$ and is thus null. Since $\pi_{[*]}f$ is surjective, so is $\pi_{[*]}g$. To verify that $\ker(\pi_{[*]}f) \subseteq \ker(\pi_{[*]}j)$, observe that, by construction, any $m \in \pi_{[*]}M$ with $\pi_{[*]}f(m)=0$ lifts to an element $m' \in \pi_{[*]}N'$, hence $\pi_{[*]}j(m) = 0$. 
\end{proof}

For each $i \in I$, recall that $\cG_{i}^{\tors} = \{ \Sigma^k\Gamma_{A_i}A_i|k \in \Z\}$ is a set of Pic-generators for $\tMod_{A_i}$. The resulting $\cG_{i}^{\tors}$-cell filtration on $\tMod_{A_i}$ will be denoted by $\cG_{i}^{\tors}\Cell_*((\tMod_{A_i})^{\omega})$, see \Cref{ex:cellproto}.

\begin{defn}
With notation as above, suppose $\cF$ is an ultrafilter on $I$, then we define the protoproduct of the torsion categories $\tMod_{A_i}$ as
\[
\Prod{\cF}^{\flat} \tMod_{A_i} = \Prod{\cF}^{\Pic}(\tMod_{A_i},\cG_{i}^{\tors}) = \Prod{\cF}^{\flat} (\tMod_{A_i}, \cG_{i}^{\tors}\Cell_*((\tMod_{A_i})^{\omega})).
\]
\end{defn}

Since the unit $\Gamma_{A_i}A_i$ of $\tMod_{A_i}$ is contained in $\Ind\cG_{i}^{\tors}\Cell_{2^n}((\tMod_{A_i})^{\omega})$, the protoproduct $\Prod{\cF}^{\flat} \tMod_{A_i}$ comes equipped with a natural symmetric monoidal structure by \Cref{cor:picprotomonoidal}.

\begin{lem}\label{lem:torsioniota}
There is a fully faithful symmetric oidal functor
\[
\xymatrix{\iota_{\cF}=(\iota_{A_i})_{\cF} \colon \Prod{\cF}^{\flat} \tMod_{A_i} \ar[r] & \Prod{\cF}^{\flat} \Mod_{A_i}.}
\]
\end{lem}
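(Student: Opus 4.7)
The plan is to extend the inclusions $\iota_{A_i}\colon \tMod_{A_i} \to \Mod_{A_i}$ pointwise to the protoproducts, adapting the strategy of \cref{prop:superfunctors}. First observe that each $\iota_{A_i}$ is fully faithful, colimit preserving, and nonunital symmetric monoidal by \cref{prop:localdualitycontext}(6); moreover, it preserves compact objects since $(\tMod_{A_i})^{\omega} = \Thick_{\tMod_{A_i}}(M^{(1)}(A_i))$ and $M^{(1)}(A_i)$ is already compact in $\Mod_{A_i}$.

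The central technical step is to establish a uniform cell bound: a non-decreasing function $\beta\colon \N \to \N$ such that, for every $i \in I$ and every $d \geq 0$,
\[
\iota_{A_i}\bigl(\cG_i^{\tors}\Cell_d((\tMod_{A_i})^{\omega})\bigr) \;\subseteq\; \cG_i^{\cell}\Cell_{\beta(d)}((\Mod_{A_i})^{\omega}).
\]
The intuition is that although the ``cell'' $\Gamma_{A_i}A_i$ is itself noncompact in $\Mod_{A_i}$, any compact object assembled from $d$ such cells must lie in $\Thick_{\Mod_{A_i}}(M^{(1)}(A_i))$; since $M^{(1)}(A_i) = A_i/(x_{1,i},\ldots,x_{n,i})$ has precisely $2^n$ $A_i$-cells uniformly in $i$ by the Koszul structure on the regular sequence, an induction on $d$ rewriting each cofiber construction in terms of the Koszul pieces $M^{(k)}(A_i)$ yields a polynomial bound $\beta(d)$ independent of $i$.

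With $\beta$ in hand, the construction of $\iota_{\cF}$ proceeds as in the proof of \cref{prop:superfunctors}: for each $d$, apply \cref{lem:funfactory} to the collection $(\iota_{A_i}|_{\cG_i^{\tors}\Cell_d})_{i \in I}$, viewed as functors into $\Ind\cG_i^{\cell}\Cell_{\beta(d)}$, then take the colimit over $d$ and $\Ind$-extend; using that $\beta$ is cofinal in $\N$ and that $\Ind$ commutes with filtered colimits, this produces the desired colimit preserving functor $\iota_{\cF}\colon \Prod{\cF}^{\flat}\tMod_{A_i} \to \Prod{\cF}^{\flat}\Mod_{A_i}$. Its symmetric oidal structure is verified by a commutative diagram of fully faithful functors comparing with the evidently symmetric oidal functor $\Prod{\cF}^{\omega}\iota_{A_i}$, exactly as in \cref{prop:superfunctors}. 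For full faithfulness, each $\iota_{A_i}|_{\cG_i^{\tors}\Cell_d}$ is fully faithful, \cite[Corollary 3.15]{ultra1} ensures ultraproducts preserve this property, and both the filtered colimit over $d$ and the $\Ind$-extension preserve fully faithful functors.

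The main obstacle is the uniform cell bound. The difficulty is precisely that $\Gamma_{A_i}A_i$ is noncompact in $\Mod_{A_i}$, so a naive additive cofiber-count produces an infinite bound on $A_i$-cells; the resolution is to exploit the compactness constraint in the source filtration, which forces the cofibers to ``cancel'' down, together with the fact that the Koszul pieces $M^{(k)}(A_i)$ all share the same uniform $A_i$-cell count $2^n$.
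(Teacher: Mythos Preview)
Your overall plan is sound and matches the paper's: establish a uniform bound $\beta$ so that $\iota_{A_i}$ carries $\cG_i^{\tors}\Cell_d((\tMod_{A_i})^{\omega})$ into $\Cell_{\beta(d)}(\Mod_{A_i}^{\omega})$, then invoke the protoproduct machinery. Full faithfulness and the symmetric oidal structure then follow as you indicate.

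The gap is in the cell bound itself. Your proposed induction on $d$ does not go through: an object $X \in \cG_i^{\tors}\Cell_d((\tMod_{A_i})^{\omega})$ is by definition compact \emph{and} a retract of something built in $d$ steps from shifts of $\Gamma_{A_i}A_i$, but the intermediate stages $X_0,\ldots,X_{d-1}$ of that construction need not be compact. Hence you cannot apply the inductive hypothesis (which only bounds compact objects) to $X_{d-1}$, and the induction stalls at the very first step. Your remark that compactness ``forces the cofibers to cancel down'' gestures at the right phenomenon but does not supply a mechanism.

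The paper avoids induction entirely with a single trick. Since $X$ is torsion and $\Gamma_{A_i}A_i$ is the unit of $\tMod_{A_i}$, one has $X \simeq X \otimes \Gamma_{A_i}A_i \simeq \colim_{k} X \otimes M^{(k)}(A_i)$ in $\Mod_{A_i}$. Because $X$ is also compact in $\Mod_{A_i}$, the identity on $X$ factors through some finite stage, exhibiting $X$ as a retract of $X \otimes M^{(k)}(A_i)$ for a single $k$ (depending on $X$, but that is harmless). Now bound the latter directly: $X$ is built from $d$ cells of the form $\Sigma^j\Gamma_{A_i}A_i$, so $X \otimes M^{(k)}(A_i)$ is built from $d$ copies of $\Gamma_{A_i}A_i \otimes M^{(k)}(A_i) \simeq M^{(k)}(A_i)$, each of which has $2^n$ $A_i$-cells. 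Hence $X \in \Cell_{2^n d}(\Mod_{A_i}^{\omega})$, and $\beta(d) = 2^n d$ works uniformly in $i$. With this linear bound in hand, the remainder of your argument (or the paper's direct appeal to \cref{cor:symmonfun}) goes through unchanged.
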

\begin{proof}
Let $i \in I$. By the construction of $\Gamma_{A_i}$, there exists a cofinal sequence of elements $(k_i) \in \N^n$ and a natural equivalence 
\[
\colim_{(k_i)}M^{(k_i)}(A_i) \simeq \Gamma_{A_i}A_i = \cG_i^{\tors},
\]
so that $\cG_i^{\tors} \in \Ind\Cell_{2^n}\Mod_{A_i}^{\omega}$. If $X \in \cG_{i}^{\tors}\Cell_k((\tMod_{A_i})^{\omega})$ for some $k \in \N$, then $X \otimes \cG_i^{\tors} \simeq X$, so the compactness of $X \in (\tMod_{A_i})^{\omega}$ provides the existence of a sequence $(k_i)$ such that $X$ is a retract of $X \otimes M^{(k_i)}(A_i)$. This implies that $X \in \Cell_{2^nk}\Mod_{A_i}$ and hence
\[
\cG_{i}^{\tors}\Cell_k((\tMod_{A_i})^{\omega}) \subseteq \Cell_{2^nk}\Mod_{A_i}.
\]
By passing to the associated protoproducts, we thus obtain from \Cref{cor:symmonfun} a fully faithful symmetric oidal functor $\iota_{\cF}$, as claimed.
\end{proof}

Note that $\iota_{\cF}$ preserves colimits and compact objects and that, as mentioned before, $M^{[k_i]}$ is a compact object in $\Prod{\cF}^{\flat} \tMod_{A_i}$ for all $[k_i] \in \N^{\cF}$. We are now ready to state the main result of this section.

\begin{prop}\label{prop:protopitorsion}
Let $W \in \Prod{\cF}^{\flat} \Mod_{A_i}$. The following are equivalent:
\begin{enumerate}
\item $W$ is in $\Loc(\{M^{[k_i]} | [k_i] \in \N^{\cF}\})$.
\item $W$ is in the essential  image of $\iota_{\cF}$.
\item $W$ is $\pi_{[*]}$-torsion.
\end{enumerate} 
\end{prop}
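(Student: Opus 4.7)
The plan is to prove the cycle $(1) \Rightarrow (2) \Rightarrow (3) \Rightarrow (1)$, with the key topological work concentrated in the last implication. For $(1) \Rightarrow (2)$, note that $\iota_{\cF}$ is fully faithful, exact, and colimit preserving (the remarks after \Cref{lem:torsioniota}), so its essential image is a localizing subcategory of $\Prod{\cF}^{\flat}\Mod_{A_i}$. Since each $M^{[k_i]}$ already lies in this essential image (it is $\iota_{\cF}$ applied to the same ultraproduct viewed inside $\Prod{\cF}^{\flat}\tMod_{A_i}$), the localizing subcategory generated by the $M^{[k_i]}$'s is contained in this essential image.

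For $(2) \Rightarrow (3)$, observe that the essential image of $\iota_{\cF}$ is generated as a localizing subcategory by compact objects of the form $[X_i]_{\cF}$ with $X_i$ a compact torsion $A_i$-module. For each such $X_i$, the homotopy $\pi_*X_i$ is annihilated by $J_i^{d_i}$ for some $d_i$ (since $\Thick(M^{(1)}(A_i))$ consists of modules with bounded $J_i$-torsion homotopy), so the ultraproduct $[X_i]_{\cF}$ is annihilated by $[J_i]_{\cF}^{[d_i]}$ and is therefore $\pi_{[*]}$-torsion. The class of $\pi_{[*]}$-torsion objects is closed under fibers/cofibers and under filtered colimits: the latter uses that $[A_i]_{\cF}[\ast]$ is a compact generator for every $[\ast]\in \Z^{\cF}$, so $\pi_{[\ast]}$ commutes with filtered colimits. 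Hence it is closed under $\Loc$, which yields the implication.

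The main content is $(3) \Rightarrow (1)$, where we mimic an Adams-tower resolution. Given a $\pi_{[*]}$-torsion $W$, first apply \Cref{lem:torsioncover} to obtain $f_0\colon M_0 \to W$ with $M_0 \in \Loc(\{M^{[k_i]}\})$ and $\pi_{[*]}f_0$ surjective. Iterate \Cref{lem:torsionresolution} to build a sequence
\[
M_0 \xrightarrow{j_0} M_1 \xrightarrow{j_1} M_2 \to \cdots \to W
\]
with each $M_k \in \Loc(\{M^{[k_i]}\})$, each $g_k\colon M_k \to W$ surjective on $\pi_{[*]}$, and $\ker(\pi_{[*]}g_k) \subseteq \ker(\pi_{[*]}j_k)$. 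Set $M_\infty = \colim_k M_k$, which lies in $\Loc(\{M^{[k_i]}\})$, and let $g\colon M_\infty \to W$ be the induced map. Since $\pi_{[*]}$ commutes with the sequential colimit (compactness of shifts of $[A_i]_{\cF}$), $\pi_{[*]}g$ is a surjection, and the nested-kernel condition shows that any class $x \in \pi_{[*]}M_\infty$ mapping to zero in $\pi_{[*]}W$ is represented by an element of $\pi_{[*]}M_k$ which is killed in $\pi_{[*]}M_{k+1}$, hence $x=0$. Thus $\pi_{[*]}g$ is an isomorphism.

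The final step is to upgrade $\pi_{[*]}$-equivalence to equivalence: the cofiber of $g$ has vanishing $\pi_{[\ast]}$, and since $\Prod{\cF}^{\flat}\Mod_{A_i}$ is generated by the compact objects $[A_i]_{\cF}[\ast]$ for $[\ast]\in \Z^{\cF}$ (every compact $[X_i]_{\cF}$ is built from such shifts by a bounded number of cofiber sequences), vanishing of $\pi_{[\ast]}$ detects the zero object. The main obstacle is therefore this last conservativity argument: one must ensure that generalized shifts by $\Z^{\cF}$, rather than $\Z$, are what appear in $\pi_{[*]}$, matching exactly the compact generators produced by the protoproduct construction. Once this bookkeeping is in place, the iterative resolution collapses to $W\simeq M_\infty \in \Loc(\{M^{[k_i]}\})$, closing the cycle.
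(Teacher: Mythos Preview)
Your argument is correct and follows essentially the same route as the paper: the cycle $(1)\Rightarrow(2)\Rightarrow(3)\Rightarrow(1)$, with the substantive step $(3)\Rightarrow(1)$ carried out by iterating \Cref{lem:torsioncover} and \Cref{lem:torsionresolution} to build an Adams-type tower $M_0\to M_1\to\cdots\to W$ and then showing $\colim_k M_k\simeq W$. The only packaging difference is in the endgame: the paper phrases the last step in terms of the fibers, observing that the transition maps $\fib(f_k)\to\fib(f_{k+1})$ are null on $\pi_{[*]}$ and then invoking \cite[Proposition~3.59]{ultra1} to conclude $\colim_k\fib(f_k)\simeq 0$, whereas you argue directly that $\pi_{[*]}(\colim_k M_k)\to\pi_{[*]}W$ is an isomorphism and then use that the generalized shifts $[\Sigma^{n_i}A_i]_{\cF}$ are compact generators to deduce conservativity of $\pi_{[*]}$---this is exactly the content packaged into the cited proposition, so the two arguments coincide.
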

\begin{proof}
For every $[k_i] \in \N^{\cF}$ the object $M^{[k_i]} $ is in  the essential  image of $\iota_{\cF}$  so (1) implies (2).  It is clear that (2) implies (3). We will show that (3) implies (1).
 Assume $W \in \Prod{\cF}^{\flat} \Mod_{A_i}$ is $\pi_{[*]}$-torsion. We will construct an Adams type resolution 
\[
\xymatrix{M_0 \ar[r]^-{j_0} \ar[d]_{f_0} & M_1 \ar[r]^-{j_1} \ar[dl]_-{f_1} & M_2 \ar[r]^-{j_2} \ar[dll]^-{f_2} & \ldots \ar@{}[dlll]^-{\ldots} \\
W}
\]
to show that  $W$ is in $\Loc(\{M^{[k_i]} | [k_i] \in \N^{\cF}\})$. Indeed, using \Cref{lem:torsioncover} and \Cref{lem:torsionresolution} iteratively implies that there is such a resolution satisfying the following: For all $k\ge 0$
	\begin{enumerate}
		\item $M_k \in \Loc(\{M^{[k_i]} | [k_i] \in \N^{\cF}\})$,
		\item $\pi_{[*]}f_k$ is surjective, and
		\item $\ker(\pi_{[*]}f_k) \subseteq \ker(\pi_{[*]}j_k)$.
	\end{enumerate}
Thus the structure maps in the induced diagram
\[
\xymatrix{\fib(f_0) \ar[r] & \fib(f_1) \ar[r] & \fib(f_2) \ar[r] & \ldots} 
\]
are trivial in homotopy, hence \cite[Proposition 3.59]{ultra1} implies that $\colim \fib(f_k) \simeq 0$ and so we see that $\colim M_k \simeq W$ as desired. 
\end{proof}

\begin{defn}
With notation as above, define $(\Prod{\cF}^{\flat}\Mod_{A_i})^{\tors}$ as the localizing subcategory in $\Prod{\cF}^{\flat}\Mod_{A_i}$ generated by  $\{ [\Sigma^{n_{i}}M^{(k_i)}(A_i)]_{\cF}| [n_{i}]\in \Z^{\cF}, [k_i] \in \N^{\cF}\}$. 
\end{defn}

\begin{cor}\label{cor:prototorsion}
For any ultrafilter $\cF$ on $I$, there is a canonical symmetric monoidal equivalence
\[
\xymatrix{\Prod{\cF}^{\flat}(\tMod_{A_i}) \ar[r]^-{\sim} & (\Prod{\cF}^{\flat}\Mod_{A_i})^{\tors}.}
\]
\end{cor}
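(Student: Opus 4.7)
The plan is to build the equivalence directly from \cref{lem:torsioniota} and \cref{prop:protopitorsion}. By \cref{lem:torsioniota}, the functor
\[
\iota_{\cF}\colon \Prod{\cF}^{\flat} \tMod_{A_i} \longrightarrow \Prod{\cF}^{\flat} \Mod_{A_i}
\]
is fully faithful, symmetric oidal, and preserves colimits. So the first step is to pin down its essential image. Since $\iota_{\cF}$ is colimit-preserving and sends each generator $\cG_i^{\tors}$ of $\tMod_{A_i}$ into the category of $\pi_{[*]}$-torsion objects, \cref{prop:protopitorsion} identifies the essential image of $\iota_{\cF}$ with $\Loc(\{M^{[k_i]}\mid [k_i]\in\N^{\cF}\})$.

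The second step is to check that this localizing subcategory coincides with $(\Prod{\cF}^{\flat}\Mod_{A_i})^{\tors}$ as defined above. By definition, the latter is generated by $\{[\Sigma^{n_i}M^{(k_i)}(A_i)]_{\cF}\mid [n_i]\in\Z^{\cF},\,[k_i]\in\N^{\cF}\}$. Every such object lies in $\Loc(\{M^{[k_i]}\})$ by \cref{prop:protopitorsion} (it is manifestly $\pi_{[*]}$-torsion), and conversely each $M^{[k_i]}=[M^{(k_i)}(A_i)]_{\cF}$ is among the generators of $(\Prod{\cF}^{\flat}\Mod_{A_i})^{\tors}$ with $[n_i]=0$. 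Thus the two localizing subcategories agree, so $\iota_{\cF}$ restricts to an equivalence of underlying $\infty$-categories onto $(\Prod{\cF}^{\flat}\Mod_{A_i})^{\tors}$.

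The remaining task is to promote this equivalence to a symmetric monoidal one. By \cref{cor:picprotomonoidal} the source $\Prod{\cF}^{\flat}\tMod_{A_i}$ is symmetric monoidal, with quasi-unit obtained from the (ind-)units $\cG_i^{\tors}=\Gamma_{A_i}A_i$. The functor $\iota_{\cF}$ carries this quasi-unit to an object of $(\Prod{\cF}^{\flat}\Mod_{A_i})^{\tors}$ which, by full faithfulness, is still a quasi-unit for the symmetric oidal structure inherited on the essential image. Applying \cref{lem:luriequasiunit} to this quasi-unit equips $(\Prod{\cF}^{\flat}\Mod_{A_i})^{\tors}$ with a (unique) symmetric monoidal structure, and simultaneously upgrades $\iota_{\cF}$ to a symmetric monoidal functor — which is then a symmetric monoidal equivalence.

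The one subtlety I expect to handle carefully is the compatibility of quasi-units: one must verify that the quasi-unit of $\Prod{\cF}^{\flat}\tMod_{A_i}$, explicitly constructed in the proof of \cref{prop:picprotomonoidal} as an ultrafilter colimit of the cones $f_i\colon J_i\to \cG_i^{\tors}\Cell_{2^n}(\tMod_{A_i}^{\omega})$ selecting the unit, is sent by $\iota_{\cF}$ to an object that genuinely acts as an identity on the essential image. This is a direct consequence of the construction of $\iota_{\cF}$ in \cref{lem:torsioniota} as the protoproduct of the inclusions $\iota_{A_i}$, each of which is lax monoidal with unit map an equivalence after restricting to torsion modules, so the argument is essentially formal once the set-up is in place.
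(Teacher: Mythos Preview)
Your proposal is correct and follows essentially the same route as the paper: identify the essential image of $\iota_{\cF}$ via \cref{prop:protopitorsion} and then upgrade the resulting symmetric oidal equivalence to a symmetric monoidal one. The only difference is cosmetic: where you transport the quasi-unit along $\iota_{\cF}$ and verify it remains a quasi-unit on the image, the paper simply observes that both sides are independently unital (the source by \cref{cor:picprotomonoidal}, the target as a localizing $\otimes$-ideal generated by compact objects in the symmetric monoidal category $\Prod{\cF}^{\flat}\Mod_{A_i}$), whence the oidal equivalence is automatically unital. This makes your final ``subtlety'' paragraph unnecessary---a symmetric oidal equivalence between categories each admitting a quasi-unit must match them up, since a quasi-unit is characterized by the property $u\otimes-\simeq\id$, which is preserved and reflected by any oidal equivalence.
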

\begin{proof}
\Cref{prop:protopitorsion} identifies the essential image of $\iota_{\cF}$ with $(\Prod{\cF}^{\flat}\Mod_{A_i})^{\tors}$, so it remains to verify that the induced symmetric oidal equivalence is unital. This is a formal consequence of the fact that, in light of \Cref{cor:picprotomonoidal}, both $\infty$-categories are unital.
\end{proof}

\section{The proof of the main theorem}

In this section, we will put the pieces together to prove \Cref{thm:knmain}. The missing ingredients at this point are the relationship between descent and the Pic-generated protoproducts of \cref{ssec:four} as well as the compatibility of the main equivalence of \cite{ultra1} with passing to torsion subcategories.

Throughout this section, let $I$ be a set and let $(\cC_i, F_i)_{i\in I}$ be a collection of local duality contexts such that $\cC_i$ is Pic-generated for each $i \in I$. Let $(\ccC_i,\cotimes)_{i\in I}$ be the collection of presentably symmetric monoidal stable $\infty$-categories of $F_i$-complete objects in $\cC_i$ as in \cref{Section3.1}. Recall from \cref{prop:localdualitycontext} and \cref{cor:savetheday}, that $\ccC_i$ is compactly generated and Pic-generated for each $i \in I$. We will assume that the collection $(\cC_i)_{i \in I}$ is unital in the sense of \Cref{def:uniformunital}. We remind the reader that the unit object in $\ccC_i$ is not assumed to be compact. 

Given $A_i \in \CAlg(\cC_i)$, we write $\cMod_{A_i}$ for the $\infty$-category of $L_{F_i}A_i$-modules in $\ccC_i$.

\subsection{Reduction to the totalization: Descent}

We refer to \cite[Section 5.1]{ultra1} for some background material on descent and, in particular, the terminology we are going to use throughout this subsection. 

\begin{defn}
A collection of commutative algebras $(A_i)_{i\in I}$ is said to be uniformly descendable in $(\ccC_i)_{i \in I}$ if there exists an integer $r>0$ such that for all $i\in I$, $A_i$ is descendable of fast-degree less than or equal to $r$. If additionally the $A_i$-based Adams spectral sequence for $\End(1_{\ccC_i})$ collapses at the $E_2$-page for almost all $i \in I$, then we call $(A_i)_{i\in I}$ strongly uniformly descendable. 
\end{defn}

The next result generalizes \cite[Theorem 5.1]{ultra1} to the modified Pic-generated protoproduct of a collection $(\ccC_i,\cotimes)_{i\in I}$ in which the corresponding units are not necessarily compact. As explained in \cite[Section 4.2]{ultra1}, the Amitsur complex associated to $A_i$ provides a diagram of cosimplicial symmetric monoidal $\infty$-categories $\cMod_{A_i^{\otimes \bullet +1}}$. 

\begin{prop}
Let $\cF$ be an ultrafilter on $I$ and suppose that $(A_i)_{i\in I}$ is strongly uniformly descendable, then there is a canonical symmetric monoidal equivalence
\[
\xymatrix{\Prod{\cF}^{\Pic}\ccC_i \ar[r]^-{\simeq} & \Loc\Pic\Tot\Prod{\cF}^{\Pic}\cMod_{A_i^{\otimes \bullet +1}}.}
\]
\end{prop}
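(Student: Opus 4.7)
The strategy is to reduce to a per-index statement via descent, then transport it through the Pic-generated protoproduct using uniform descendability to commute the protoproduct with the totalization.

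\textbf{Step 1 (Per-index descent).} Fix $i \in I$. Strong uniform descendability (in particular, descendability) of $A_i$ as an algebra in $\ccC_i$ together with Lurie's descent theorem provides a symmetric monoidal equivalence
\[
\ccC_i \simeq \Tot \cMod_{A_i^{\otimes \bullet + 1}}.
\]
By \Cref{prop:localdualitycontext}(3), $\ccC_i = \Loc\Pic(\ccC_i)$, so this can be rewritten as $\ccC_i \simeq \Loc\Pic\Tot \cMod_{A_i^{\otimes \bullet + 1}}$. Before applying any protoproducts, one verifies that the collection $(\cMod_{A_i^{\otimes s}})_{i \in I}$ is a unital collection of Pic-generated $\infty$-categories for each $s \ge 1$: Pic-generation follows from \Cref{cor:savetheday} applied to the local duality context on $\cMod_{A_i^{\otimes s}}$ inherited from $\ccC_i$, while unitality follows from the existence of a prime-independent cofinal tower of compact objects approximating $L_{F_i}(A_i^{\otimes s})$ as in the proof of \Cref{thm:expicgen}.

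\textbf{Step 2 (Commuting protoproduct with totalization).} This is the main technical step. Applying the Pic-generated protoproduct to the cosimplicial diagram $\cMod_{A_i^{\otimes \bullet + 1}}$ produces $\Prod{\cF}^{\Pic}\cMod_{A_i^{\otimes \bullet + 1}}$, and one needs the canonical comparison map
\[
\Prod{\cF}^{\Pic}\ccC_i \simeq \Prod{\cF}^{\Pic}\Tot\cMod_{A_i^{\otimes \bullet + 1}} \longrightarrow \Tot \Prod{\cF}^{\Pic}\cMod_{A_i^{\otimes \bullet + 1}}
\]
to behave well. Under uniform descendability of fast-degree $\le r$, the totalization $\Tot$ is computed (up to a retract absorbed by $\Ind$) by a finite partial totalization of length depending only on $r$, as in the proof of \cite[Theorem 5.1]{ultra1}. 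Since finite limits commute with the ultraproduct $\Prod{\cF}$ and with the $\Ind$-completion used to define $\Prod{\cF}^{\Pic}$, the $E_2$-collapse hypothesis together with the uniform fast-degree bound allow one to identify the two sides after applying $\Loc\Pic$. Concretely, the comparison map is fully faithful on compacts by the uniform convergence of the Amitsur filtration, and essentially surjective onto the Picard-generated part by the collapse assumption.

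\textbf{Step 3 (Localize at Pic).} By \Cref{prop:picproto} and the unitality established in Step 1, the collection $(\ccC_i)_{i \in I}$ is a unital collection of Pic-generated $\infty$-categories, so $\Prod{\cF}^{\Pic}\ccC_i \simeq \Loc\Pic\Prod{\cF}^{\Pic}\ccC_i$. Applying $\Loc\Pic$ to both sides of the comparison of Step 2 and using that $\Loc\Pic$ is idempotent yields
\[
\Prod{\cF}^{\Pic}\ccC_i \simeq \Loc\Pic\Tot\Prod{\cF}^{\Pic}\cMod_{A_i^{\otimes \bullet + 1}},
\]
which is the desired equivalence. Symmetric monoidality is preserved throughout by \Cref{cor:picprotomonoidal} together with the fact that descent, Tot, and $\Loc\Pic$ are naturally symmetric monoidal operations.

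\textbf{Main obstacle.} The hardest part is Step 2. In \cite[Theorem 5.1]{ultra1} the corresponding argument relies heavily on the unit of $\cC_i$ being compact, so that the cell filtration behaves well under the Amitsur complex. Here the unit of $\ccC_i$ is generally non-compact (\Cref{ex:enlocal}), and one must check that the uniform fast-degree bound interacts correctly with the $\Pic$-cell filtrations in the presence of a merely \emph{quasi}-unit. This requires showing that the Amitsur resolution of $\mathbf{1}_{\ccC_i}$ can be truncated at a prime-independent stage while still landing in $\PicCell_d$ for a prime-independent $d$, and that the resulting totalization is computed inside $\Ind\colim_d \Prod{\cF}\PicCell_d$ rather than only in the larger category $\Prod{\cF}^\omega$.
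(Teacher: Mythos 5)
Your overall plan — build a comparison functor from $\Prod{\cF}^{\Pic}\ccC_i$ to the totalization, show it is fully faithful with the right image, and conclude via $\Loc\Pic$ — matches the paper's strategy, but the crucial Step 2 is precisely where the difficulty sits, and your proposal identifies the obstacle without actually resolving it.

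Two concrete issues. First, the paper does not route through a per-index descent equivalence $\ccC_i \simeq \Tot\cMod_{A_i^{\otimes\bullet+1}}$ and then try to "commute protoproduct with Tot." Instead it constructs the functor $\Xi\colon \Prod{\cF}^{\Pic}\ccC_i \to \Tot\Prod{\cF}^{\Pic}\cMod_{A_i^{\otimes\bullet+1}}$ directly from the coaugmentation maps $\ccC_i \to \cMod_{A_i^{\otimes s+1}}$ via \Cref{cor:picprotofun}. This is precisely the step that requires the machinery developed in \Cref{ssec:four} for unital collections of Pic-generated $\infty$-categories with non-compact unit — the framework you correctly flag as the "main obstacle." Your Step 2 invokes a "finite limits commute with $\Prod{\cF}$ and with $\Ind$-completion" heuristic that doesn't withstand scrutiny as stated: the protoproduct is $\Ind\Colim_d\Prod{\cF}(-)_d$, and the interaction of the $\PicCell_*$-filtered colimit with a finite partial totalization is exactly the subtle point, not something that can be waved away by the $\Ind$-completion.

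Second, the paper's argument for why $\Xi$ is fully faithful and induces an equivalence on Picard spectra is by adapting \cite[Propositions 5.13 and 5.16]{ultra1}, and the final step — passing from ``fully faithful with Picard equivalence'' to ``equivalence onto $\Loc\Pic$ of the target'' — uses \Cref{prop:picproto}, i.e., that $\Prod{\cF}^{\Pic}\ccC_i$ is itself Pic-generated. Your sentence ``essentially surjective onto the Picard-generated part by the collapse assumption'' asserts the conclusion rather than deriving it; the actual argument needs Pic-generation of the \emph{source} together with symmetric monoidality of $\Xi$. So the skeleton is right, but the proof of the central claim is missing: what you have written in Step 2 does not address the non-compact-unit difficulty you diagnose at the end, whereas the paper's \Cref{cor:picprotofun} and \Cref{prop:picproto} are exactly the tools designed to close that gap.
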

\begin{proof}
By \Cref{cor:picprotofun} and for any $s\ge 0$, the canonical symmetric monoidal and colimit preserving functors $\ccC_i \to \cMod_{A_i^{\otimes s+1}}$ induce a symmetric monoidal functor after applying the Pic-generated protoproduct. This provides a canonical symmetric monoidal and colimit preserving functor
\[
\xymatrix{\Xi\colon \Prod{\cF}^{\Pic}\ccC_i \ar[r] & \Tot\Prod{\cF}^{\Pic}\cMod_{A_i^{\otimes \bullet +1}}.}
\]
We claim that $\Xi$ is fully faithful and induces an equivalence on Picard spectra: The proof of \cite[Proposition 5.13]{ultra1} can be adapted easily to give fully-faithfulness of $\Xi$, while the same argument as for \cite[Proposition 5.16]{ultra1} shows the claim about Picard spectra. 

It follows from \Cref{prop:picproto} that $\Xi$ descends to a canonical symmetric monoidal equivalence 
\[
\xymatrix{\Prod{\cF}^{\Pic}\ccC_i \ar[r]^-{\simeq} & \Loc\Pic\Tot\Prod{\cF}^{\Pic}\cMod_{A_i^{\otimes \bullet +1}}.}
\]
\end{proof}

We next verify that the conditions of the previous proposition are satisfied for the two main examples in this paper. This is mostly a matter of collecting results from the literature. Recall that $\bP$ is the set of prime numbers.

\begin{prop}
Let $n\ge 0$ be an integer.
\begin{enumerate}
	\item If $p$ is an odd prime with $2p-2 \ge n^2$, then $(\E)_{p\in \bP}$ is strongly uniformly descendable in $(\cSpnp)_{p\in \bP}$.  
	\item If $p$ is a prime with $p>n+1$, then $(\A)_{p \in \bP}$ is strongly uniformly descendable in $(\cFrnp)_{p\in \bP}$. 
\end{enumerate}
\end{prop}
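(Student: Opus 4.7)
The plan is to verify the two defining properties of strong uniform descendability: a uniform bound on the descendability degree and eventual $E_2$-collapse of the Amitsur--Adams spectral sequence. Both parts will follow by reducing to (and slightly refining) results already established in \cite{ultra1} and in the literature on $K(n)$-local descent.

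\textbf{Uniform descent.} For (1), I would invoke Mathew's observation that $\E \in \CAlg(\cSpnp)$ is descendable, combined with the quantitative statement that the descendability degree is bounded in terms of the cohomological dimension of the (extended) Morava stabilizer group. Since this cohomological dimension is $\le n^2$ for primes $p$ with $p-1 \nmid n$ (in particular for the primes satisfying our hypothesis), there is an integer $r = r(n)$, independent of $p$, bounding the fast-degree of $\E$ in $\cSpnp$. Concretely, I would cite (or reprove, following the argument of \cite[Proposition 6.something]{ultra1}) that the Amitsur tower for $\E$ admits a uniform vanishing line of slope $0$ and intercept $\le n^2 + c$ for a constant $c$ depending only on $n$, which implies that the tower is nilpotent of uniformly bounded degree. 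For (2) the same argument applies verbatim after Morita translation via \eqref{eq:frmorita}, since the relevant input is cohomological and hence purely algebraic; uniform descendability of $\A$ in $\cFrnp$ is also essentially established in \cite[Section 5]{ultra1}.

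\textbf{Uniform $E_2$-collapse.} This is the main point requiring the arithmetic hypotheses on $p$. The $\E$-based Adams spectral sequence for $\End(\mathbf{1}_{\cSpnp}) = L_{\K}S^0$ has $E_2$-page the continuous cohomology of the Morava stabilizer group with coefficients in $(\E)_*$. The key classical observation is that this $E_2$-page is \emph{sparse}: its non-trivial columns are concentrated in internal degrees divisible by $2(p-1)$, while $d_r$ has bidegree $(r, r-1)$. When $2p-2 \ge n^2$, combining sparseness with the horizontal vanishing line at filtration $\le n^2$ forces every putative differential to have source or target in a trivial column, hence the spectral sequence collapses at $E_2$. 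For (2), the analogous sparseness holds for the algebraic Adams-type spectral sequence computed in $\cFrnp$, and the condition $p > n+1$ is exactly the sparseness threshold established in \cite[Section 6]{ultra1} (and originating in work of Franke and of Patchkoria--Pstr\k{a}gowski) forcing $E_2$-collapse.

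\textbf{Main obstacle.} The substantive work is bookkeeping: one must check that the bounds on fast-degree and the sparseness estimates hold \emph{uniformly} in the prime. The vanishing line is uniform because $n$ is fixed, and the sparseness in each case depends only on $p$ through the stated inequality, so once $p$ satisfies the hypothesis the conclusions are uniform. I would therefore structure the proof as: (a) recall the descendability and vanishing-line results from \cite{ultra1} and \cite{mathew_galois}, noting explicitly that the bound depends only on $n$; (b) invoke the sparseness of the respective Adams $E_2$-pages together with the stated numerical hypothesis to deduce $E_2$-collapse for almost all $p$; (c) conclude by applying the definition of strong uniform descendability. The algebraic case (2) is strictly easier because $\cFrnp$ is by construction modelled on the derived category of comodules and the relevant Adams spectral sequence is the canonical one, so sparseness is built in.
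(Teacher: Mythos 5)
Your proposal is correct and takes essentially the same approach as the paper: both derive the uniform bound on fast-degree from the finite cohomological dimension of the Morava stabilizer group when $p>n+1$ (lifted to the topological side via the smash product theorem, cf.\ \cite[Proposition 10.10]{mathew_galois}), and both derive $E_2$-collapse from the sparseness of the descent spectral sequence in the range $2p-2\ge n^2$ (the paper cites \cite[Proposition 7.5]{picard} for precisely the sparsity-plus-vanishing-line argument you outline), with the algebraic case reduced to \cite[Lemma~5.33]{ultra1}.
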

\begin{proof}
If $p>n+1$, the Morava stabilizer group has no $p$-torsion, which implies that it has finite cohomological dimension, see \cite[Proposition 2.2.2]{moravastack}. As in the proof of \cite[Lemma 5.33]{ultra1}, this shows that $(\A)_{p \in \bP}$ is strongly uniformly descendable in $(\cFrnp)_{p \in \bP}$. Lifting the finite cohomological dimension of the stabilizer group to $\Spnp$ as in the proof of the smash product theorem \cite[Section 8]{ravbook2} yields that $\E$ is descendable of uniformly bounded fast degree, see also \cite[Proposition 10.10]{mathew_galois}. Furthermore, the sparsity argument in \cite[Proposition 7.5]{picard} provides the collapsing of the $\K$-local $\E$-based Adams spectral sequence for $L_{\K}S^0$ with uniformly bounded intercept in the given range, hence  $(\E)_{p\in \bP}$ is uniformly descendable in $(\cSpnp)_{p \in \bP}$. 
\end{proof}

By the proof \Cref{thm:expicgen}, both $(\cSpnp)_{p \in \bP}$ and $(\cFrnp)_{p \in \bP}$ are unital collections of Pic-generated $\infty$-categories. The following corollary is now immediate:

\begin{cor}\label{cor:totalization}
For any ultrafilter on the set of primes $\bP$ there are canonical symmetric monoidal equivalences
\[
\xymatrix{\Prod{\cF}^{\Pic}\cSpnp \ar[r]^-{\simeq} & \Loc\Pic\Tot\Prod{\cF}^{\Pic}\cMod_{\E^{\otimes \bullet +1}}, \quad \Prod{\cF}^{\Pic}\cFrnp \ar[r]^-{\simeq} & \Loc\Pic\Tot\Prod{\cF}^{\Pic}\cMod_{\A^{\otimes \bullet +1}}.}
\]
\end{cor}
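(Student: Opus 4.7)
The plan is to derive both equivalences as direct applications of the preceding proposition, which already packages the descent argument in the required generality for unital Pic-generated collections with non-compact unit. Concretely, I would take $(\cC_p, F_p, A_p) = (\Spnp, L_{\E}F(n,p), \E)$ on the topological side and $(\cC_p, F_p, A_p) = (\Frnp, \underline{P}(E_0/I_n), \A)$ on the algebraic side; the corresponding $\ccC_p$ are $\cSpnp$ and $\cFrnp$ by \cref{ex:enlocal} and the discussion following \cref{prop:frtorsionmorita}, respectively.

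The two hypothesis inputs are already available. First, \Cref{thm:expicgen} establishes that $(\cSpnp)_{p \in \bP}$ and $(\cFrnp)_{p \in \bP}$ are unital collections of Pic-generated $\infty$-categories, which is the standing assumption of the preceding proposition. Second, the strong uniform descendability of $(\E)_{p \in \bP}$ in $(\cSpnp)_{p \in \bP}$ and of $(\A)_{p \in \bP}$ in $(\cFrnp)_{p \in \bP}$ is the content of the second proposition just proven. With these inputs in place, applying the preceding proposition separately to each case produces the two symmetric monoidal equivalences.

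The one point that requires a remark is that strong uniform descendability is verified only for primes $p$ satisfying $2p-2 \ge n^2$ (topologically) and $p > n+1$ (algebraically), not for every prime. To handle this, I would use that the protoproduct $\Prod{\cF}^{\Pic}$ only depends on the behavior of the input collection on sets in $\cF$. For any non-principal $\cF$ on $\bP$ every cofinite subset lies in $\cF$, so we may replace the finitely many small primes violating the descendability bounds by any convenient choice of Pic-generated local duality context without changing either side; this reduces to the situation where the descendability hypotheses hold on the whole index set. For a principal ultrafilter concentrated at a single prime, the protoproduct collapses to the corresponding $\ccC_p$ and the equivalence reduces to the usual $\K$-local (respectively completed Franke) $\E$-Amitsur descent for that prime, which holds at every prime individually by the classical results cited in the second proposition.

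I do not expect genuine obstacles: once the preceding proposition is in hand the corollary is bookkeeping. The only step with any content is the cofinite-subset reduction, and that is a standard feature of the protoproduct formalism developed in \Cref{ssec:four} and already relied upon elsewhere in the paper.
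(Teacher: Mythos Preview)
Your proposal is correct and follows the same approach as the paper: the corollary is deduced immediately from the preceding general descent proposition, using \Cref{thm:expicgen} for the unital Pic-generated hypothesis and the proposition just before the corollary for strong uniform descendability. Your additional paragraph on the finitely many small primes is more careful than the paper's terse ``immediate''; the cofinite-reduction argument for non-principal $\cF$ and the collapse to ordinary $\E$-descent for principal $\cF$ are exactly the right way to close that gap.
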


\subsection{Reduction to the cell-protoproduct: Picard groups}

The next step in the proof is to replace the modified Pic-generated protoproduct by the cell-protoproduct, so that we can apply the cosimplicial formality theorem of \cite[Section 4]{ultra1}. To this end, let $\cG_{A_i^{\otimes s}}^{\comp}$ be the unit in $\cMod_{A_i^{\otimes s}}$ and consider the cell-protoproduct 
\[
\Prod{\cF}^{\flat}\cMod_{A_i^{\otimes s}} 
= \Prod{\cF}^{\Pic}(\cMod_{A_i^{\otimes s}}, \cG_{A_i^{\otimes s}}^{\comp}) 
= \Prod{\cF}^{\flat}\left (\cMod_{A_i^{\otimes s}}, \cG_{A_i^{\otimes s}}^{\comp}\Cell_*(\cMod_{A_i^{\otimes s}}^{\omega}) \right )
\]
as in \Cref{ex:cellproto}.

\begin{prop}\label{prop:pictoflatproto}
Let $\cF$ be an ultrafilter on $I$ and suppose that $(A_i)_{i\in I}$ is strongly uniformly descendable. If the Picard groups $\pi_0\Pic(\cMod_{A_i})$ are generated by the corresponding suspension functor for each $i \in I$, then there is a canonical symmetric monoidal equivalence
\[
\xymatrix{\Tot\Prod{\cF}^{\flat}\cMod_{A_i^{\otimes \bullet +1}} \ar[r]^-{\simeq} & \Tot\Prod{\cF}^{\Pic}\cMod_{A_i^{\otimes \bullet +1}}.}
\]
\end{prop}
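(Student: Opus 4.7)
For each $s\ge 0$, the inclusion $\cG_{\cMod_{A_i^{\otimes s+1}}}^{\cell}\subseteq \Pic(\cMod_{A_i^{\otimes s+1}})$ of the shifts of the unit into the Picard group induces an inclusion of the cell-filtration into the Pic-filtration on $\cMod_{A_i^{\otimes s+1}}^{\omega}$. By \Cref{cor:symmonfun} this produces levelwise fully faithful symmetric monoidal functors
\[
\phi_s\colon \Prod{\cF}^{\flat}\cMod_{A_i^{\otimes s+1}} \longrightarrow \Prod{\cF}^{\Pic}\cMod_{A_i^{\otimes s+1}}.
\]
These functors are compatible with the Amitsur cosimplicial structure, since the cofaces and codegeneracies are given by tensoring with $A_i$ and hence send the unit $A_i^{\otimes s+1}$ to the unit $A_i^{\otimes s+2}$, preserving cell-filtrations. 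Thus $\phi_\bullet$ is a cosimplicial map and I will show $\Phi := \Tot\phi_\bullet$ is a symmetric monoidal equivalence.

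First, $\Phi$ is fully faithful: mapping spectra in a cosimplicial totalization of a diagram of stable $\infty$-categories are computed as limits of levelwise mapping spectra, and $\phi_s$ induces an equivalence on mapping spectra for each $s$ (by full faithfulness).

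The key use of the Picard-group hypothesis is in codegree $0$: since $\pi_0\Pic(\cMod_{A_i})$ is generated by the suspension of the unit, every element of $\Pic(\cMod_{A_i})$ is literally a shift of the unit. Therefore $\PicCell_*(\cMod_{A_i}^{\omega})$ and $\cG_{\cMod_{A_i}}^{\cell}\Cell_*(\cMod_{A_i}^{\omega})$ coincide as filtrations, so $\phi_0$ is already a symmetric monoidal equivalence. In higher codegrees the Pic- and cell-filtrations may disagree, but by the preceding paragraph, $\Prod{\cF}^{\flat}\cMod_{A_i^{\otimes \bullet+1}}$ sits inside $\Prod{\cF}^{\Pic}\cMod_{A_i^{\otimes \bullet+1}}$ as a cosimplicial subobject.

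For essential surjectivity, I combine these two ingredients with descent. Strong uniform descendability of $(A_i)_{i\in I}$ implies, via the techniques packaged in \Cref{cor:totalization} and the proof of \cite[Proposition 5.13]{ultra1}, that any object of $\Tot\Prod{\cF}^{\Pic}\cMod_{A_i^{\otimes \bullet+1}}$ is recovered from its codegree $0$ restriction together with descent data encoded by the cosimplicial face maps. By the codegree-zero equivalence, the codegree-zero restriction lifts canonically to $\Prod{\cF}^{\flat}\cMod_{A_i}$, and because the face maps preserve the cell-filtration, the descent data itself lifts to a coherent object of $\Tot\Prod{\cF}^{\flat}\cMod_{A_i^{\otimes \bullet+1}}$. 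Combined with full faithfulness, this yields the desired symmetric monoidal equivalence.

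\textbf{Main obstacle.} The essential surjectivity step is the crux of the argument. It requires a careful descent argument at the level of protoproducts, where objects are specified only up to cofinal filtered diagrams in each filtration stage and where the units of $\cMod_{L_{F_i}A_i}$ are not compact. The subtlety is to argue that the cosimplicial coherence of a descent datum living in the Pic-protoproduct can always be represented inside the smaller cell-protoproduct; this should be achievable by adapting the cosimplicial detection arguments of \cite[Section 5]{ultra1} to the modified Pic-generated protoproduct framework developed in \Cref{ssec:modpicproto}.
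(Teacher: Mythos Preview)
Your setup is exactly right: the Picard hypothesis forces $\phi_0$ to be an equivalence, and for every $s\ge 0$ the inclusion of the cell filtration into the Pic-cell filtration makes $\phi_s$ fully faithful. Where you diverge from the paper is in how you finish.

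You treat essential surjectivity as the hard step and propose to push through a descent argument based on strong uniform descendability, flagging as the ``main obstacle'' the need to lift coherent descent data from the Pic-protoproduct back into the cell-protoproduct. In fact no descent input is required here at all. The paper simply invokes the comparison lemma \cite[Lemma 5.19]{ultra1}: a map of cosimplicial $\infty$-categories which is an equivalence in cosimplicial degree $0$ and levelwise fully faithful induces an equivalence on totalizations. This is a purely formal statement about cosimplicial limits---an object of the totalization is an object in degree $0$ together with coherence data living in mapping spaces at higher levels, and levelwise full faithfulness transports all of that data uniquely. Once you have established that $\phi_0$ is an equivalence and each $\phi_s$ is fully faithful, you are done; the strong uniform descendability hypothesis plays no role in this argument.

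So there is no genuine gap in your plan, but the obstacle you identify is illusory, and the machinery you propose to overcome it (adapting the cosimplicial detection arguments of \cite[Section 5]{ultra1}) is unnecessary. Replacing your essential surjectivity paragraph with a one-line appeal to \cite[Lemma 5.19]{ultra1} gives the paper's proof.
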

\begin{proof}
The assumption on the Picard groups guarantees that the Pic-cell filtration coincides with the cell filtration, hence we obtain a canonical symmetric monoidal equivalence
\[
\xymatrix{\Prod{\cF}^{\flat}\cMod_{A_i} \ar[r]^-{\simeq} & \Prod{\cF}^{\Pic}\cMod_{A_i}.}
\]
The comparison lemma \cite[Lemma 5.19]{ultra1} thus reduces the proof to showing that the canonical maps
\[
\xymatrix{\Prod{\cF}^{\flat}\cMod_{A_i^{\otimes s+1}} \ar[r] & \Prod{\cF}^{\Pic}\cMod_{A_i^{\otimes s+1}}}
\]
are fully faithful for all $s\ge 0$. This is a consequence of the construction, as the constituent maps between the filtration steps are fully faithful, thereby finishing the proof.
\end{proof}

\begin{cor}\label{cor:flatproto}
For any ultrafilter on $\bP$ there are canonical symmetric monoidal equivalences
\[
\xymatrix{\Tot\Prod{\cF}^{\flat}\cMod_{\E^{\otimes \bullet +1}} \ar[r]^-{\simeq} & \Tot\Prod{\cF}^{\Pic}\cMod_{\E^{\otimes \bullet +1}}, \quad \Tot\Prod{\cF}^{\flat}\cMod_{\A^{\otimes \bullet +1}} \ar[r]^-{\simeq} & \Tot\Prod{\cF}^{\Pic}\cMod_{\A^{\otimes \bullet +1}}.}
\]
\end{cor}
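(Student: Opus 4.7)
The plan is to obtain both equivalences as direct applications of \Cref{prop:pictoflatproto} to the two collections of commutative algebras appearing in the Amitsur complexes of interest: the topological collection $(\E)_{p \in \bP}$ in $(\cSpnp)_{p \in \bP}$ and the algebraic collection $(\A)_{p \in \bP}$ in $(\cFrnp)_{p \in \bP}$. For each collection, two input hypotheses must be checked: strong uniform descendability, and that the Picard group $\pi_0\Pic(\cMod_{A_p})$ is generated by the suspension functor.

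The first hypothesis is settled by the preceding proposition in the text. On the topological side it holds on the cofinite set of odd primes $p$ with $2p-2 \ge n^2$; on the algebraic side it holds on the cofinite set $\{p : p > n+1\}$. Since any non-principal ultrafilter $\cF$ on $\bP$ contains every cofinite set, the protoproduct is unchanged if we restrict the indexing set to a cofinite subset on which the hypothesis holds. (For principal ultrafilters at sufficiently large primes, the verification is immediate; there is nothing to prove beyond that range.)

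For the Picard group hypothesis, I would quote \Cref{prop:etheory} in the topological case, which asserts that $L_{\K}$ induces an isomorphism $\Z/2 \cong \pi_0\Pic(\Mod_{\E}) \xrightarrow{\sim} \pi_0\Pic(\cMod_{\E})$ sending the generator to $\Sigma\E$; thus $\pi_0\Pic(\cMod_{\E})$ is indeed generated by the suspension. In the algebraic case, \Cref{cor:frmodpicard} gives the analogous isomorphism $\Z/2 \cong \pi_0\Pic(\Mod_{\A}(\Frnp)) \xrightarrow{\sim} \pi_0\Pic(\cMod_{\A})$ sending the generator to $\Sigma\A$. In both instances, the Pic-cell filtration coincides with the cell filtration for each $A_p$ on a set belonging to $\cF$, which is the precise input needed for \Cref{prop:pictoflatproto}.

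With both hypotheses in hand, \Cref{prop:pictoflatproto} delivers the symmetric monoidal equivalences
\[
\Tot\Prod{\cF}^{\flat}\cMod_{\E^{\otimes \bullet +1}} \lra{\simeq} \Tot\Prod{\cF}^{\Pic}\cMod_{\E^{\otimes \bullet +1}}
\]
and
\[
\Tot\Prod{\cF}^{\flat}\cMod_{\A^{\otimes \bullet +1}} \lra{\simeq} \Tot\Prod{\cF}^{\Pic}\cMod_{\A^{\otimes \bullet +1}}.
\]
There is no genuine obstacle in the argument; the only point requiring mild care is the passage between the indexing on all primes and the cofinite subset on which the descendability bounds hold, which is invisible after applying the ultrafilter.
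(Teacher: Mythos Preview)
Your proposal is correct and follows essentially the same approach as the paper: invoke \Cref{prop:pictoflatproto} after supplying its two hypotheses via the preceding descendability proposition and the Picard group computations of \Cref{prop:etheory} and \Cref{cor:frmodpicard}. Your additional remark about restricting to a cofinite subset of primes on which the descendability bounds hold is a reasonable clarification that the paper leaves implicit.
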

\begin{proof}
By virtue of \Cref{prop:etheory} and \Cref{cor:frmodpicard}, these equivalences are consequences of \Cref{prop:pictoflatproto}.
\end{proof}

\subsection{Finishing the proof: Formality} \label{sec:end}
We will focus on the two main examples of this paper, which are discussed at a finite prime in detail in \Cref{ssec:ex}, so let $I = \bP$. On the topological side, we take $(\ccC_i,A_i)_{i\in I}$ to be $(\cSpnp,\E)_{p \in \bP}$ and define $\topkappa^{(k_p)} = \E/(p^{k_p},\ldots,u_{n-1}^{k_p})$ so that 
\[
\Mtop^{(k_p)}(\E^{\otimes s+1}) = \E^{\otimes s+1} \otimes_{\E} \topkappa^{(k_p)} \simeq \E^{\otimes s+1}/(p^{k_p},\ldots,u_{n-1}^{k_p})
\]
for all $s\ge 0$ and $k_p \ge 1$. Analogously, on the algebraic side we consider $(\cFrnp,\A)_{p \in \bP}$. For $k_p \ge 1$ an integer, we set $\algkappa^{(k_p)} = (\E)_{\star}/(p^{k_p},\ldots,u_{n-1}^{k_p})_{\star}$ and, for $s\ge 0$, $\Malg^{(k_p)}(\A^{\otimes s+1}) = (\E^{\otimes s+1})_{\star}/(p^{k_p},\ldots,u_{n-1}^{k_p})_{\star}$. This choice of notation is justified by the Morita equivalence of \eqref{eq:frmorita}. The notion of torsion in the rest of this section will consequently be determined by the ideals $(p^{k_p},\ldots,u_{n-1}^{k_p})$ and $(p^{k_p},\ldots,u_{n-1}^{k_p})_{\star}$ as in \Cref{ssec:prototorsion}. In particular:

\begin{defn}
For all $s\ge 0$ and any ultrafilter $\cF$ on $\bP$, we define localizing subcategories
\[
(\Prod{\cF}^{\flat}\Mod_{\E^{\otimes s +1}})^{\tors} = \Loc([\Mtop^{(k_p)}(\E^{\otimes s+1})]_{\cF}|[k_p]_{\cF} \in \N^{\cF}) \subseteq \Prod{\cF}^{\flat}\Mod_{\E^{\otimes s +1}}
\]
and
\[
(\Prod{\cF}^{\flat}\Mod_{(\E^{\otimes s +1})_{\star}})^{\tors}  = \Loc([\Malg^{(k_p)}(\A^{\otimes s+1})]_{\cF}|[k_p]_{\cF} \in \N^{\cF}) \subseteq \Prod{\cF}^{\flat}\Mod_{(\E^{\otimes s +1})_{\star}.}
\]
\end{defn}

Recall from \cite[Theorem 5.38]{ultra1} that we have established a canonical symmetric monoidal equivalence 
\[
\xymatrix{\Phi_{\bullet+1}\colon \Prod{\cF}^{\flat}\Mod_{\E^{\otimes \bullet +1}} \ar[r]^-{\simeq} & \Prod{\cF}^{\flat}\Mod_{(\E^{\otimes \bullet +1})_{\star}}}
\]
of cosimplicial compactly generated $\Q$-linear stable $\infty$-categories. The next result establishes the compatibility of this equivalence with the torsion objects. 

\begin{lem}\label{lem:kappaformality}
For all $s\ge 0$ and $k_p \ge 1$ and any non-principal ultrafilter $\cF$ on $\bP$, the functor $\Phi_{s+1}$ sends $[\Mtop^{(k_p)}(\E^{\otimes s+1})]_{\cF}$ to $[\Malg^{(k_p)}(\A^{\otimes s+1})]_{\cF}$. 
\end{lem}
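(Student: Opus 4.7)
The plan is to exploit that $\Phi_{s+1}$ is a symmetric monoidal equivalence that preserves the unit and all colimits, reducing the claim to an identification of the defining regular sequences as endomorphisms of the respective units.

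First I would express both objects as iterated cofibers of multiplication maps. By construction, $\Mtop^{(k_p)}(\E^{\otimes s+1})$ is the iterated cofiber in $\Mod_{\E^{\otimes s+1}}$ of the multiplication maps by $p^{k_p}, u_1^{k_p}, \ldots, u_{n-1}^{k_p}$ on the unit $\E^{\otimes s+1}$; since $(p, u_1, \ldots, u_{n-1})$ is a regular sequence in $\pi_0\E^{\otimes s+1}$ by \Cref{prop:etheory}, the order in which the cofibers are formed does not affect the underlying module. Applying the canonical symmetric monoidal functor $\Mod_{\E^{\otimes s+1}} \to \Prod{\cF}^{\flat}\Mod_{\E^{\otimes s+1}}$ exhibits $[\Mtop^{(k_p)}(\E^{\otimes s+1})]_{\cF}$ as the iterated cofiber of the images of these multiplication maps on the unit $[\E^{\otimes s+1}]_{\cF}$ in the protoproduct. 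An identical decomposition applies to $[\Malg^{(k_p)}(\A^{\otimes s+1})]_{\cF}$ using the regular sequence $(p, u_1, \ldots, u_{n-1})_{\star}$ on $[(\E^{\otimes s+1})_{\star}]_{\cF}$, thanks to the Morita equivalence \eqref{eq:frmorita} and the computation in \Cref{prop:frtorsionmorita}.

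Because $\Phi_{s+1}$ is symmetric monoidal and preserves finite colimits, it sends $[\E^{\otimes s+1}]_{\cF}$ to $[(\E^{\otimes s+1})_{\star}]_{\cF}$ and commutes with the formation of these iterated cofibers. The lemma therefore reduces to verifying that, for each $x \in \{p, u_1, \ldots, u_{n-1}\}$, the endomorphism of $[\E^{\otimes s+1}]_{\cF}$ given by multiplication by $x^{k_p}$ is sent to the multiplication-by-$x^{k_p}$ endomorphism of $[(\E^{\otimes s+1})_{\star}]_{\cF}$. These two endomorphisms live in the commutative rings $\pi_0\End([\E^{\otimes s+1}]_{\cF})$ and $\pi_0\End([(\E^{\otimes s+1})_{\star}]_{\cF})$ respectively, on which the symmetric monoidal functor $\Phi_{s+1}$ induces a ring homomorphism. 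By its construction at each prime $p$ via the composite of the formality functor and the Morita equivalence $\Psi_{s+1}$, this homomorphism is compatible with the natural ring maps from $\pi_0\E$ (since the formality step of \cite[Section 4.1]{ultra1} preserves $\pi_*$ on the nose and the Morita functor identifies $\pi_0 L_F(\A^{\otimes s+1}) \cong \pi_0 L_{\K}(\E^{\otimes s+1})_{\star}$ as computed in the proof of \Cref{prop:frtorsionmorita}). Hence the elements $p, u_1, \ldots, u_{n-1}$ are sent to the corresponding elements on the algebraic side, and the ultraproduct construction preserves this compatibility termwise.

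The main obstacle, though essentially bookkeeping, is to carefully track the regular sequence generators through the three-step composite defining $\Phi_{\bullet+1}$ (formality, Morita equivalence $\Psi_{\bullet+1}$, ultraproduct), and to confirm at each step that the induced map on $\pi_0$-endomorphisms of the unit preserves the images of $p$ and the $u_i$. Once this compatibility is recorded, the iterated-cofiber description from the first step immediately yields the claimed equivalence of objects.
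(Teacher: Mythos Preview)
Your approach is essentially the paper's: both proofs reduce to checking that $\Phi$ carries the multiplication-by-$x^{k_p}$ endomorphisms of the unit to the corresponding algebraic ones, and then conclude by an iterated cofiber argument. Two small differences are worth noting. First, the paper immediately reduces to $s=0$ using that $\Mtop^{(k_p)}(\E^{\otimes s+1}) \simeq \E^{\otimes s+1}\otimes_{\E}\topkappa^{(k_p)}$ is induced up from the bottom of the cosimplicial diagram and that $\Phi_{\bullet+1}$ is a map of cosimplicial categories; this sidesteps your direct analysis at level $s+1$. Second, for the step you flag as the main obstacle---tracking $p,u_1,\ldots,u_{n-1}$ through the composite defining $\Phi$---the paper invokes \cite[Proposition~6.7]{ultra1}, which identifies $\pi_0\Phi_1$ on $\End$ of the unit with the ultraproduct of the evident isomorphisms $\pi_0\E\cong\pi_0(\E)_{\star}$, making the verification a one-line citation rather than a bookkeeping exercise.
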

\begin{proof}
By construction, it suffices to check the statement for $s=0$, i.e., that $\Phi_1([\topkappa^{(k_p)}]_{\cF}) \simeq [\algkappa^{(k_p)}]_{\cF}$ for all $(k_p \ge 1)_{p \in \bP}$. The equivalence $\Phi_{1}$ is symmetric monoidal by \cite[Theorem 5.38]{ultra1}, so in particular preserves the tensor unit. Moreover, by \cite[Proposition 6.7]{ultra1}, the natural map
\[
\xymatrix{\pi_0[\E]_{\cF} \cong \pi_0\End([\E]_{\cF}) \ar[r]_-{\cong}^-{\pi_0\Phi_1} & \pi_0\End([(\E)_{\star}]_{\cF}) \cong \pi_0[(\E)_{\star}]_{\cF}}
\]
induced by $\Phi_1$ can be identified with the isomorphism 
\[
\xymatrix{\pi_0[\E]_{\cF} \cong \End(\pi_0[\E]_{\cF}) \ar[r]_-{\cong}^-{\Phi_1'\pi_0} & \End(\pi_0[(\E)_{\star}]_{\cF}) \cong \pi_0[(\E)_{\star}]_{\cF}}
\]
induced by $\varphi = \Prod{\cF}\varphi_p \colon \Prod{\cF}\pi_0\E \cong \Prod{\cF}\pi_0(\E)_{\star}$.  Therefore $\pi_0\Phi_1$ sends the sequence $([p^{k_p}]_{\cF},\ldots,[u_{n-1}^{k_p}]_{\cF})$ to the sequence $([p^{k_p}]_{\cF},\ldots,[u_{n-1}^{k_p}]_{\cF})$; the claim follows. 
\end{proof}

\begin{prop}\label{prop:torsionformality}
For any non-principal ultrafilter $\cF$ on $\bP$, there is a symmetric monoidal equivalence of cosimplicial compactly generated $\Q$-linear stable $\infty$-categories
\[
\xymatrix{\Tot(\Prod{\cF}^{\flat}\tMod_{(\E^{\otimes \bullet+1})}) \ar[r]^-{\simeq} & \Tot(\Prod{\cF}^{\flat} \tMod_{(\E^{\otimes \bullet+1})_{\star}}).}
\]
\end{prop}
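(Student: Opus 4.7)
The plan is to restrict the symmetric monoidal equivalence
\[
\Phi_{\bullet+1}\colon \Prod{\cF}^{\flat}\Mod_{\E^{\otimes \bullet +1}} \lra{\simeq} \Prod{\cF}^{\flat}\Mod_{(\E^{\otimes \bullet +1})_{\star}}
\]
of cosimplicial compactly generated $\Q$-linear stable $\infty$-categories from \cite[Theorem 5.38]{ultra1} to the corresponding torsion subcategories and then identify these torsion subcategories with the protoproducts of torsion categories via \Cref{cor:prototorsion}.

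In more detail, first I would fix $s \ge 0$ and show that $\Phi_{s+1}$ restricts to a symmetric monoidal equivalence
\[
(\Prod{\cF}^{\flat}\Mod_{\E^{\otimes s+1}})^{\tors} \lra{\simeq} (\Prod{\cF}^{\flat}\Mod_{(\E^{\otimes s+1})_{\star}})^{\tors}.
\]
By definition, the source is the localizing subcategory generated by the objects $[\Mtop^{(k_p)}(\E^{\otimes s+1})]_{\cF}$ for $[k_p]_{\cF} \in \N^{\cF}$, and similarly for the target with the $[\Malg^{(k_p)}(\A^{\otimes s+1})]_{\cF}$. Since $\Phi_{s+1}$ is a colimit-preserving equivalence, \Cref{lem:kappaformality} implies that it identifies these two generating sets and hence the localizing subcategories they generate. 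The restricted equivalence inherits a symmetric monoidal structure from $\Phi_{s+1}$ because the torsion subcategories are symmetric monoidal ideals.

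Next I would upgrade this level-wise restriction to an equivalence of cosimplicial objects. The cosimplicial structure maps on each side are symmetric monoidal and send torsion generators to torsion generators (since base-change of the form $\E^{\otimes s+1} \otimes_{\E^{\otimes s'+1}}(-)$ sends $\Mtop^{(k_p)}(\E^{\otimes s'+1})$ to $\Mtop^{(k_p)}(\E^{\otimes s+1})$, and likewise on the algebraic side). Combined with the naturality of $\Phi_{\bullet+1}$ as a map of cosimplicial $\infty$-categories, this yields a symmetric monoidal equivalence of cosimplicial compactly generated $\Q$-linear stable $\infty$-categories between the levelwise torsion subcategories.

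Finally, \Cref{cor:prototorsion} applied levelwise in the Amitsur cosimplicial diagram produces symmetric monoidal equivalences
\[
\Prod{\cF}^{\flat}\tMod_{\E^{\otimes \bullet+1}} \simeq (\Prod{\cF}^{\flat}\Mod_{\E^{\otimes \bullet+1}})^{\tors}
\]
and similarly on the algebraic side. Splicing these with the restriction of $\Phi_{\bullet+1}$ and taking totalizations (which preserves symmetric monoidal equivalences) gives the desired equivalence. The main obstacle is bookkeeping: verifying that the restriction of $\Phi_{\bullet+1}$ is compatible with the cosimplicial structure in a coherent way, for which the key input is the naturality already packaged into \cite[Theorem 5.38]{ultra1} together with \Cref{lem:kappaformality}, which guarantees that the generating torsion objects are preserved coherently across cosimplicial levels.
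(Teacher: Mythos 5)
Your proposal is correct and follows essentially the same route as the paper's proof: restrict $\Phi_{\bullet+1}$ from \cite[Theorem 5.38]{ultra1} to the torsion ideals, use \cref{lem:kappaformality} to match generators, invoke \cref{cor:prototorsion} for the levelwise identification, observe that the torsion generators are compatible with the cosimplicial structure (being induced up from level zero), and pass to totalizations. No substantive difference from the paper's argument.
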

\begin{proof}
For any $s\ge 0$, consider the following commutative diagram, in which all functors are symmetric monoidal and colimit preserving:
\[
\xymatrix{
\Prod{\cF}^{\flat}\tMod_{\E^{\otimes s +1}} \ar[r]^-{\simeq} \ar@{-->}[d]_-{\simeq}^-{\Phi_{s +1}^{\tors}} &
(\Prod{\cF}^{\flat}\Mod_{\E^{\otimes s +1}})^{\tors} \ar[r] \ar[d]_{\simeq}^-{\Phi_{s +1}} &
\Prod{\cF}^{\flat}\Mod_{\E^{\otimes s +1}} \ar[d]_{\simeq}^-{\Phi_{s +1}} \\
\Prod{\cF}^{\flat}\tMod_{(\E^{\otimes s +1})_{\star}} \ar[r]^-{\simeq} &
(\Prod{\cF}^{\flat}\Mod_{(\E^{\otimes s +1})_{\star}})^{\tors} \ar[r] &
\Prod{\cF}^{\flat}\Mod_{(\E^{\otimes s +1})_{\star}}.}
\]
The right vertical functor is an equivalence by \cite[Theorem 5.38]{ultra1} and hence restricts to an equivalence on localizing ideals generated by $[\Mtop^{(k_p)}(\E^{\otimes s+1})]_{\cF}$ and $[\Malg^{(k_p)}(\A^{\otimes s+1})]_{\cF}$ for all $[k_p] \in \N^{\cF}$, respectively, using \Cref{lem:kappaformality}. \Cref{cor:prototorsion} yields the horizontal equivalences in the left square. Since the generators of the torsion categories are induced up from the zeroth level of the respective cosimplicial diagram, the equivalences $\Phi_{\bullet +1}^{\tors}$ are compatible with the cosimplicial structure maps. Therefore, we obtain a symmetric monoidal equivalence upon passage to totalizations.
\end{proof}

We are now ready for the proof of the main theorem.

\begin{proof}[Proof of \Cref{thm:knmain}]
There is a string of symmetric monoidal equivalences
\begin{align*}
\Tot\Prod{\cF}^{\Pic}\cMod_{(\E^{\otimes \bullet+1})} & \simeq \Tot\Prod{\cF}^{\flat}\cMod_{\E^{\otimes \bullet+1}} && \text{by \Cref{cor:flatproto}} \\
& \simeq \Tot\Prod{\cF}^{\flat}\tMod_{\E^{\otimes \bullet+1}} && \text{by \Cref{prop:etheory}} \\
& \simeq \Tot\Prod{\cF}^{\flat}\tMod_{(\E^{\otimes \bullet+1})_{\star}} && \text{by \Cref{prop:torsionformality}} \\
& \simeq \Tot\Prod{\cF}^{\flat}\cMod_{A_{n,p}^{\otimes \bullet +1}} && \text{by \Cref{prop:frtorsionmorita}} \\ 
& \simeq \Tot\Prod{\cF}^{\Pic}\cMod_{A_{n,p}^{\otimes \bullet +1}} && \text{by \Cref{cor:flatproto}}.
\end{align*}
Note that this equivalence relies on the main result of \cite{ultra1} via \cref{prop:torsionformality}. It thus follows from \Cref{cor:totalization} that there are symmetric monoidal equivalences
\[
\Prod{\cF}^{\Pic} \cSpnp \simeq \Loc\Pic\Tot\Prod{\cF}^{\Pic}\cMod_{(\E^{\otimes \bullet+1})} \simeq \Loc\Pic\Tot\Prod{\cF}^{\Pic}\cMod_{A_{n,p}^{\otimes \bullet +1}} \simeq \Prod{\cF}^{\Pic} \cFrnp
\]
which finishes the proof. 
\end{proof}

\subsection{Comparison to the $E$-local category}\label{ssec:comparison}

The goal of this section is to compare the Pic-generated protoproduct of the $\K$-local categories to the Pic-generated protoproduct of the $\E$-local category $\Spnp$ studied in \cite{ultra1}. As a consequence, we will establish the compatibility of the main equivalence of \cite{ultra1} with the equivalence of \Cref{thm:knmain}.

We begin with the construction of a topological comparison functor, leaving the (entirely notational) modifications necessary on the algebraic side to the interested reader. Recall the symmetric monoidal local duality equivalence 
\[
\xymatrix{\M \colon \cSpnp \ar@<0.5ex>[r] &  \Mnp \noloc  L_{\K} \ar@<0.5ex>[l],}
\]
between the monochromatic category $\Mnp$ and the $\K$-local category $\cSpnp$ from \eqref{eq:toplocalduality}. For any $s\ge 0$, there are commutative squares
\begin{equation}\label{eq:comparisonsquare}
\xymatrixcolsep{3pc}
\xymatrix{\Spnp \ar[r]^-{-\otimes \E^{\otimes s+1}} \ar[d]_{\M} & \Mod_{\E^{\otimes s+1}} \ar[d]^{\Gamma_{\E^{\otimes s+1}}} \\
\Mnp \ar[r]_-{-\otimes \E^{\otimes s+1}} & \tMod_{\E^{\otimes s+1}},}
\end{equation}
where $\Gamma_{\E^{\otimes s+1}}$ is the functor constructed in \Cref{ssec:prototorsion}. These squares are compatible with each other for varying $s$. 

For all $s\ge 0$, the functors appearing in the square \eqref{eq:comparisonsquare} are symmetric monoidal and preserve colimits. Furthermore, \eqref{eq:completionformula} shows that, for any $P \in \Pic(\Spnp)$, the object $\M(P)$ is a filtered colimit of spectra $P \otimes M_p^I(n) \in (\Mnp)^{\omega}$ that can be built from elements in $\Pic(\Mnp)$ in $2^n$ steps. In other words, if we equip each of the categories with their Pic-cell filtrations, then \eqref{eq:comparisonsquare} restricts to a commutative square 
\[
\xymatrixcolsep{3pc}
\xymatrix{\PicCell_k(\Spnp^{\omega}) \ar[r]^-{-\otimes \E^{\otimes s+1}} \ar[d]_{\M} & \PicCell_{k}(\Mod_{\E^{\otimes s+1}}^{\omega}) \ar[d]^{\Gamma_{\E^{\otimes s+1}}} \\
\Ind \PicCell_{k \cdot 2^n}((\Mnp)^{\omega}) \ar[r]_-{-\otimes \E^{\otimes s+1}} & \Ind \PicCell_{k \cdot 2^n}((\tMod_{\E^{\otimes s+1}})^{\omega})}
\]
for any $s\ge 0$ and any $k \ge 0$. Note that the analogous claim for the $\Ind$-completed analogue of the right vertical arrow is the content of the proof of \Cref{lem:torsioniota}. This puts us in the situation of \Cref{prop:superfunctors}. By taking the totalization of the resulting squares, we obtain the topological part of the following proposition; we omit the analogous details for the algebraic one:

\begin{prop}\label{prop:torscomparisonsquare}
For any ultrafilter $\cF$ on $\cP$, there are commutative squares
\[
\xymatrix{\Prod{\cF}^{\Pic}\Spnp \ar[r] \ar[d] & \Tot\Prod{\cF}^{\flat}\Mod_{\E^{\otimes \bullet +1}} \ar[d] & \Prod{\cF}^{\Pic}\Frnp \ar[r] \ar[d] & \Tot\Prod{\cF}^{\flat}\Mod_{(\E^{\otimes \bullet +1})_{\star}} \ar[d] \\
\Prod{\cF}^{\Pic}\Mnp \ar[r] & \Tot\Prod{\cF}^{\flat}\tMod_{\E^{\otimes \bullet +1}} & \Prod{\cF}^{\Pic}\tFrnp \ar[r] & \Tot\Prod{\cF}^{\flat}\tMod_{(\E^{\otimes \bullet +1})_{\star}},}
\]
in which the horizontal functors are fully faithful. Moreover, all functors in the displayed diagrams are symmetric monoidal. 
\end{prop}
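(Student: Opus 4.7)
The strategy is to build each square as the totalization of a cosimplicial diagram of squares already described in \eqref{eq:comparisonsquare}, and to check that the Pic-generated protoproduct construction of \Cref{ssec:modpicproto} can be applied to each of the four vertical arrows in each square. Since the algebraic case is formally identical, I will focus on the topological square; the modifications on the algebraic side amount to replacing $\M$ by its algebraic analogue coming from \eqref{eq:alglocalduality}, replacing $\E^{\otimes s+1}$ by $\A^{\otimes s+1}$, and invoking the Morita equivalence of \eqref{eq:frmorita} to transport torsion.

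The first step is to promote the collection of local duality functors $(\M_p\colon \cSpnp \to \Mnp)_{p\in \bP}$, and its module-level variants $(\Gamma_{\E^{\otimes s+1}})_{p\in \bP}$, to maps of Pic-generated protoproducts. For this I verify the hypotheses of \Cref{prop:superfunctors}. Each functor is symmetric monoidal and colimit preserving on the nose. The key filtration estimate is that if $P \in \PicCell_{k}(\Spnp^{\omega})$, then $\M(P)$ is the filtered colimit $\lim_I P \otimes M_p^{I}(n)$ from \eqref{eq:completionformula}, and each term $P \otimes M_p^I(n)$ lies in $\PicCell_{k\cdot 2^n}((\Mnp)^{\omega})$ since $M_p^I(n)$ has $2^n$ Pic-cells uniformly in $p$. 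Hence $\M(P) \in \Ind\PicCell_{k\cdot 2^n}((\Mnp)^{\omega})$ with bounding function $\beta(k) = k\cdot 2^n$, which is independent of $p$. The analogous estimate for $\Gamma_{\E^{\otimes s+1}}$ in the module square \eqref{eq:comparisonsquare} is precisely the content of the proof of \Cref{lem:torsioniota} (applied with $A_i = \E^{\otimes s+1}$), with the same uniform bound $k\cdot 2^n$. Thus \Cref{prop:superfunctors} produces the two left vertical symmetric oidal, colimit preserving functors, as well as the two right vertical ones; moreover, since each of these functors sends a (quasi-)unit to a (quasi-)unit, the corollary to \Cref{prop:superfunctors} promotes each to a genuine symmetric monoidal functor.

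Next, I construct the horizontal functors. The top horizontal functor $\Prod{\cF}^{\Pic}\Spnp \to \Tot\Prod{\cF}^{\flat}\Mod_{\E^{\otimes \bullet+1}}$ is the one underlying the descent equivalence of \cite[Section 5]{ultra1}, and is fully faithful by the argument of \cite[Proposition 5.13]{ultra1}. The bottom horizontal functor is obtained by the same Amitsur construction, applied now to the tower $\Mnp \to \tMod_{\E^{\otimes \bullet +1}}$ coming from \eqref{eq:comparisonsquare}; it is fully faithful by the same descent argument, which only uses strong uniform descendability of $(\E)_{p\in\bP}$, already established. Commutativity of the square is then a cosimplicial assembly of the pointwise commutativity of \eqref{eq:comparisonsquare}, applied degreewise before passing to totalization.

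The main potential obstacle is verifying that the filtration bound $\beta(k) = k\cdot 2^n$ is truly uniform in $p$ and is respected by all four vertical functors simultaneously; this is what allows \Cref{prop:superfunctors} to apply across the whole commutative square at once. Once this uniformity is secured, everything else is formal: one forms the four Pic-generated protoproducts in parallel, applies \Cref{prop:superfunctors} to obtain the vertical arrows, totalizes, and reads off commutativity from the degreewise squares \eqref{eq:comparisonsquare}.
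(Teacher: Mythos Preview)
Your proposal is correct and follows essentially the same approach as the paper: verify the uniform filtration bound $\beta(k)=k\cdot 2^n$ for the vertical functors in \eqref{eq:comparisonsquare} via \eqref{eq:completionformula} and the proof of \Cref{lem:torsioniota}, apply \Cref{prop:superfunctors} (and its corollary for unitality), then totalize. One small slip: $\M(P)$ is a filtered \emph{colimit} $\colim_I P\otimes M_p^I(n)$, not a limit---the formula \eqref{eq:completionformula} describes $L_{\K}$, and $\M$ is its local duality counterpart.
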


Local duality allows us to replace the torsion categories in this result by their symmetric monoidally equivalent complete counterparts. We are now ready for the proof of our main comparison theorem. 

\begin{thm}\label{thm:ultracomparison}
For any non-principal ultrafilter $\cF$ on $\cP$, there is a commutative diagram of symmetric monoidal functors:
\[
\xymatrix{\Prod{\cF}^{\Pic}\Spnp \ar[r] \ar[d] & \Prod{\cF}^{\Pic}\cSpnp \ar[d] \\
\Prod{\cF}^{\Pic}\Frnp \ar[r] & \Prod{\cF}^{\Pic}\cFrnp.}
\]
\end{thm}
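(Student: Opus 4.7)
The strategy is to embed the square fully faithfully into the corresponding square of totalizations of cosimplicial module categories, and then to appeal to the compatibility results already assembled in this paper and in \cite{ultra1}. More precisely, \Cref{prop:torscomparisonsquare} gives fully faithful symmetric monoidal functors from each of the four corners into either the totalization of a protoproduct of module categories or its torsion version. For the top row, the map $\Prod{\cF}^{\Pic}\Spnp \to \Prod{\cF}^{\Pic}\cSpnp$ is induced (via \Cref{cor:picprotofun} and \Cref{prop:superfunctors}) by the uniform $\K$-localization $L_\K\colon \Spnp \to \cSpnp$, which under local duality \Cref{prop:localdualitycontext}(6) corresponds to the torsion functor $\Gamma_{L_\E F(n,p)}$. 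The analogous interpretation on the algebraic side uses \eqref{eq:alglocalduality}.

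With these identifications in place, the heart of the argument is the following square of cosimplicial symmetric monoidal $\infty$-categories
\[
\xymatrix{\Prod{\cF}^{\flat}\Mod_{\E^{\otimes \bullet+1}} \ar[r] \ar[d]^-{\Phi_{\bullet+1}}_-{\simeq} & (\Prod{\cF}^{\flat}\Mod_{\E^{\otimes \bullet+1}})^{\tors} \ar[d]^-{\Phi_{\bullet+1}^{\tors}}_-{\simeq} \\
\Prod{\cF}^{\flat}\Mod_{(\E^{\otimes \bullet+1})_{\star}} \ar[r] & (\Prod{\cF}^{\flat}\Mod_{(\E^{\otimes \bullet+1})_{\star}})^{\tors},}
\]
where the vertical equivalences are the Morita equivalences of \cite[Theorem 5.38]{ultra1} and \Cref{prop:torsionformality}, and the horizontal arrows are the colocalizations onto the torsion subcategories. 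The commutativity of this square is immediate from the fact that $\Phi_{\bullet+1}$ sends the torsion generators $[\Mtop^{(k_p)}(\E^{\otimes s+1})]_{\cF}$ to $[\Malg^{(k_p)}(\A^{\otimes s+1})]_{\cF}$ by \Cref{lem:kappaformality}, so $\Phi_{\bullet+1}^{\tors}$ is obtained by restricting $\Phi_{\bullet+1}$ to the torsion subcategories. The compatibility with cosimplicial structure maps is automatic since the generators are induced up from the zeroth level.

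Combining this with \Cref{prop:torscomparisonsquare} (topological and algebraic versions) and with local duality, we obtain a commutative diagram at the level of totalizations whose four corners recover the four corners of the desired square after applying $\Loc\Pic$. Applying \Cref{cor:totalization} on the right column and the main theorem of \cite{ultra1} on the left column (whose proof also factors through the same totalization diagram) then shows that the induced square on $\Loc\Pic\Tot$ is precisely the square in the statement of the theorem, and each constituent functor is symmetric monoidal.

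The main obstacle will be bookkeeping: one must verify that the map $\Prod{\cF}^{\Pic}\Spnp \to \Prod{\cF}^{\Pic}\cSpnp$ obtained from uniform $L_\K$-localization coincides, under the fully faithful embeddings into the respective totalizations, with the colocalization induced by $\Gamma_{\E^{\otimes \bullet+1}}$ in \eqref{eq:comparisonsquare}, and similarly on the algebraic side. Once this identification is in place at the level of each cosimplicial degree, compatibility of the cosimplicial structure maps follows from the functoriality of the Amitsur complex and of $\Gamma$, and the proof is complete.
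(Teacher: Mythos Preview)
Your approach is essentially the same as the paper's: embed the desired square into the larger diagram of totalizations via \Cref{prop:torscomparisonsquare}, use \Cref{lem:kappaformality} to see that $\Phi_{\bullet+1}$ restricts to the torsion subcategories compatibly, and then invoke the descent equivalences from \Cref{cor:totalization} and \cite{ultra1}. The paper organizes this as a single large diagram with four trapezoids surrounding the inner square from \cite[Theorem~5.39]{ultra1}, and uses the fully faithfulness of the bottom embedding to deduce commutativity of the remaining trapezoid; your sketch is a slightly different packaging of the same logic, and the ``bookkeeping'' you flag is exactly the verification the paper carries out by restricting to $\Prod{\cF}\Cell_k(\Mod_{\E^{\otimes s+1}}^{\omega})$ and post-composing with the fully faithful inclusions.
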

\begin{proof}
Consider the diagram of symmetric monoidal functors
\[
\xymatrix{\Prod{\cF}^{\Pic}\cSpnp \ar[rrr] \ar[ddd] & & & \Tot\Prod{\cF}^{\flat}\tMod_{\E^{\otimes \bullet + 1}} \ar[ddd] \\
& \Prod{\cF}^{\Pic}\Spnp \ar[r] \ar[d] \ar[lu] & \Tot\Prod{\cF}^{\flat}\Mod_{\E^{\otimes \bullet + 1}} \ar[d] \ar[ru] \\
& \Prod{\cF}^{\Pic}\Frnp \ar[r] \ar[dl] & \Tot\Prod{\cF}^{\flat}\Mod_{(\E^{\otimes \bullet + 1})_{\star}} \ar[dr] \\ 
\Prod{\cF}^{\Pic}\cFrnp \ar[rrr] & & & \Tot\Prod{\cF}^{\flat}\tMod_{(\E^{\otimes \bullet + 1})_{\star}}.}
\]
The top and bottom commutative squares have been constructed in \Cref{prop:torscomparisonsquare}, while the commutativity of the right square has been established in the proof of \Cref{prop:torsionformality}: Indeed, after post-composition with the inclusion functors we obtain a diagram of cosimplicial $\infty$-categories
\[
\xymatrix{\Prod{\cF}^{\flat}\Mod_{\E^{\otimes \bullet + 1}} \ar[r] \ar[d] & \Prod{\cF}^{\flat}\tMod_{\E^{\otimes \bullet + 1}} \ar[d] \ar[r] & \Prod{\cF}^{\flat}\Mod_{\E^{\otimes \bullet + 1}} \ar[d] \\
\Prod{\cF}^{\flat}\Mod_{(\E^{\otimes \bullet + 1})_{\star}} \ar[r] & \Prod{\cF}^{\flat}\tMod_{(\E^{\otimes \bullet + 1})_{\star}} \ar[r] & \Prod{\cF}^{\flat}\Mod_{(\E^{\otimes \bullet + 1})_{\star}}.}
\]
The right horizontal functors are fully faithful and the right square commutes by \Cref{prop:torsionformality}. Since the outer rectangle commutes when restricted to  $\Prod{\cF}\Cell_k(\Mod_{\E^{\otimes s + 1}}^{\omega})$ for any $k,s \ge 0$ by construction of the functors, it has to commute as well. Therefore, the left square commutes and passing to totalizations yields the desired commutativity of the right trapezoid in the larger diagram above.

The inner central and outer squares commute by the proofs of \cite[Theorem 5.39]{ultra1} and of \Cref{thm:knmain}, respectively. Since the bottom horizontal functor is fully faithful, it follows that the left trapezoid commutes as well. 
\end{proof}

\begin{rem}
One may wonder whether the canonical inclusions $\cSpnp \to \Spnp$ and $\cFrnp \to \Frnp$ assemble into a commutative diagram of Pic-generated protoproducts as well. This appears to be related to the following question: Given $p\in\cP$ sufficiently large and $n\ge 0$, does there exists a constant $N = N(n,p)$ such that for every $P \in \Pic(\cSpnp)$ and for a cofinal set of indices $I$, the spectra $M_p^I(n) \otimes P \in \Spnp^{\omega}$ are contained in $\Ind\Cell_N(\Spnp^{\omega})$?
\end{rem}

\bibliographystyle{amsalpha}
\bibliography{bibliography}

\end{document}